\DeclareMathAlphabet\mathbfcal{OMS}{cmsy}{b}{n} 
\definecolor{LightCyan}{rgb}{0.88,1,1}
\definecolor{LightGreen}{rgb}{0.56,0.93,0.56}
\definecolor{LightOrange}{rgb}{0.99,0.84,0.64}
\definecolor{LightBlue}{rgb}{0.764, 0.831, 0.996}
\newcommand{\Hcal}{\mathcal{H}}
\newcommand{\Nbb}{\mathbb{N}}
\newcommand\inner[2]{\langle #1, #2 \rangle} 
\newtheorem{theorem}{Theorem}
\newtheorem{proposition}[theorem]{Proposition}
\newtheorem{lemma}[theorem]{Lemma}
\newtheorem{remark}[theorem]{Remark}
\newtheorem{corollary}[theorem]{Corollary}
\newtheorem{definition}[theorem]{Definition}
\title{Tseng's Algorithm with Extrapolation from the Past Endowed with Variable Metrics and Error Terms}
\author{Buris Tongnoi \\
Email: buris.tongnoi@univie.ac.at}
\date{Faculty of Mathematics, University of Vienna, Oskar-Morgenstern-Platz 1,\\
	Vienna 1090, Austria.}
\begin{document}
\maketitle

\textbf{Abstract:} In this paper, we propose a variable metric version of Tseng's algorithm (the forward-backward-forward algorithm: FBF) combined with extrapolation from the past that includes error terms for finding a zero of the sum of a maximally monotone operator and a monotone Lipschitzian operator in Hilbert spaces. This can be seen as the optimistic gradient descent ascent (OGDA) algorithm endowed with variable metrics and error terms. Primal-dual algorithms are also proposed for monotone inclusion problems involving compositions with linear operators. The primal-dual problem occurring in image deblurring demonstrates an application of our theoretical results.

\section{Introduction}
Various problems in real-world applications like signal and image processing [\cite{Bot2015Csetnek}], Positorn Emission Tomography [\cite{Aharon.et.al2001}] and machine learning [\cite{Shai et al.2012}] can be expressed as non-smooth optimization problems and these problems can also be modeled as inclusion problems involving monotone set-valued operators in Hilbert space $\mathcal{H}$ say
\begin{align}
\mbox{find}\quad \bar{x}\in\mathcal{H} \quad\mbox{such that}\quad z\in Fx
\end{align}
where $F:\mathcal{H}\rightarrow 2^{\mathcal{H}}$ is monotone and $z\in\mathcal{H}$; see, e.g. [\cite{AriasCombettes2011,Malitsky2020Tam,Combettes2014Vu,Vu2013,Vu2013Variable}]. In many situations, the operators $F$ can be represented as the sum of two monotone operators, one of which is the composition of a monotone operator with a  linear transformation and its adjoint operator. In such circumstances, it is usually desirable to also solve the associated dual inclusion [\cite{AriasCombettes2011, Combettes2012Pesquet, Vu2013, Bot2015Csetnek}]. Let $A:\mathcal{H}\rightarrow 2^{\mathcal{H}}$. Then, $A$ is \textit{monotone} if
$(\forall (x,u),(y,v)\in Gra A)\; \inner{x-y}{u-v}\geq 0,$
where $Gra A=\{(x,\xi) \;|\; \xi\in A(x) \}$ is the graph of $A$. The monotone operator $A$ is \textit{maximally
monotone (or maximal monotone)} if there exists no monotone operator $B:\mathcal{H}\rightarrow 2^{\mathcal{H}}$ such that $Gra B$ properly contains $GraA$, i.e., for every $(x, u)\in \mathcal{H}\times\mathcal{H}$,
$(x, u) \in GraA \Leftrightarrow (\forall (y, v) \in GraA )\; \inner{x-y}{u-v} \geq 0$. Whenever the operator $A$ satisfies the inequality : $\|Ax-Ay\|\leq\upsilon\|x-y \|, \; \forall x,y\in\mathcal{H}$ for some $\upsilon>0$, it call \textit{$\upsilon$-Lipschitzian} and we also know that if $f:\mathcal{H}\rightarrow (-\infty,\infty]$ belong to the set of proper lower semicontinuous convex functions on $\mathcal{H}$ denoted by $\Gamma_0(\mathcal{H})$, then $\partial f$ is maximally monotone (see [\cite{BC-Book}] Theorem 20.40). The basic (finite sum) problem that we consider in this paper is the following.

\textbf{Problem 1} Let $\mathcal{H}$ be a real Hilbert space, let $m$ be a strictly positive integer, let $z\in\mathcal{H}$, let $A: \mathcal{H}\rightarrow 2^{\mathcal{H}}$ be a maximally monotone operator, let $C:\mathcal{H}\rightarrow\mathcal{H}$ be monotone and $v_0$-Lipschitzian  for some $v_0\in(0,+\infty)$. For every $i\in\{1,\dots,m \}$, let $\mathcal{G}_i$ be a real Hilbert space, let $r_i\in\mathcal{G}_i$, let $B_i:\mathcal{G}_i\rightarrow 2^{\mathcal{G}}$ be a maximally monotone operator, let $L_i :\mathcal{H}\rightarrow\mathcal{G}_i$ be a nonzero bounded linear operator. Suppose that
\begin{align}\label{Mono_Inclusion 1}
z \in ran \left( A + \sum\limits_{i=1}^m L^*_{i} \left( B_i (L_i \cdot - r_i) \right) + C \right)
\end{align}
The problem is to solve the primal inclusion
\begin{align}\label{Mono Inclusion 2}
\mbox{find}\quad \bar{x}\in\mathcal{H}\; \mbox{such that}\; z\in A\bar{x} +\sum\limits_{i=1}^m L_i^* \left( B_i (L_i \bar{x} - r_i) \right) + C\bar{x}
\end{align}
and the dual inclusion
\begin{align}\label{Mono Inclusion 3}
\mbox{find}\quad \bar{v}_1\in \mathcal{G}_1, \dots,\bar{v}_m\in\mathcal{G}_m\; \mbox{such that}\; (\exists x \in \mathcal{H})
\begin{cases}
z - \sum\limits_{i=1}^m L_i^* \bar{v}_i \in Ax + Cx\\
(\exists i \in \{ 1,\dots,m\})\; \bar{v}_i \in B_i(L_ix-r_i)
\end{cases}
\end{align}

By using  properties for any function belongs to $\Gamma_0(\mathcal{H})$ (see Proposition 15.2 and Corollary 16.24 in [\cite{BC-Book}]) and some qualification conditions (for assuring subdifferential calculus), we can show that Problem 1 and the convex minimization problems below are equivalent by letting $A=\partial f$, $B=\partial g_i\;\forall\; i=1,\dots,m$, $C=\nabla h$  where $h$ is a differentiable convex function with Lipschitz continuous gradient. The convex minimization problem is the following:

\textbf{Problem 2} Let $\mathcal{H}$ be a real Hilbert space, let $z\in\mathcal{H}$, let $m$ be a strictly positive integer, let $f\in\Gamma_0(\mathcal{H})$, and let $h: \mathcal{H}\rightarrow\mathbb{R}$ be convex and differentiable with a $v_0$-Lipschitzian gradient for some $v_0\in (0,+\infty)$. For every $i\in \{ 1,\dots,m\}$, let $\mathcal{G}_i$ be a real Hilbert space, let $r_i\in\mathcal{G}_i$ let $g_i\in \Gamma_0(\mathcal{G}_i)$ and suppose that $L_i:\mathcal{H}\rightarrow\mathcal{G}_i$ is a nonzero bounded linear operator. Consider the problem
\begin{align}\label{Primal-Dual 1}
\underset{x\in\mathcal{H}}{minimize}\; f(x)+\sum\limits_{i=1}^{m}g_i(L_ix) + h(x)
,
\end{align}
and the Fenchel-Rockafellar dual problem [\cite{Rockafellar}]:
\begin{align}\label{Primal-Dual 2}
\underset{v_i\in\mathcal{G}_i(\forall i=1,\dots,m)}{minimize}\; (f^*\Box h^*) \left(-\sum\limits_{i=1}^{m} L_i^* v_i\right)+\sum\limits_{i=1}^m  g^*_i(v_i).
\end{align}

The aforementioned problems are so-called \textit{primal-dual problems}. Using the product space approach, primal-dual inclusion problems (\ref{Mono Inclusion 2}) and (\ref{Mono Inclusion 3}) can be written as the finding $\bar{x}\in\mathcal{H}$ with $0\in\mathbf{A}(\bar{x}) +\mathbf{B}(\bar{x})$, where $\mathbf{A}$ is maximally monotone and $\mathbf{B}$ is either cocoercive or monotone and Lipschitz continuous. When $C$ is \textit{cocoercive} (i.e., $\inner{Cx-Cy}{x-y} \geq \beta \|Cx -Cy\|^2\; \forall x, y \in H$ and $\beta >0$), then $\mathbf{B}$ is cocoercive (in a renormed product Hilbert space), which is proposed in Vu's work [\cite{Vu2013}]. His method stems from the \textit{forward-backward (FB) splitting algorithm} 
\begin{align}\label{FB-Algor}
x_{n+1}=(1-\lambda_n)x_n+\lambda_n J_{\gamma A}(x_n-\gamma B x_n)\quad
\forall n\geq 0,
\end{align}
where the \textit{resolvent operator}  $J_A=(Id+A)^{-1} $ 
is nonexpansive, single-valued and the set of fixed points of $J_{A}$ coincides with the set of zeros of $A$. In the case of $A=\partial f$, then $J_{\partial f}(x)=prox_{f}(x) = \underset{y\in\mathcal{H}}{argmin} \{ f(y) + \frac{1}{2}\|y-x \|^2 \},\; \forall x\in\mathcal{H}$ is the \textit{proximal operator}. Meanwhile, in the work [\cite{AriasCombettes2011}] of  Brice$\tilde{\text{n}}$o-Arias and Combettes, $\mathbf{B}$ is monotone and Lipschitzian. Their scheme is based on the \textit{Tseng's algorithm or forward-backward-forward (FBF) algorithm}. It can be expressed in the simple formula as below
\begin{align}\label{Tseng_Algor}
y_n &= J_{\gamma A}(x_n-\gamma Bx_n) \notag\\
x_{n+1} &= y_n+\gamma(Bx_n-By_n).
\end{align}

We note that every cocoercive operator is monotone Lipschitzian, but the converse is not true in general (see [\cite{BC-Book}]). In our work, we investigate Tseng's method and try to improve this algorithm into better ones in the context of its efficiency and generalization.

From Tseng's algorithm in [\cite{Tseng2000}], we can see that the algorithm must compute twice of $B(x_n)$ and $B(y_n)$, which wastes the algorithm process. To alter this issue, Popov [\cite{Popov}] proposed a technique in the extragradient method that only requires a single gradient computation per update. Then we intend to combine this technique with Tseng's algorithm and call it Tseng's algorithm with extrapolation from the past. We obtain a general scheme as (see [\cite{Bohmetal2020}])
\begin{align}
\mbox{Tseng-General}\;
\begin{cases} 
y_n = J_{\gamma A}(x_n-\gamma B(z_n))\\
x_{n+1} =y_n+\gamma(B(z_n)-B(y_n)).
\end{cases}
\end{align}
\begin{enumerate}
\item For $z_n=x_n$ we obtain Tseng's algorithm (\ref{Tseng_Algor}), see [\cite{Tseng2000}]
\item For $z_n=y_{n-1}$ we obtain Tseng's algorithm with extrapolation. This algorithm is nothing else than the scheme Malitsky-Tam [\cite{Malitsky2020Tam}], also known as Optimistic Gradient Descent Ascent (OGDA) metho
d for saddle point problems, with applications in machine learning.
\end{enumerate}

We are interested in developing Tseng's algorithm with extrapolation from the past endowed with variable metrics and error terms. The idea behind our scheme originated from the modified Tseng's method (OGDA) algorithm in [\cite{Malitsky2020Tam}] that the cocoercivity of the single-valued operator is no longer required, and each iteration needs only one forward evaluation rather than two, as is the case in Tseng's method. Moreover, when the resolvent operator cannot compute efficiently, it is allowed to have errors. For example, the classical Tseng's algorithm in [\cite{AriasCombettes2011}], the algorithm will be more flexible if we concede it has error terms. Additionally, some works proposed the use of variable metrics to get more efficient proximal algorithms (see [\cite{Variablemetic_quaisi_Combettes2013Vu, Parente2008Lotito,Burke1999Qian, Burke2000Qian}]), which can apply to the forward-backward splitting algorithm in [\cite{Combettes2014Vu}] and Tseng's algorithm in [\cite{Vu2013Variable}]. Therefore, we round up the modification algorithm's benefits and put them into our scheme shown in the main theorem of this paper.

In this article, we propose the variable metric Tseng's algorithm with extrapolation from the past and error terms shown in section 3. We give some notations and background knowledge on convex analysis and monotone operator theory in section 2. Next, we use our main result to develop a variable metric primal-dual algorithm for solving the type of composite inclusions for Problem 1 and Problem 2, respectively. Moreover, we illustrate the application of our algorithm in image deblurring in section 6.

\section{Preliminaries}

In this section, we will give some background knowledge and tools which are useful for the main results in the section 3. 

Throughout this paper, $\mathcal{H}$, $\mathcal{G}$, $(\mathcal{G}_i)_{1\leq i\leq m}$ are real Hilbert spaces, and $\mathbb{R}$, $\mathbb{N}$ represent a set of real number and a set of natural number, respectively. The scalar product and associated norms are respectively denoted by $\inner{\cdot}{\cdot}$ and $\|\cdot\|$.  Let $\mathcal{G}_1\bigoplus \dots\bigoplus\mathcal{G}_m$ be the direct sum of the Hilbert spaces $(\mathcal{G}_i)_{1\leq i\leq m}$. For every $i\in \{ 1,\dots, m\}$, let $T_i$ be a mapping from $\mathcal{G}_i$ to some set $\mathcal{R}$. Then
\begin{align} \label{Notation 1}
\bigoplus_{i=1}^m T_i : \bigoplus_{i=1}^m \mathcal{G}_i \rightarrow \mathcal{R}: (y_i)_{1\leq i \leq m} \mapsto \sum\limits_{i=1}^{m} T_iy_i. 
\end{align}
We denote the space of bounded linear operators from $\mathcal{H}$ to $\mathcal{G}$ by $\mathcal{B}(\mathcal{H},\mathcal{G})$, the adjoint of $L\in \mathcal{B}(\mathcal{H},\mathcal{G})$ is denoted by $L^*$. We set $\mathcal{B}(\mathcal{H})= \mathcal{B}(\mathcal{H},\mathcal{H})$. The symbols $\rightharpoonup$ and $\rightarrow$ denote, respectively, weak and strong convergence, and $Id$ denotes the identity operator. 
 We set $\mathcal{S}(\mathcal{H})=\{L\in \mathcal{B}(\mathcal{H}) | L=L^* \}$.  The Loewner partial ordering on $\mathcal{S}(\mathcal{H})$ is denoted by
\begin{align}
(\forall U\in \mathcal{S}(\mathcal{H})) (\forall V\in\mathcal{S}(\mathcal{H}))\quad U \succcurlyeq V \Leftrightarrow (\forall x\in\mathcal{H})\quad  \inner{Ux}{x} \geq \inner{Vx}{x}.
\end{align}
Now let $\alpha \in [0,+\infty)$. We set
\begin{align}
\mathcal{P}_{\alpha}(\mathcal{H})= \{ U\in \mathcal{S}(\mathcal{H}) | U \succcurlyeq \alpha Id \},
\end{align}
and we denote by $\sqrt{U}$ the square root of $U\in\mathcal{P}_{\alpha}(\mathcal{H})$. Moreover, for every $U\in\mathcal{P}_{\alpha}(\mathcal{H})$, we define a semi-scalar product and a semi-norm (a scalar product and a norm if $\alpha>0$) by 
\begin{align}
(\forall x\in\mathcal{H}) (\forall y\in\mathcal{H})\quad \inner{x}{y}_{U} = \inner{Ux}{y}\; \mbox{and}\; \|x\|_{U} = \sqrt{\inner{Ux}{x}}.
\end{align}

Let $A:\mathcal{H}\rightarrow 2^{\mathcal{H}}$ be a set-valued operator. The domain of $A$ is $dom A = \{ x\in \mathcal{H} \;|\; Ax \neq \emptyset \}$. The \textit{inverse} of $A$, denoted by $A^{-1}$, is defined through its graph such that $ Gra A^{-1} =\{ (u,x)\in \mathcal{H}\times \mathcal{H} \;|\; (x,u)\in Gra A  \}$. The set of zeros of $A$ is $zer A = \{ x\in\mathcal{H} \;|\; 0\in Ax\}$, and the range of $A$ is $ran A=\{ u\in \mathcal{H} \;|\; (\exists x\in\mathcal{H})\; u\in Ax \}$, and the resolvent of $A$ is 
\begin{align}
J_A = (Id+A)^{-1}.
\end{align}
Moreover, $A$ is \textit{monotone} if 
\begin{align}
(\forall (x,y) \in \mathcal{H}\times \mathcal{H}) (\forall (u,v) \in Ax \times Ay) \quad \inner{x-y}{u-v} \geq 0,
\end{align}
and \textit{maximally monotone} if it is monotone and there exists no monotone operator $B:\mathcal{H}\rightarrow 2^{\mathcal{H}}$ such that $Gra A \subset Gra B$ and $A\neq B$.
The \textit{conjugate} of $f:\mathcal{H} \rightarrow[-\infty,\infty]$ is 
\begin{align}
f^*:\mathcal{H}\rightarrow[-\infty,+\infty] : u \mapsto \sup\limits_{x\in\mathcal{H}} \left( \inner{x}{u} - f(x) \right),
\end{align}
and the \textit{infimal convolution} of $f$, $g: \mathcal{H}\rightarrow (-\infty, +\infty]$ is 
\begin{align}\label{Infimal Convolution}
f\Box g :\mathcal{H} \rightarrow[-\infty,+\infty] : x \mapsto \inf\limits_{y\in\mathcal{H}} (f(y)+g(x-y)).
\end{align}
The class of lower semicontinuous convex functions $f: \mathcal{H}\rightarrow (-\infty,+\infty]$ such that $domf=\{ x\in \mathcal{H} \;|\; f(x)< + \infty \} \neq \emptyset$ is denoted by $\Gamma_0(\mathcal{H})$. If $f\in\Gamma_{0}(\mathcal{H})$, then $f^*\in \Gamma_0(\mathcal{H})$, and the \textit{subdifferential} of $f$ is the maximally monotone operator, which define as
\begin{align}
\partial f : \mathcal{H} \rightarrow 2^{\mathcal{H}} : x \mapsto \{ u\in \mathcal{H} \;|\; (\forall y\in \mathcal{H})\; \inner{y-x}{u} + f(x) \leq f(y) \},
\end{align}
with inverse
\begin{align}\label{(subdriff(f))inverse equals subdrif f_star}
(\partial f)^{-1} = \partial f^*.
\end{align}
The \textit{indicator function} and the \textit{distance function} of $C$ are defined on $\mathcal{H}$ as 
\begin{align}
\iota_C : x\mapsto\begin{cases} 0,  &if x\in C;\\ +\infty, &if x\not\in C \end{cases} \quad \mbox{and} \quad d_C = \iota_C\Box \|\cdot\| : x\mapsto \inf\limits_{y\in C} \|x-y\|,
\end{align}
respectively. The support function of $C$, $\sigma_C : \mathcal{H}\rightarrow [-\infty, \infty] : u \mapsto \sup\inner{C}{u}$, equals to $\iota_{C}^{*}$.

The \textit{proximity operator} of $f\in\Gamma_0(\mathcal{H})$ relative to the metric induced by $U\in \mathcal{P}_{\alpha}(\mathcal{H})$ is [\cite{Hiriart-Urruty1993Lemarechal}, Section XV.4]
\begin{align}\label{prox_variable_matrices}
prox^{U}_{f} : \mathcal{H} \rightarrow \mathcal{H} : x \mapsto \underset{y\in\mathcal{H}}{\arg\min} f(y) + \frac{1}{2}\|x-y\|^2_{U},
\end{align}
and the projector onto a nonempty closed convex subset $C$ of $\mathcal{H}$ relative to the norm $\|\cdot\|_U$ is denoted by $P_C^U$. We have
\begin{align}\label{prox^U_f and P^U_C}
prox_f^U = J_{U^{-1}\partial f} \;\mbox{ and }\; P_C^U=prox^U_{\iota_C},
\end{align}
and we write $prox_f^{Id}=prox_f$.
Finally, $\ell_+$ denotes the set of all sequences in
$[0,+\infty)$ and $\ell^1$ (resp. $\ell^2$) the space of all absolutely (resp. square) summable sequences in $\mathbb{R}$. Therefore $\ell^1_{+}$ means the space of all absolutely summable sequences in $[0,\infty)$.

\begin{definition}[\cite{Variablemetic_quaisi_Combettes2013Vu}]\label{gen. def of quasi-Fejer}
Let $\alpha\in (0,+\infty)$, let $\phi:[0,+\infty)\rightarrow [0,+\infty)$ , let $(W_n)_{n\in\mathbb{N}}$ be a sequence in $\mathcal{P}_{\alpha}(\mathcal{H})$, let $C$ be a nonempty subset of  $\mathcal{H}$, and let $(x_n)_{n\in\mathbb{N}}$ be a sequence in $\mathcal{H}$. Then $(x_n)_{n\in\mathbb{N}}$ is $\phi$-quasi-Fejer monotone with respect to the target set $C$ relative to $(W_n)_{n\in\mathbb{N}}$ if $(\exists (\eta_n)_{n\in\mathbb{N}}\in\ell_{+}^1(\mathbb{N}))(\forall z\in C)(\exists(\epsilon_n)_{n\in\mathbb{N}}\in\ell_{+}^1(\mathbb{N})) (\forall n \in \mathbb{N})$,
\begin{align}\label{eq of quasi-Fejer}
  \phi(\|x_{n+1}-x\|_{W_{n+1}}) \leq (1+\eta_n) \phi(\|x_n-z\|_{W_n})+ \epsilon_n.
\end{align}
\end{definition}

\begin{lemma}[{[\cite{Kato1980}]} Section VI.2.6, {[\cite{Combettes2014Vu}]}, {[\cite{Variablemetic_quaisi_Combettes2013Vu}]}] \label{lem2.1(10)}
Let $\alpha\in(0,+\infty)$, let $\mu\in(0,+\infty)$ and let $A$ and $B$ be operators in $\mathcal{S}(\mathcal{H})$ such that $\mu Id \succcurlyeq A \succcurlyeq B \succcurlyeq \alpha Id$. Then the following hold:
\begin{enumerate}[label=(\roman*)]
\item $\alpha^{-1} Id \succcurlyeq B^{-1} \succcurlyeq A^{-1} \succcurlyeq \mu^{-1} Id$.
\item  $(\forall x\in \mathcal{H}) $ $\inner {A^{-1}x}{x} \geq \|A\|^{-1}\|x\|^2$.
\item $\|A^{-1}\|\leq\alpha^{-1}$.
\end{enumerate}
\end{lemma}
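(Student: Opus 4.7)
The plan is to prove the three items in order, since (i) supplies the Loewner-order comparison and both (ii) and (iii) reduce to it by routine applications of the functional calculus for positive self-adjoint operators. Since $\alpha > 0$, all of $A$, $B$, $A-\alpha Id$, $B-\alpha Id$ are positive self-adjoint operators, so $A$ and $B$ are invertible in $\mathcal{S}(\mathcal{H})$ with self-adjoint inverses, and their square roots $A^{1/2}, B^{1/2}, A^{-1/2}, B^{-1/2}$ are well-defined and self-adjoint.

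For (i), the central step is the order-reversing property of inversion on positive invertible operators. To compare $A^{-1}$ and $B^{-1}$, I would conjugate $A \succcurlyeq B$ by $B^{-1/2}$ to obtain $B^{-1/2} A B^{-1/2} \succcurlyeq Id$. On operators $T \succcurlyeq Id$ the spectral theorem gives $T^{-1} \preccurlyeq Id$ at once, so this yields $B^{1/2} A^{-1} B^{1/2} \preccurlyeq Id$; conjugating once more by $B^{-1/2}$ produces $A^{-1} \preccurlyeq B^{-1}$. The outer bounds $\alpha^{-1} Id \succcurlyeq B^{-1}$ and $A^{-1} \succcurlyeq \mu^{-1} Id$ come from applying the same order-reversal to $B \succcurlyeq \alpha Id$ and $\mu Id \succcurlyeq A$ respectively.

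For (ii), I would rewrite $\inner{A^{-1}x}{x} = \inner{A^{-1/2}x}{A^{-1/2}x} = \|A^{-1/2}x\|^2$, using that $A^{-1/2}$ is self-adjoint. The desired inequality then reduces to $\|x\|^2 \leq \|A\|\,\|A^{-1/2}x\|^2$, which is immediate from $\|x\| = \|A^{1/2} A^{-1/2} x\| \leq \|A^{1/2}\|\,\|A^{-1/2} x\|$ combined with the identity $\|A^{1/2}\|^2 = \|A\|$ (a direct consequence of the spectral theorem for positive self-adjoint operators). Item (iii) is then a one-line consequence of (i): the formula $\|T\| = \sup_{\|x\|\leq 1} \inner{Tx}{x}$ valid for positive self-adjoint $T$, applied to $T = A^{-1}$, combined with the upper bound $A^{-1} \preccurlyeq \alpha^{-1} Id$ from (i), gives $\|A^{-1}\| \leq \alpha^{-1}$.

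The only genuinely delicate ingredient is the order-reversing property of inversion underpinning (i); this is the classical monotonicity fact cited from Kato (Section VI.2.6), so in the actual write-up I would appeal to that reference rather than reproving it via spectral calculus. Everything else is either a conjugation trick or a direct application of $\|A^{1/2}\|^2 = \|A\|$ and $\|T\| = \sup \inner{Tx}{x}$ for positive self-adjoint $T$.
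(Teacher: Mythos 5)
Your proof is correct. Note that the paper itself states this lemma without proof, importing it from the cited references (Kato, Section VI.2.6, and Combettes--Vu), so there is no in-paper argument to compare against; your write-up --- order reversal of inversion via conjugation by $B^{-1/2}$ and the reduction $T \succcurlyeq Id \Rightarrow T^{-1} \preccurlyeq Id$, the identity $\inner{A^{-1}x}{x} = \|A^{-1/2}x\|^2$ together with $\|A^{1/2}\|^2 = \|A\|$ for (ii), and $\|A^{-1}\| = \sup_{\|x\|\leq 1}\inner{A^{-1}x}{x} \leq \alpha^{-1}$ for (iii) --- is exactly the standard argument those references use, and every step checks out.
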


\begin{lemma}[{[\cite{Combettes2014Vu}]}]\label{lem3.7(11)}
Let $A:\mathcal{H}\rightarrow 2^{\mathcal{H}}$ be maximally monotone, let $\alpha\in(0,+\infty)$, let $U\in \mathcal{P}_{\alpha}(\mathcal{H})$, and let $\mathcal{G}$ be the real Hilbert space obtained by endowing $\mathcal{H}$ with the scalar product $(x,y)\mapsto \inner{x}{y}_{U^{-1}}=\inner{x}{U^{-1}y}$. Then the following hold.
\begin{enumerate}[label=(\roman*)]
\item $UA:\mathcal{G}\rightarrow 2^{\mathcal{G}}$ is maximally monotone.
\item $J_{UA}:\mathcal{G}\rightarrow\mathcal{G}$ is 1-cocoercive, i.e., firmly nonexpansive, hence nonexpansive.
\item $J_{UA} = (U^{-1}+A)^{-1}\circ U^{-1}$.
\end{enumerate}
\end{lemma}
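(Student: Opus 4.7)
The plan is to prove part (iii) first, since the explicit formula it yields reduces the nontrivial maximality statement in (i) to a standard sum-theorem argument back in $\mathcal{H}$. Unrolling the definition, $p = J_{UA}x$ in $\mathcal{G}$ means $x \in p + UAp$, i.e.\ $x-p \in UAp$; since $\alpha > 0$ forces $U$ to be invertible with $U^{-1} \in \mathcal{P}_{\|U\|^{-1}}(\mathcal{H})$ by Lemma \ref{lem2.1(10)}(i), this is equivalent to $U^{-1}x - U^{-1}p \in Ap$, hence to $U^{-1}x \in (U^{-1}+A)p$, giving $p = (U^{-1}+A)^{-1}(U^{-1}x)$. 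Since the identity map of $\mathcal{G}$ coincides as a set map with that of $\mathcal{H}$, the notions of resolvent, domain, and zeros transfer coherently between the two metrics, so no rewriting is required.

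For (i), monotonicity on $\mathcal{G}$ is a one-line check: for $(x, Uu), (y, Uv) \in \operatorname{Gra}(UA)$ with $u \in Ax$ and $v \in Ay$,
\[
\inner{x-y}{Uu-Uv}_{U^{-1}} = \inner{U^{-1}U(u-v)}{x-y} = \inner{u-v}{x-y} \geq 0
\]
by monotonicity of $A$ in $\mathcal{H}$. For maximality I would invoke Minty's theorem: $UA$ is maximally monotone on $\mathcal{G}$ iff $\operatorname{ran}(Id + UA) = \mathcal{G}$, equivalently iff $J_{UA}$ has full domain. By Lemma \ref{lem2.1(10)}(i), $U^{-1} \succcurlyeq \|U\|^{-1}Id$, so $U^{-1}$ is a strongly monotone bounded linear operator on $\mathcal{H}$, hence maximally monotone with full domain. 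Rockafellar's sum theorem then yields that $U^{-1}+A$ is maximally monotone on $\mathcal{H}$ (the domain qualification being automatic since $\operatorname{dom} U^{-1} = \mathcal{H}$), and the strong monotonicity of $U^{-1}$ further forces $\operatorname{ran}(U^{-1}+A) = \mathcal{H}$. Combined with the formula from (iii), this produces $\operatorname{dom} J_{UA} = \mathcal{H} = \mathcal{G}$, closing the maximality argument.

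Finally, part (ii) is the classical firm-nonexpansiveness of the resolvent of a maximally monotone operator, transcribed to the $U^{-1}$-metric. Setting $p = J_{UA}x$ and $q = J_{UA}y$, one has $x-p \in UAp$ and $y-q \in UAq$; applying the $\mathcal{G}$-monotonicity of $UA$ from the previous step gives $\inner{p-q}{(x-p)-(y-q)}_{U^{-1}} \geq 0$, which rearranges to $\inner{p-q}{x-y}_{U^{-1}} \geq \|p-q\|^2_{U^{-1}}$, the definition of $1$-cocoercivity in $\mathcal{G}$; nonexpansiveness then follows by Cauchy--Schwarz.

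The main obstacle is the maximality half of (i): one must verify that the reformulation in terms of $U^{-1}+A$, which lives in $\mathcal{H}$, correctly captures maximality of $UA$ in $\mathcal{G}$. This works precisely because $U$ and $U^{-1}$ are bounded, so the weak and strong topologies on $\mathcal{H}$ and $\mathcal{G}$ coincide, allowing Minty's characterization and Rockafellar's sum theorem to be imported freely between the two Hilbert structures.
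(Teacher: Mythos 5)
Your proposal is correct; the paper itself gives no proof of this lemma (it is imported verbatim from Combettes--V\~u), and your argument --- unrolling the resolvent to get (iii), establishing maximality via Minty's theorem together with surjectivity of the strongly monotone maximally monotone sum $U^{-1}+A$, and deducing firm nonexpansiveness from the $\mathcal{G}$-monotonicity of $UA$ --- is essentially the standard proof given in that reference. No gaps: the completeness of $\mathcal{G}$ and the transfer of Minty's characterization are justified by the norm equivalence $\|U\|^{-1}Id \preccurlyeq U^{-1} \preccurlyeq \alpha^{-1}Id$, exactly as you note.
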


\begin{lemma}[{[\cite{Variablemetic_quaisi_Combettes2013Vu}]}, {[\cite{Polyak1987}]} lemma 2 in section 2.2.1]\label{lem 2.2, Combettes&Vu}
Let $(\alpha_n)_{n\in\mathbb{N}}$ be a sequence in $[0,+\infty)$, let $(\eta_n)_{n\in\mathbb{N}}\in\ell_{+}^1(\mathbb{N})$ and let $(\epsilon_n)_{n\in\mathbb{N}}\in\ell_{+}^1(\mathbb{N})$ be such that $(\forall n\in\mathbb{N})$ $\alpha_{n+1}\leq(1+\eta_n)\alpha_n+\epsilon_n$. Then $(\alpha_n)_{n\in\mathbb{N}}$ converges.
\end{lemma}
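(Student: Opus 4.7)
The plan is to reduce the perturbed recursion to a near-monotone one and then use the standard tail-sum trick for nonincreasing sequences.

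First I would absorb the multiplicative factor $(1+\eta_n)$ by rescaling. Set
\begin{align*}
\tau_n = \prod_{k=0}^{n-1}(1+\eta_k),\qquad \tau_0 = 1,
\end{align*}
and observe that, since $(\eta_n)\in\ell_+^1(\mathbb{N})$, the infinite product converges to a finite limit $\tau_\infty\in[1,+\infty)$; in particular $1\le \tau_n\le\tau_\infty$ for all $n$. Dividing the hypothesis $\alpha_{n+1}\le(1+\eta_n)\alpha_n+\epsilon_n$ by $\tau_{n+1}$ and setting $\tilde\alpha_n=\alpha_n/\tau_n$, $\tilde\epsilon_n=\epsilon_n/\tau_{n+1}$, I get the simpler recursion
\begin{align*}
\tilde\alpha_{n+1}\le \tilde\alpha_n+\tilde\epsilon_n,
\end{align*}
where $(\tilde\epsilon_n)\in\ell_+^1(\mathbb{N})$ because $\tau_{n+1}\ge 1$.

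Next I would show $(\tilde\alpha_n)$ converges by introducing the tail sums $T_n=\sum_{k\ge n}\tilde\epsilon_k$, which are well defined, nonnegative, and satisfy $T_n\to 0$. The recursion yields
\begin{align*}
\tilde\alpha_{n+1}+T_{n+1}\le \tilde\alpha_n+\tilde\epsilon_n+T_{n+1}=\tilde\alpha_n+T_n,
\end{align*}
so $(\tilde\alpha_n+T_n)_{n\in\mathbb{N}}$ is nonincreasing and bounded below by $0$; therefore it converges to some $\ell\in[0,+\infty)$. Since $T_n\to 0$, it follows that $\tilde\alpha_n\to\ell$, and finally $\alpha_n=\tau_n\tilde\alpha_n\to \tau_\infty\ell$.

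There is no real obstacle here; the only thing to be careful about is ensuring the rescaling factor $\tau_n$ is bounded away from both $0$ and $\infty$ so that $(\tilde\epsilon_n)$ remains summable and so that convergence of $(\tilde\alpha_n)$ transfers back to convergence of $(\alpha_n)$. Both properties follow immediately from $(\eta_n)\in\ell_+^1(\mathbb{N})$. The heart of the argument is the observation that adding the tail $T_n$ converts a quasi-monotone inequality into a genuinely monotone one, which is the standard device behind Robbins--Siegmund-type lemmas.
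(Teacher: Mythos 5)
Your proof is correct. The paper itself does not prove this lemma --- it is quoted without proof from Combettes--V\~u and from Polyak's book --- and your argument (rescaling by the convergent product $\tau_n=\prod_{k<n}(1+\eta_k)$ to remove the multiplicative perturbation, then adding the tail sums $T_n$ to turn the quasi-monotone inequality into a genuine monotone one) is exactly the standard proof given in those references, with all the relevant points (boundedness of $\tau_n$ away from $0$ and $\infty$, summability of $\tilde\epsilon_n$, nonnegativity giving the lower bound) correctly accounted for.
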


\begin{proposition}[{[\cite{Variablemetic_quaisi_Combettes2013Vu}]}]\label{Prop 3.2 (10)}
Let $\alpha\in(0,+\infty)$, let $\phi:[0,+\infty)\rightarrow [0,+\infty)$ be strictly increasing and such that $\lim_{t\rightarrow +\infty}\phi(t) = +\infty$, let $(W_n)_{n\in\mathbb{N}}$ be in $\mathcal{P}_{\alpha}(\mathcal{H})$, let $C$ be a nonempty subset of $\mathcal{H}$, and let $(x_n)_{n\in\mathbb{N}}$ be a sequence in $\mathcal{H}$ such that (\ref{eq of quasi-Fejer}) is satisfied. Then the following hold.
\begin{enumerate}[label=(\roman*)]
\item Let $z\in C$. Then $(\|x_n-z\|_{W_n})_{n\in\mathbb{N}}$ converges.
\item $(x_n)_{n\in\mathbb{N}}$ is bounded. 
\end{enumerate} 
\end{proposition}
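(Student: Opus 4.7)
My plan is to prove (i) by applying Lemma \ref{lem 2.2, Combettes&Vu} to the scalar sequence $\alpha_n := \phi(\|x_n - z\|_{W_n})$. The quasi-Fejer inequality (\ref{eq of quasi-Fejer}) reads exactly $\alpha_{n+1} \leq (1+\eta_n)\alpha_n + \epsilon_n$ with $(\eta_n),(\epsilon_n) \in \ell^1_+(\mathbb{N})$, so the lemma immediately yields that $(\phi(\|x_n - z\|_{W_n}))_{n \in \mathbb{N}}$ converges to some $L \in [0,+\infty)$. What remains is to transfer this to convergence of $a_n := \|x_n - z\|_{W_n}$ itself, using only that $\phi$ is strictly increasing and coercive; this is the one step that is not purely formal, since $\phi$ is not assumed continuous and no inverse-continuity of $\phi$ is available.

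For the transfer, I first observe that $(\phi(a_n))$ is bounded and $\phi(t) \to +\infty$ as $t \to +\infty$, so there exists $T > 0$ with $a_n \leq T$ for all $n$, and $(a_n)$ is bounded. Suppose toward a contradiction that $(a_n)$ admits two distinct cluster points $a < a'$, and fix any $t \in (a, a')$. Along a subsequence $a_{n_k} \to a$, eventually $a_{n_k} < t$, so by strict monotonicity $\phi(a_{n_k}) < \phi(t)$ and passing to the limit $L \leq \phi(t)$. Along a subsequence $a_{m_k} \to a'$, eventually $a_{m_k} > t$, so $\phi(a_{m_k}) > \phi(t)$ and $L \geq \phi(t)$. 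Hence $L = \phi(t)$ for every $t$ in the open interval $(a, a')$, contradicting strict monotonicity of $\phi$. Therefore $(a_n) = (\|x_n - z\|_{W_n})$ converges, which is (i).

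For (ii), the defining property $W_n \succcurlyeq \alpha Id$ of $\mathcal{P}_\alpha(\mathcal{H})$ yields the pointwise comparison $\|x\|^2 \leq \alpha^{-1}\|x\|^2_{W_n}$ for every $x \in \mathcal{H}$. Applied to $x = x_n - z$ for a fixed $z \in C$, and combined with part (i) (which in particular shows $(\|x_n - z\|_{W_n})$ is bounded), this gives boundedness of $(\|x_n - z\|)$ and therefore of $(x_n)$ in the ambient norm.

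The main obstacle is precisely the first transfer step in (i): without continuity of $\phi$ one cannot simply conclude that $\phi^{-1}$ is continuous and pass to the limit; instead one has to exploit strict monotonicity through the contradiction argument sketched above. Everything else is a direct invocation of Lemma \ref{lem 2.2, Combettes&Vu} and of the Loewner bound defining $\mathcal{P}_\alpha(\mathcal{H})$.
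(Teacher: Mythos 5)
Your proof is correct; note that the paper does not prove Proposition \ref{Prop 3.2 (10)} itself but imports it from the cited Combettes--V\~u reference, and your argument is precisely the intended one: Lemma \ref{lem 2.2, Combettes&Vu} applied to $\phi(\|x_n-z\|_{W_n})$ gives convergence of that scalar sequence, coercivity of $\phi$ gives boundedness of $\|x_n-z\|_{W_n}$, and your two-cluster-point argument correctly extracts convergence of $\|x_n-z\|_{W_n}$ from strict monotonicity alone, without needing any continuity of $\phi$. Part (ii) then follows, as you say, from $W_n\succcurlyeq\alpha Id$ and the nonemptiness of $C$.
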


\begin{lemma}[{[\cite{Combettes2001}]}]\label{lem 3.1 (7)}
Let $\chi\in(0,1], (\alpha_n)_{n\geq0}\in\ell_+, (\beta_n)_{n\geq0}\in \ell_+ $ and $(\epsilon_n)_{n\geq 0}\in \ell_+^1$ be such that $$ (\forall n\in\mathbb{N})\; \alpha_{n+1}\leq\chi \alpha_n-\beta_n+\epsilon_n.$$
Then
\begin{enumerate}[label=(\roman*)]
\item $(\alpha_n)_{n\geq 0}$ is bounded.
\item $(\alpha_n)_{n\geq 0}$ converges.
\item $(\beta_n)_{n\geq 0} \in \ell^1$.
\item If $\chi\neq 1, (\alpha_n)_{n\geq 0}\in \ell^1$. 
\end{enumerate}
\end{lemma}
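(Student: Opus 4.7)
The plan is to prove the four assertions in the order (i), (ii), (iii), (iv), exploiting that each conclusion only needs a weakened version of the given recursion. Two tricks carry the whole argument: discarding the favourable $-\beta_n$ term to obtain the one-sided bound $\alpha_{n+1}\leq\chi\alpha_n+\epsilon_n$, and telescoping the full hypothesis (this time keeping $\beta_n$) to extract summability. There is no serious obstacle; the only subtlety is deciding which term to sacrifice at each step.

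For (i), I would iterate $\alpha_{n+1}\leq\chi\alpha_n+\epsilon_n$ and use $\chi\leq1$ together with $(\epsilon_n)\in\ell^1_+$ to conclude $\alpha_n\leq\alpha_0+\sum_{k=0}^{\infty}\epsilon_k<+\infty$ uniformly in $n$. For (ii), I would observe that the same relaxed bound gives $\alpha_{n+1}\leq(1+0)\alpha_n+\epsilon_n$, so Lemma \ref{lem 2.2, Combettes&Vu} applied with $\eta_n\equiv 0$ yields convergence of $(\alpha_n)_{n\geq 0}$.

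For (iii), I would instead retain the $-\beta_n$ term and sum the hypothesis from $n=0$ to $N$, obtaining
\begin{align*}
\sum_{n=0}^{N}\beta_n\leq\chi\alpha_0+(\chi-1)\sum_{n=1}^{N}\alpha_n-\alpha_{N+1}+\sum_{n=0}^{N}\epsilon_n.
\end{align*}
Because $\chi\leq 1$ and $\alpha_n,\alpha_{N+1}\geq 0$, the two middle contributions are non-positive and may be discarded; what remains is a bound uniform in $N$, so $(\beta_n)\in\ell^1$.

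For (iv), I would use the opposite bookkeeping: drop $\beta_n$ once more, and now exploit that $\chi<1$ strictly. Telescoping $\alpha_{n+1}\leq\chi\alpha_n+\epsilon_n$ produces $(1-\chi)\sum_{n=1}^{N}\alpha_n+\alpha_{N+1}\leq\chi\alpha_0+\sum_{n=0}^{N}\epsilon_n$, and dividing by $1-\chi>0$ yields $(\alpha_n)\in\ell^1$. The central point to highlight is that (iii) sacrifices the gain from $1-\chi$ in order to keep $\beta_n$, whereas (iv) does exactly the opposite; beyond this allocation of terms, each part reduces to a routine partial-sum estimate.
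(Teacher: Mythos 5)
Your proof is correct: all four partial-sum/telescoping estimates check out, and invoking Lemma \ref{lem 2.2, Combettes&Vu} with $\eta_n\equiv 0$ for part (ii) is legitimate. The paper itself gives no proof of this lemma (it is quoted from [\cite{Combettes2001}]), and your argument is essentially the standard one from that reference, so there is nothing to amend.
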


\begin{proposition}[{[\cite{BC-Book}]} Proposition 20.33]\label{Prop 20.33 (BC)}
Let $A:\mathcal{H}\rightarrow 2^{\mathcal{H}}$ be maximally monotone. Then the following hold:
\begin{enumerate}[label=(\roman*)]
\item $Gra A$ is sequentially closed in $\mathcal{H}^\text{strong}\times \mathcal{H}^\text{weak}$, i.e., for every sequence $(x_n,u_n)_{n\in\mathbb{N}}$ in $Gra A$ and every $(x,u)\in \mathcal{H}\times\mathcal{H}$ if $x_n\rightarrow x$ and $u_n\rightharpoonup u$, then $(x,u)\in Gra A$
\item $Gra A$ is sequentially closed in $\mathcal{H}^\text{weak}\times \mathcal{H}^\text{strong}$, i.e., for every sequence $(x_n,u_n)_{n\in\mathbb{N}}$ in $Gra A$ and every $(x,u)\in \mathcal{H}\times\mathcal{H}$ if $x_n\rightharpoonup x$ and $u_n \rightarrow u$, then $(x,u)\in Gra A$
\item $Gra A$ is closed in $ \mathcal{H}^\text{strong}\times \mathcal{H}^\text{strong}$.
\end{enumerate}
\end{proposition}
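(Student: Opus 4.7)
The plan is to exploit the characterization of maximal monotonicity recalled in the introduction: a monotone operator $A$ is maximal precisely when, for every $(x,u) \in \mathcal{H}\times\mathcal{H}$,
\[
(x,u) \in Gra A \;\Longleftrightarrow\; \bigl(\forall (y,v) \in Gra A\bigr)\; \langle x-y, u-v\rangle \geq 0.
\]
So, given a sequence $((x_n, u_n))_{n\in\mathbb{N}}$ in $Gra A$ whose limit $(x,u)$ exists in one of the three product topologies, I would fix an arbitrary $(y,v) \in Gra A$, use monotonicity of $A$ at the pairs $(x_n,u_n)$ and $(y,v)$ to obtain $\langle x_n - y, u_n - v \rangle \geq 0$ for every $n$, then pass to the limit to conclude $\langle x - y, u - v \rangle \geq 0$, and finally invoke maximality to deduce $(x,u) \in Gra A$. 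This reduces all three items to a single analytic question: does the bilinear pairing pass to the limit under the topology at hand?

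For (iii), both factors converge strongly so the pairing passes trivially. For (i) and (ii) I would use the decomposition
\[
\langle x_n - y, u_n - v \rangle = \langle x_n - x, u_n \rangle + \langle x, u_n \rangle - \langle x_n, v \rangle - \langle y, u_n \rangle + \langle y, v \rangle.
\]
In case (i), with $x_n \to x$ strongly and $u_n \rightharpoonup u$ weakly, the first summand is dominated by $\|x_n - x\|\,\|u_n\|$, which tends to zero because weakly convergent sequences are norm-bounded; the remaining four terms are linear in either $x_n$ or $u_n$ and converge by strong or weak continuity of the inner product with a fixed vector. Case (ii) is symmetric: swap the roles of the two factors and repeat the same splitting, this time absorbing the residual into $\|u_n - u\|\,\|x_n\|$.

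The main obstacle is precisely the coupled quadratic term $\langle x_n, u_n \rangle$, whose limit is not automatic under mixed weak/strong convergence; the decomposition above isolates a bounded-times-null residual and reduces everything else to continuous linear functionals evaluated along sequences that converge at least weakly. Once the limit inequality $\langle x-y,u-v\rangle \geq 0$ is in hand, maximality of $A$ closes the proof in a single line and no further structural hypothesis on $A$ is needed.
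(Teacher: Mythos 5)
Your argument is correct and is exactly the standard proof of this result: the paper itself states the proposition as a cited fact from [\cite{BC-Book}] (Proposition 20.33) without reproducing a proof, and the textbook proof is precisely the one you give --- fix $(y,v)\in Gra\, A$, use monotonicity to get $\langle x_n-y,u_n-v\rangle\geq 0$, pass to the limit using that a weakly convergent sequence is norm-bounded so the mixed term $\langle x_n-x,u_n\rangle$ vanishes, and invoke the maximality characterization to conclude $(x,u)\in Gra\, A$. Nothing is missing; your decomposition correctly isolates the only problematic coupled term and handles it with the uniform-boundedness consequence of weak convergence.
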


\begin{lemma}[{[\cite{Variablemetic_quaisi_Combettes2013Vu}]} Lemma 2.2]\label{lem 2.3 (10)}
Let $\alpha\in(0,+\infty)$ let $(\eta_n)_{n\in\mathbb{N}}\in\ell^1_+(\mathbb{N})$, and let $(W_n)_{n\in\mathbb{N}}$ be a sequence in $\mathcal{P}_\alpha(\mathcal{H})$ such that $\mu = \sup_{n\in\Nbb}\|W_n\|<+\infty$. Suppose that one of the following holds.
\begin{enumerate}[label=(\roman*)]
\item $(\forall n\in\mathbb{N})\quad (1+\eta_n)W_n \succcurlyeq W_{n+1}$.
\item $(\forall n\in\mathbb{N})\quad  (1+\eta_n)W_{n+1} \succcurlyeq W_{n}$.
\end{enumerate} 
Then there exists $W\in\mathcal{P}_\alpha(\mathcal{H})$ such that $W_n\rightarrow W$ pointwise.
\end{lemma}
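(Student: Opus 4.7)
The plan is to reduce both cases to a Loewner-monotone, uniformly bounded sequence of positive self-adjoint operators, and then apply a strong-operator monotone convergence argument. First I would set $P_n = \prod_{k=0}^{n-1}(1+\eta_k)$, which converges to a finite $P_\infty \in [1,+\infty)$ because $(\eta_n)_{n\in\mathbb{N}} \in \ell^1_+(\mathbb{N})$ forces $\sum_k \log(1+\eta_k) \leq \sum_k \eta_k < +\infty$. In case (i), put $\tilde W_n = P_n^{-1} W_n$; a direct computation rewrites the hypothesis $(1+\eta_n)W_n \succcurlyeq W_{n+1}$ as $\tilde W_n \succcurlyeq \tilde W_{n+1}$, and the bounds $\alpha P_\infty^{-1}\, Id \preccurlyeq \tilde W_n \preccurlyeq \mu\, Id$ make $(\tilde W_n)_{n\in\mathbb{N}}$ a Loewner-decreasing, uniformly bounded, uniformly coercive sequence in $\mathcal{S}(\mathcal{H})$. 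In case (ii) I would instead take $\tilde W_n = P_n W_n$ to obtain a Loewner-increasing sequence with analogous uniform bounds.

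For such a monotone bounded sequence, I would first establish convergence in the weak operator topology. For each $x \in \mathcal{H}$ the scalar sequence $\langle \tilde W_n x, x\rangle$ is monotone and bounded, hence convergent; the polarization identity for the symmetric forms $(x,y) \mapsto \langle \tilde W_n x, y\rangle$ then yields convergence of $\langle \tilde W_n x, y\rangle$ for every $x, y$. Combined with $\sup_n \|\tilde W_n\| < +\infty$, the limit is a bounded symmetric sesquilinear form, so Riesz representation produces a self-adjoint $\tilde W \in \mathcal{B}(\mathcal{H})$ with $\langle \tilde W_n x, y\rangle \to \langle \tilde W x, y\rangle$ for all $x,y$.

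The main obstacle is upgrading this weak convergence to strong pointwise convergence $\tilde W_n x \to \tilde W x$. I would invoke the inequality $\|A x\|^2 \leq \|A\|\,\langle A x, x\rangle$, valid for any $A \in \mathcal{S}(\mathcal{H})$ with $A \succcurlyeq 0$; it follows from $A^2 \preccurlyeq \|A\|\, A$, which in turn holds because the commuting positive operators $A$ and $\|A\|\, Id - A$ have positive product. Applying it in case (i) to the positive operator $A = \tilde W_n - \tilde W$, whose norm is uniformly bounded (by $\|\tilde W_0\|$) and whose quadratic form $\langle A x, x\rangle$ tends to $0$ by the previous step, yields $\|\tilde W_n x - \tilde W x\| \to 0$ for every $x$; case (ii) is symmetric with $A = \tilde W - \tilde W_n$.

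Finally I would set $W := P_\infty \tilde W$ in case (i) and $W := P_\infty^{-1} \tilde W$ in case (ii). Since $P_n \to P_\infty$ is a scalar convergence and $\tilde W_n x \to \tilde W x$ strongly, one obtains $W_n x \to W x$ for every $x \in \mathcal{H}$; passing to the limit in $\langle W_n x, x\rangle \geq \alpha \|x\|^2$ gives $W \succcurlyeq \alpha\, Id$, hence $W \in \mathcal{P}_\alpha(\mathcal{H})$, finishing the plan.
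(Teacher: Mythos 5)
Your proof is correct. The paper does not prove this lemma itself---it is quoted verbatim from Combettes and V\~u's \emph{Variable metric quasi-Fej\'er monotonicity} (Lemma~2.2 there)---and your argument is essentially the one used in that reference: renormalize by $P_n=\prod_{k=0}^{n-1}(1+\eta_k)$ to obtain a Loewner-monotone, uniformly bounded sequence, and then invoke (here, prove) the classical fact that such a sequence of self-adjoint operators converges in the strong operator topology, via $\|Ax\|^2\leq\|A\|\,\inner{Ax}{x}$ for $A\succcurlyeq 0$. All the individual steps (the rewriting of the hypotheses as monotonicity of $\tilde W_n$, the two-sided bounds, polarization, and the passage to the limit giving $W\succcurlyeq\alpha\,Id$) check out.
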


\begin{theorem}[{[\cite{Variablemetic_quaisi_Combettes2013Vu}]} Theorem 3.3]\label{Thm 3.3 (10)}
Let $\alpha\in(0,+\infty)$, let $\phi:[0,+\infty)\rightarrow [0,+\infty)$ be strictly increasing and such that $\lim_{t\rightarrow +\infty} \phi(t) = +\infty$, let $(W_n)_{n\in\mathbb{N}}$ and $W$ be operators on $\mathcal{P}_\alpha(\mathcal{H})$ such that $W_n\rightarrow W$ pointwise, let $C$ be nonempty subset of $\mathcal{H}$ and let $(x_n)_{n\in\mathbb{N}}$ be sequence in $\mathcal{H}$ such that 
(\ref{eq of quasi-Fejer}) is satisfied. Then $(x_n)_{n\in\mathbb{N}}$ converges weakly to a point in $C$ if and only if every weak sequential cluster point of  $ (x_n)_{n\in\mathbb{N}}$ is in $C$.

\end{theorem}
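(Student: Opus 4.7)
The ``only if'' direction is immediate: weak convergence of $(x_n)_{n\in\mathbb{N}}$ to some $x^\ast\in C$ forces $x^\ast$ to be the unique weak sequential cluster point of $(x_n)_{n\in\mathbb{N}}$, and it lies in $C$ by assumption. All content is in the converse, which I would treat as an Opial-type uniqueness lemma adapted to the variable-metric setting. The overall strategy: Proposition \ref{Prop 3.2 (10)} supplies the two ingredients I need, namely that $(x_n)_{n\in\mathbb{N}}$ is bounded (so weak sequential cluster points exist) and that for every $z\in C$ the real sequence $(\|x_n-z\|_{W_n})_{n\in\mathbb{N}}$ converges. By hypothesis every weak cluster point of $(x_n)_{n\in\mathbb{N}}$ already lies in $C$, so what remains is to promote ``every cluster point in $C$'' to ``a unique cluster point in $C$''.

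The core step is to show that any two weak cluster points $x,y\in C$ coincide. I would pick subsequences $x_{n_k}\rightharpoonup x$ and $x_{n_\ell}\rightharpoonup y$, use self-adjointness of $W_n$ to expand
\begin{align*}
\|x_n-x\|_{W_n}^2-\|x_n-y\|_{W_n}^2 = -2\inner{x_n}{W_n(x-y)} + \inner{W_n x}{x} - \inner{W_n y}{y},
\end{align*}
observe that the left-hand side converges as a whole (both summands do, by Proposition \ref{Prop 3.2 (10)}(i) and continuity of squaring), and then evaluate the limit along the two subsequences. The coupling between the variable metric and the iterate would be decomposed via
\begin{align*}
\inner{x_n}{W_n u} = \inner{x_n}{Wu} + \inner{x_n}{(W_n-W)u},
\end{align*}
with the second summand tending to $0$ by Cauchy--Schwarz since $(x_n)_{n\in\mathbb{N}}$ is bounded and $(W_n-W)u\to 0$ strongly by pointwise convergence. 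Comparing the two subsequential limits then yields $\inner{W(x-y)}{x-y}=0$, and the coercivity bound $W\succcurlyeq \alpha Id$ with $\alpha>0$ forces $x=y$.

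The main obstacle I anticipate is the careful handling of the variable metric: I need to make sure the limit manipulations survive the swap of $W_n$ for $W$, and that the passage from convergence of $\phi(\|x_n-z\|_{W_n})$ in Definition \ref{gen. def of quasi-Fejer} to genuine convergence of $\|x_n-z\|_{W_n}$ is legitimate (this is exactly what Proposition \ref{Prop 3.2 (10)}(i) already packages, consuming the strict monotonicity and coercivity of $\phi$). Boundedness of $(W_n)_{n\in\mathbb{N}}$ in operator norm, should it be needed explicitly, follows from pointwise convergence via the uniform boundedness principle. Once these bookkeeping items are in place the Opial-style argument above closes the proof, since boundedness plus uniqueness of the weak cluster point is equivalent to weak convergence in a Hilbert space.
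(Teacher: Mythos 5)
Your argument is correct and complete: the reduction to uniqueness of the weak cluster point via Proposition \ref{Prop 3.2 (10)}, the expansion $\|x_n-x\|_{W_n}^2-\|x_n-y\|_{W_n}^2=-2\inner{x_n}{W_n(x-y)}+\inner{W_nx}{x}-\inner{W_ny}{y}$, the replacement of $W_n$ by $W$ using boundedness of $(x_n)_{n\in\mathbb{N}}$ and pointwise convergence, and the coercivity $W\succcurlyeq\alpha Id$ to conclude $x=y$ all check out. The paper itself states this theorem as an imported result from the cited reference without reproducing a proof, and your Opial-type argument is precisely the standard one used there, so there is nothing to flag.
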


\section{A Variable Metric Tseng's Algorithm with Extrapolation from the Past and Error Terms}

Tseng's algorithm was first proposed in [\cite{Tseng2000}] to solve inclusion involving the sum of a maximally monotone operator and a monotone Lipschitzian operator. This algorithm was considered to include computational errors in [\cite{AriasCombettes2011}] and was lately modified to involve variable metric in [\cite{Vu2013Variable}]. Now we will extend it into an extrapolation scheme.

The adjustment of our algorithm started by adding extrapolation into classical Tseng's algorithm (FB) expecting that it will reduce a cost of computation and at first we called "Tseng's algorithm with extrapolation". Next, we considered this adjusted algorithm by adding the involved error terms. Lastly, the algorithm was made to become more general by working with variable metric. Then, we called "Variable Metric Tseng's algortihm with Extrapolation and Error" for the new altered algorithm. Below we will corroborate our proposed algorithm in the theorem with its proof.

\begin{theorem}\label{Thm_TengEP_variable_Error}
Let $A: \Hcal \rightarrow 2^{\Hcal}$ be maximally monotone, let $\alpha,\beta\in(0,+\infty)$, $B:\Hcal\rightarrow\Hcal$ be a monotone and $\beta$-Lipschitzian operator on $\mathcal{H}$ such that $zer(A+B)\neq \emptyset$, let $(\eta_n)_{n\in\Nbb}\in \ell^1_{+}(\Nbb)$ and $(U_n)_{n\in\Nbb}$ be a sequence in $\mathcal{P}_{\alpha}(\Hcal)$ such that 
\begin{align}\label{ConditionA}
\mu = \sup_{n\in\Nbb}\|U_n\|<+\infty\quad  \text{and} \quad (\forall n\in N) \quad (1+\eta_n)U_{n+1} \succcurlyeq U_n.
\end{align}
Let $(\gamma_n)_{n\in\Nbb} \leq \lambda$ with $\lambda < \frac{1}{ \sqrt{10}\mu\beta}$ and $\liminf_{n\rightarrow +\infty} \gamma_n>0$. Let $(a_n)_{n\in\Nbb},(b_n)_{n\in\Nbb}$ and $(c_n)_{n\in\Nbb}$ be absolutely summable sequences in $\Hcal$. Let $x_0,  p_{-1} \in\Hcal$ and set
\begin{align}\label{set_para1}
(\forall n\in N) 
\begin{cases}
 y_n = x_n-\gamma_n U_n (B(p_{n-1})+a_n) ,\\ 
 p_n = J_{\gamma_nU_nA}(y_n) + b_n , \\
 q_n = p_n - \gamma_nU_n(B(p_n)+c_n) , \\
 x_{n+1} = x_n - y_n + q_n.
\end{cases}
\end{align}
Then the following hold for some $\bar{x} \in zer(A+B)$.
\begin{enumerate}[label=(\roman*)]
\item $\sum\limits_{n\in\mathbb{N}}\|x_n - p_n\|^2 < +\infty$ and $ \sum\limits_{n\in\mathbb{N}} \|y_n-q_n\|^2 <+\infty$,
\item $x_n\rightharpoonup \bar{x}$ and $p_n\rightharpoonup \bar{x}$.
\end{enumerate}

\end{theorem}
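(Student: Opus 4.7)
My plan is to run the standard three-stage analysis for Fejér-type splitting schemes (monotone inclusion $\to$ Lyapunov descent $\to$ weak cluster-point identification), and then absorb the lag created by using $B(p_{n-1})$ instead of $B(x_n)$ by an additional quadratic term in the Lyapunov functional, following the spirit of the extragradient-from-the-past analysis of Malitsky--Tam and Böhm et al., but adapted to the variable metric / error terms setting of Combettes--V\~u.

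\textbf{Step 1: monotone inclusion.} From the definition of the resolvent under the metric induced by $U_n$, the line $p_n = J_{\gamma_n U_n A}(y_n) + b_n$ together with Lemma \ref{lem3.7(11)} gives
\begin{equation*}
\frac{1}{\gamma_n} U_n^{-1}\bigl(y_n - p_n + b_n\bigr) \in A(p_n - b_n).
\end{equation*}
Substituting $y_n = x_n - \gamma_n U_n(B(p_{n-1})+a_n)$ and taking $\bar{x}\in\mathrm{zer}(A+B)$ (so that $-B\bar{x}\in A\bar{x}$), monotonicity of $A$ between $p_n-b_n$ and $\bar{x}$ yields the inner product inequality
\begin{equation*}
\langle p_n - b_n - \bar{x},\, x_n - p_n + b_n\rangle_{U_n^{-1}} \geq \gamma_n\langle p_n - b_n - \bar{x},\, B(p_{n-1}) - B\bar{x} + a_n\rangle.
\end{equation*}
Using also monotonicity of $B$ at $(p_n,\bar{x})$ to replace $B\bar{x}$ by $B(p_n)$ (plus a non-negative remainder) is the standard trick.

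\textbf{Step 2: Lyapunov descent.} A direct calculation from the update rule gives the identity
\begin{equation*}
x_{n+1} - \bar{x} = p_n - \bar{x} + \gamma_n U_n\bigl(B(p_{n-1}) - B(p_n) + a_n - c_n\bigr),
\end{equation*}
so I would expand $\|x_{n+1}-\bar{x}\|_{U_n^{-1}}^2$ and substitute the Step~1 inequality into the cross term. The terms $\|U_n v\|_{U_n^{-1}}^2 = \|v\|_{U_n}^2 \le \mu\|v\|^2$ contribute the factor $\mu$ in the step-size condition. After using $\beta$-Lipschitzness of $B$ to bound $\|B(p_{n-1})-B(p_n)\|\le\beta\|p_{n-1}-p_n\|$ and Young's inequality to split the resulting $\|p_{n-1}-p_n\|\cdot\|p_n-x_n\|$ cross term, I expect to arrive at an estimate of the form
\begin{equation*}
\|x_{n+1}-\bar{x}\|_{U_n^{-1}}^2 + C_1\gamma_n^2\beta^2\mu\,\|p_n - p_{n-1}\|^2 \leq \|x_n-\bar{x}\|_{U_n^{-1}}^2 - C_2\|x_n-p_n\|^2 + C_3\gamma_{n-1}^2\beta^2\mu\,\|p_{n-1}-p_{n-2}\|^2 + \varepsilon_n,
\end{equation*}
where $\varepsilon_n$ is summable (controlled by $\|a_n\|,\|b_n\|,\|c_n\|$, the bound on $x_n-\bar{x}$, and an application of Lemma \ref{lem 2.2, Combettes&Vu} for a priori boundedness). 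Defining the Lyapunov sequence $\Phi_n := \|x_n-\bar{x}\|_{U_n^{-1}}^2 + C_3\gamma_{n-1}^2\beta^2\mu\,\|p_{n-1}-p_{n-2}\|^2$ and converting from $U_n^{-1}$ to $U_{n+1}^{-1}$ via (\ref{ConditionA}) and Lemma \ref{lem2.1(10)}(i) (which yields $(1+\eta_n)U_n^{-1}\succcurlyeq U_{n+1}^{-1}$), I expect a recursion $\Phi_{n+1} \le (1+\eta_n)\Phi_n - C\|x_n-p_n\|^2 + \varepsilon_n'$; the constant from the step-size restriction $\lambda<1/(\sqrt{10}\mu\beta)$ is exactly what should make $C>0$.

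\textbf{Step 3: consequences.} Lemma \ref{lem 3.1 (7)} (in its $\chi=1$ form, which is actually Lemma \ref{lem 2.2, Combettes&Vu} combined with the telescoping) then yields convergence of $(\Phi_n)$ and $\sum_n\|x_n-p_n\|^2 < +\infty$. Since $y_n - q_n = x_n - x_{n+1}$ and the increment is controlled by $\|p_n-p_{n-1}\|$, $\|x_n-p_n\|$ and summable errors, $\sum_n\|y_n-q_n\|^2<+\infty$ follows, giving (i). For (ii), the recursion shows that $(x_n)$ is $\phi$-quasi-Fej\'er monotone (with $\phi=|\cdot|^2$) relative to $(U_n^{-1})$ with respect to $\mathrm{zer}(A+B)$, so Proposition \ref{Prop 3.2 (10)} gives boundedness, Lemma \ref{lem 2.3 (10)}(ii) (applied to $W_n=U_n^{-1}$ after verifying its hypotheses from (\ref{ConditionA}) and Lemma \ref{lem2.1(10)}) gives pointwise convergence of the metric operators, and Theorem \ref{Thm 3.3 (10)} reduces weak convergence to the identification of every weak cluster point as a zero of $A+B$. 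This last identification is standard: if $x_{n_k}\rightharpoonup \bar{x}^{\star}$, then from $\|x_n-p_n\|\to 0$ also $p_{n_k}\rightharpoonup \bar{x}^{\star}$, and passing to the weak-strong limit in the inclusion $\gamma_n^{-1}U_n^{-1}(y_n-p_n+b_n) - B(p_{n-1}) - a_n\in A(p_n-b_n)$ via Proposition \ref{Prop 20.33 (BC)} (together with Lipschitz continuity of $B$ to handle the $p_{n-1}$ lag) produces $-B\bar{x}^{\star}\in A\bar{x}^{\star}$.

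\textbf{Main obstacle.} The delicate step is Step~2: matching coefficients so that the quadratic form in $(\|x_n-p_n\|, \|p_n-p_{n-1}\|, \|p_{n-1}-p_{n-2}\|)$ is positive-definite after telescoping, under the metric distortion $U_n\to U_{n+1}$. The precise bookkeeping is what forces the constant $\sqrt{10}$ in $\lambda<1/(\sqrt{10}\mu\beta)$, and making sure the error budget ($a_n$, $b_n$, $c_n$ summable plus the $\eta_n$-perturbation from the variable metric) is absorbed cleanly into a quasi-Fej\'er-monotone inequality is the only genuinely new calculation.
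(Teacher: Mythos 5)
Your plan is correct and follows essentially the same route as the paper's proof: a resolvent inclusion plus monotonicity of $A$ and $B$ in the $\inner{\cdot}{\cdot}_{U_n^{-1}}$ metric, a Lyapunov functional of the form $\|x_n-\bar x\|^2_{U_n^{-1}}+\mathrm{const}\cdot\gamma_{n-1}^2\mu\beta^2\|p_{n-1}-p_{n-2}\|^2$ telescoped through the metric change $(1+\eta_n)U_n^{-1}\succcurlyeq U_{n+1}^{-1}$, then Lemma \ref{lem 2.2, Combettes&Vu}/Lemma \ref{lem 3.1 (7)}, quasi-Fej\'er monotonicity, and cluster-point identification via Proposition \ref{Prop 20.33 (BC)} and Theorem \ref{Thm 3.3 (10)}. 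The only differences are cosmetic (the paper isolates the errors by introducing exact auxiliary sequences $\tilde y_n,\tilde p_n,\tilde q_n$ rather than carrying $b_n$ through the monotonicity inequality as you do), and the bookkeeping you defer is exactly the paper's parallelogram-law step $-\|x_n-\tilde p_n\|^2\le\|x_n-p_{n-1}\|^2-\frac12\|p_{n-1}-\tilde p_n\|^2$ combined with $\|x_n-p_{n-1}\|\le\gamma_{n-1}\mu\beta\|p_{n-2}-p_{n-1}\|+\text{(errors)}$, which yields the coefficient comparison $4\gamma^2\mu\beta^2<\frac{1}{2\mu}-\gamma^2\mu\beta^2$ and hence the constant $\sqrt{10}$, as you anticipated.
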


\begin{remark} \label{Alg to OGDA}
We given some remarks below.
\begin{enumerate}[label=(\roman*)]

\item From the proposed algorithm, if we put $a_n=c_n=0$ but $b_n$  still remains, the algorithm turn into
\begin{align*}
	(\forall n\in N) 
	\begin{cases}
		p_n = J_{\gamma_nU_nA}(x_n-\gamma_{n} U_n B(p_{n-1})) + b_n , \\
		x_{n+1} = p_n + \gamma_n U_n (B(p_{n-1})-B(p_n)),
	\end{cases}
\end{align*} then the algorithm is the adaptation of OGDA in [\cite{Malitsky2020Tam}] with variable metric and errors. In fact, the OGDA is nothing else than a particular case of our algorithm when setting $a_n=b_n=c_n=0$ and $U_n=Id$, i.e.,
\begin{align*}
	p_{n+1} = J_{\gamma_{n} A} (x_{n+1}-\gamma_{n}  B(p_{n})) =  J_{\gamma_{n} A} (p_n -2\gamma_n B(p_n)+\gamma_{n} B(p_{n-1})),
\end{align*}
in which two initial points $p_0$ and $p_1$ are required for this iterative formula.

\item Because the error terms and variable metrics that appear in this algorithm, they make our method more flexible to handle. Indeed, it can  generate a more alternative variable metric algorithm with error by using a different error model and involved iteration-dependent variable metrics.

\item In the error-free case  ($a_n=b_n=c_n=0$), we can observe that the results hold when the stepsize fulfills ${ 0 < \gamma_n < \frac{1}{2\mu\beta} }$ and $\liminf_{n\rightarrow +\infty} \gamma_n>0$. 

\end{enumerate}
\end{remark}

\begin{proof}
The structure of the proof starts with a new setting of variables, the algorithm in an error-free case and their properties relating to semi-scalar product and semi-norm. Then, we try to construct suitable inequality (show as in (\ref{ineqality for lemma4})) to get that $\textstyle\sum_{n\in\mathbb{N}} \|p_{n-1} \!-\! \tilde{p}_n\|^2 < +\infty$ by using Lemma \ref{lem 2.2, Combettes&Vu}. After that we build up an inequality to assure that the sequence $(x_n)_{n\in\mathbb{N}}$ is $|\cdot|^2$-quasi-Fejer monotone with respect to the  target set $zer(A+B)$ relative to $(U_n^{-1})_{n\in\mathbb{N}}$ and later we obtain that $\textstyle\sum_{n\in\mathbb{N}} \|p_{n-1} \!-\! \tilde{p}_n\|^2 < +\infty$. Therefore, (i) can be shown with the assistance of above bounded summable results; consequently, the quasi-Fejer monotone setting together with Theorem \ref{Thm 3.3 (10)} demonstrates (ii) as required.

Now let us show the whole proof here. It follows from Lemma (\ref{lem3.7(11)}) that the  sequences $(x_n)_{n\in\mathbb{N}}, (y_n)_{n\in\mathbb{N}}, (p_n)_{n\in\mathbb{N}}$ and $(q_n)_{n\in\mathbb{N}}$ are well defined. From (\ref{ConditionA}), we obtain that
\begin{align*}
(\forall x\in\mathcal{H})\;\inner {U_nx}{x} \leq \|U_nx\| \|x\| \leq \|U_n\| \|x\|^2 \leq \mu \|x\|^2 =\inner {\mu x}{x} \;\text{implies that}\; U_n \preccurlyeq \mu Id,
\end{align*}
and since $U_n\in \mathcal{P}_{\alpha}(\mathcal{H})$, then $U_n \succcurlyeq \alpha Id$. Hence we have that  

\begin{align}\label{ConditionAA}
\begin{cases}
\mu Id \succcurlyeq  U_n \succcurlyeq  \alpha Id, \\
\alpha^{-1} Id \succcurlyeq U_n^{-1} \succcurlyeq \mu^{-1} Id, \quad \text{by Lemma \ref{lem2.1(10)}}
\end{cases}
\end{align}

For all $g_n\in\mathcal{H}$, $n\in\mathbb{N}$,
\begin{align*}
\|g_n\|_{U_n^{-1}}=\sqrt{\inner {g_n}{U_n^{-1}g_n}} \leq \sqrt{\inner{g_n}{\alpha^{-1}Id g_n}}=\|g_n\|\sqrt{\frac{1}{\alpha}},
\end{align*}
and
\begin{align*}
\|g_n\|_{U_n^{-1}}=\sqrt{\inner {g_n}{U_n^{-1}g_n}} \geq \sqrt{\inner{g_n}{\mu^{-1}Id g_n}}=\|g_n\|\sqrt{\frac{1}{\mu}},
\end{align*}
Thus we have that $\sqrt{\frac{1}{\mu}}\|g_n\|\leq\|g_n\|_{U_n^{-1}} \leq \|g_n\|\sqrt{\frac{1}{\alpha}}.$
This means that
\begin{align}\label{sum|g|_U^-1 <infty}
\sum\limits_{n\in\mathbb{N}}\|g_n\|<+\infty\quad\Leftrightarrow\quad \sum\limits_{n\in\mathbb{N}}\|g_n\|_{U_n^{-1}} <+\infty.
\end{align}
Similarly, we also have that
\begin{small}
\begin{align*}
(\forall g_n\in\mathcal{H})\quad \sqrt{\alpha}\|g_n\|=\sqrt{\inner{\alpha g_n}{g_n}}\leq \sqrt{\inner{U_n g_n}{g_n}}=\|g_n\|_{U_{n}}=\sqrt{\inner{U_n g_n}{g_n}}\leq\sqrt{\inner{\mu g_n}{g_n}}=\|g_n\|\sqrt{\mu}.
\end{align*}
\end{small}
and then
\begin{align}\label{sum|g|_U <infty}
(\forall g_n\in\mathcal{H})\quad \sum\limits_{n\in\mathbb{N}}\|g_n\|<+\infty\quad\Leftrightarrow\quad \sum\limits_{n\in\mathbb{N}}\|g_n\|_{U_n} <+\infty .
\end{align}
Let us set 
\begin{align}\label{set_para2}
(\forall n\in\mathbb{N})\quad
\begin{cases}
\tilde{y}_n = x_n -\gamma_n U_n B(p_{n-1})\\
\tilde{p}_n = J_{\gamma_n U_n A}(\tilde{y}_n)\\
\tilde{q}_n = \tilde{p}_n-\gamma_nU_nB(\tilde{p}_n)\\
\tilde{x}_{n+1} = x_n -\tilde{y}_n + \tilde{q}_n
\end{cases}
and
\begin{cases}
u_n = \gamma_n^{-1}U_n^{-1}(x_n-\tilde{p}_n)+B(\tilde{p}_n)-B(p_{n-1})\\
e_n = \tilde{x}_{n+1}-x_{n+1} = y_n -q_n -\tilde{y}_n+\tilde{q}_n.\\
\end{cases}
\end{align}
Since $\tilde{p}_n = J_{\gamma_n U_n A}(\tilde{y}_n)$, then $\tilde{y}_n \in \tilde{p}_n + \gamma_n U_n A(\tilde{p}_n)$ and therefore
\begin{align}\label{belongtoA(pn)}
\gamma_n^{-1} U_n^{-1} (\tilde{y}_n-\tilde{p}_n)\in A(\tilde{p}_n).
\end{align}
From (\ref{set_para2}) and (\ref{belongtoA(pn)}), we have that
\begin{align}\label{u_n}
(\forall n\in\mathbb{N})\quad u_n&=\gamma_n^{-1}U_n^{-1}\left(x_n-\gamma_nU_nB(p_{n-1})-\tilde{p}_n\right)+B(p_{n-1})+B(\tilde{p}_n)-B(p_{n-1})\notag \\
&= \gamma_n^{-1}U_n^{-1}(\tilde{y}_n-\tilde{p}_n)+B(\tilde{p}_n)\in A(\tilde{p}_n)+B(\tilde{p}_n)=(A+B)(\tilde{p}_n).
\end{align}
Since for all $ x\in\mathcal{H}$, we observe that 
\begin{align} \label{|U_n(x)|=|x|_(U_n)}
\|U_n x\|_{U_n^{-1}}=\sqrt{\inner{U_n^{-1}U_nx}{U_nx}}=\sqrt{\inner{x}{U_nx}}=\|x\|_{U_n}.
\end{align} 
Applying (\ref{set_para1}), (\ref{set_para2}), (\ref{ConditionAA}), (\ref{|U_n(x)|=|x|_(U_n)}),  Lemma \ref{lem3.7(11)} and the $\beta$-Lipschitz continuity of $B$ yield
\begin{align*} 
\|y_n-\tilde{y}_n\|_{U_n^{-1}}
&=\gamma_n\|U_na_n\|_{U_n^{-1}}
=\gamma_n\|a_n\|_{U_n} \leq \lambda\|a_n\|_{U_n},
\end{align*}
and
\begin{align}\label{|pn-tilde(p)n| bounded from above}
\|p_n-\tilde{p}_n\|_{U_n^{-1}} &= \|J_{\gamma_nU_nA}(y_n)+b_n - J_{\gamma_nU_nA}(\tilde{y}_n)\|_{U_n^{-1}} \notag\\
&\leq \|J_{\gamma_nU_nA}(y_n)-J_{\gamma_nU_nA}(\tilde{y}_n)\|_{U_n^{-1}} +\|b_n\|_{U_n^{-1}} \notag\\
&\leq \|y_n-\tilde{y}_n
\|_{U_n^{-1}}+\|b_n\|_{U_n^{-1}} \notag\\
&\leq \lambda\|a_n\|_{U_n}+\|b_n\|_{U_n^{-1}},
\end{align}
and
\begin{align}\label{|qn-tilde(q)_n}
\|q_n-\tilde{q}_n\|_{U_n^{-1}} &= \|p_n-\gamma_nU_n \left( B(p_n)+c_n\right)-\tilde{p}_n+\gamma_nU_nB(\tilde{p}_n)\|_{U_n^{-1}} \notag\\
&\leq \|p_n-\tilde{p}_n\|_{U_n^{-1}} +\|\gamma_nU_nB(\tilde{p}_n)-\gamma_nU_nB(p_n)\|_{U_n^{-1}}+\gamma_n\|U_n c_n\|_{U_n^{-1}}, 
\end{align}
then, we consider
\begin{small}
\begin{align*}
\|\gamma_nU_n \left(B(\tilde{p}_n)-B(p_n)\right)\|^2_{U_n^{-1}} &=\|\gamma_n\left(B(\tilde{p}_n)-B(p_n)\right)\|^2_{U_n}\\
&=\gamma^2_n\inner{B(\tilde{p}_n)-B(p_n)}{U_nB(\tilde{p}_n)-U_nB(p_n)}\\
&\leq\gamma^2_n\|U_n\|\|B(\tilde{p}_n)-B(p_n)\|^2\\
&\leq \gamma_n^2 \mu\beta^2\|\tilde{p}_n-p_n\|^2 \;[\text{since}\; \mu =\sup_{n\in\mathbb{N}}\|U_n\|<+\infty, \text{$\beta$-Lipschitz continuity of $B$}]\\
&\leq \gamma_n^2 \mu^2 \beta^2\|\tilde{p}_n-p_n\|^2_{U_n^{-1}}\; [ \text{since}\; \mu^{-1}\|g_n\|^2\leq\|g_n\|^2_{U_n^{-1}}, \forall g_n\in\mathcal{H}]\\
&\leq \|\tilde{p}_n-p_n\|^2_{U_n^{-1}}\; \left[ \text{since}\; \gamma_n \leq \lambda  < \frac{1}{ \sqrt{10}\mu\beta}\leq\frac{1}{\beta\mu}\right].
\end{align*}
\end{small}
Then, it follows from (\ref{|U_n(x)|=|x|_(U_n)}), (\ref{|qn-tilde(q)_n}) and (\ref{|pn-tilde(p)n| bounded from above}) that
\begin{small}
\begin{align*}
\|q_n-\tilde{q}_n\|_{U_n^{-1}} &\leq 2\|p_n-\tilde{p}_n\|_{U_n^{-1}}+\gamma_n\|U_n c_n\|_{U_n^{-1}} \notag\\
&\leq 2 \left[ \|b_n\|_{U_n^{-1}}+\lambda\|a_n\|_{U_n} \right]+\gamma_n\|c_n\|_{U_n}\; \notag \\
&\leq 2\left[ \|b_n\|_{U_n^{-1}}+\lambda\|a_n\|_{U_n} \right] + \lambda \|c_n\|_{U_n} \left[\text{since}\; \gamma_n \leq \lambda, \forall n\in\mathbb{N} \right].
\end{align*}
\end{small}

Hence, we have that
\begin{align}\label{|yn-yilde(y)n|,|pn-tilde(p)n|,|qn-tilde(q)n|}
(\forall n\in\mathbb{N})\quad\begin{cases}
\|y_n-\tilde{y}_n\|_{U_n^{-1}}\leq \lambda\|a_n\|_{U_n},\\
\| p_n-\tilde{p}_n\|_{U_n^{-1}}\leq \|b_n\|_{U_n^{-1}}+\lambda\|a_n\|_{U_n},\\
\|q_n -\tilde{q}_n\|_{U_n^{-1}}\leq 2\left[ \|b_n\|_{U_n^{-1}}+\lambda\|a_n\|_{U_n} \right] + \lambda \|c_n\|_{U_n}.
\end{cases}
\end{align}
Since $(a_n)_{n\in\mathbb{N}},(b_n)_{n\in\mathbb{N}}$ and $(c_n)_{n\in\mathbb{N}}$ are absolutely summable sequences in $\mathcal{H}$, we derive from (\ref{sum|g|_U^-1 <infty}), (\ref{sum|g|_U <infty}), (\ref{set_para2}) and (\ref{|yn-yilde(y)n|,|pn-tilde(p)n|,|qn-tilde(q)n|})  that
\begin{align}\label{the sum of |yn-yilde(y)n|,|pn-tilde(p)n|,|qn-tilde(q)n| bounded}
\begin{cases}
\sum\limits_{n\in\mathbb{N}} \|y_n-\tilde{y}_n\| <+\infty \quad &\text{and} \quad \sum\limits_{n\in\mathbb{N}} \|y_n-\tilde{y}_n\|_{U_n^{-1}} <+\infty,\\
\sum\limits_{n\in\mathbb{N}} \|p_n-\tilde{p}_n\| <+\infty \quad &\text{and} \quad \sum\limits_{n\in\mathbb{N}} \|p_n-\tilde{p}_n\|_{U_n^{-1}} <+\infty,\\
\sum\limits_{n\in\mathbb{N}} \|q_n-\tilde{q}_n\| <+\infty \quad &\text{and} \quad \sum\limits_{n\in\mathbb{N}} \|q_n-\tilde{q}_n\|_{U_n^{-1}} <+\infty,\\
\end{cases}
\end{align}
Follows from (\ref{set_para1}), (\ref{set_para2}) and (\ref{|yn-yilde(y)n|,|pn-tilde(p)n|,|qn-tilde(q)n|}), we can derive that
\begin{align}\label{|e_n| bounded}
(\forall n\in\mathbb{N})\; \|e_n\|&=\|\tilde{x}_{n+1}-x_{n+1}\| \notag\\
&\leq\|y_n-q_n-\tilde{y}_n-\tilde{q}_n\| \notag\\
&\leq \|y_n-\tilde{y}_n\|+\|q_n-\tilde{q}_n\| \notag\\
&\leq
\left( \lambda \|a_n\|_{U_n} \right) + 2\left( \|b_n\|_{U_n^{-1}} + \lambda \|a_n\|_{U_n} \right) + \lambda \|c_n\|_{U_n}.
\end{align} 
Since $(a_n)_{n\in\mathbb{N}},(b_n)_{n\in\mathbb{N}}$ and $(c_n)_{n\in\mathbb{N}}$ are absolutely summable sequences in $\mathcal{H}$, we derive from (\ref{sum|g|_U^-1 <infty}), (\ref{sum|g|_U <infty}), (\ref{|e_n|  bounded}) and (\ref{the sum of |yn-yilde(y)n|,|pn-tilde(p)n|,|qn-tilde(q)n| bounded}) that $\sum_{n\in\mathbb{N}}\|e_n\|<+\infty$ and 
\begin{align}\label{sum|e_n| bounded}
\sum\limits_{n\in\mathbb{N}} \|e_n\| < +\infty \Leftrightarrow \sum\limits_{n\in\mathbb{N}} \|e_n\|_{U_n^{-1}} < +\infty \Leftrightarrow \sum\limits_{n\in\mathbb{N}} \|e_n\|_{U_n} < +\infty.
\end{align}

Now, we let $x\in zer(A+B)$.
Then, for every $n\in\mathbb{N}$, $(x,-\gamma_nU_nB(x))\in Gra (\gamma_nU_nA)$ [because  $x\in zer(A+B)\Leftrightarrow 0 \in \gamma_nU_n A(x)+\gamma_n U_n B(x)\Leftrightarrow -\gamma_nU_nB(x)\in \gamma_nU_nA(x)$] and (\ref{set_para2}) yields $(\tilde{p}_n,\tilde{y}_n-\tilde{p}_n)\in Gra (\gamma_nU_nA)$ 
[see, Equation (\ref{belongtoA(pn)})].
Hence by monotonicity of $U_nA$ with respect to the scalar product $\inner{\cdot}{\cdot}_{U_n^{-1}}$ in Lemma \ref{lem3.7(11)} (i), we obtain that
\begin{align*}
\inner{\tilde{p}_n-x}{\tilde{p}_n-\tilde{y}_n-\gamma_nU_nB(x)}_{U_n^{-1}}\leq 0,
\end{align*}
moreover, by monotonicity of $\gamma_nU_nB$ with respect to the scalar product  $\inner{\cdot}{\cdot}_{U_n^{-1}}$, we also have
\begin{align*}
\inner{\tilde{p}_n-x}{\gamma_nU_nB(x)-\gamma_nU_nB(\tilde{p}_n)}_{U_n^{-1}} \leq 0.
\end{align*}
By the last two inequalities, we obtain
\begin{align}\label{<pt_n-x,pt_n-yt_n-gam_nUnB(pt_n)_Un^-1>}
(\forall n\in\mathbb{N})\quad \inner{\tilde{p}_n-x}{\tilde{p}_n-\tilde{y}_n-\gamma_nU_nB(\tilde{p}_n)}_{U_n^{-1}} \leq 0.
\end{align}
In turn, we derive from (\ref{set_para2}) and (\ref{<pt_n-x,pt_n-yt_n-gam_nUnB(pt_n)_Un^-1>}) that
\begin{align}\label{2gamma_n<,>}
(\forall n\in\mathbb{N})\;\; 2\gamma_n\inner{\tilde{p}_n - x}{U_nB(p_{n-1})-U_nB(\tilde{p}_n)}_{U_n^{-1}}&=2\inner{\tilde{p}_n-x}{\tilde{p}_n-\tilde{y}_n-\gamma_nU_nB(\tilde{p}_n)}_{U_n^{-1}}\notag\\&+2\inner{\tilde{p}_n-x}{\gamma_nU_nB(p_{n-1})+\tilde{y}_n-\tilde{p}_n}_{U_n^{-1}}\notag\\
&\leq 2\inner{\tilde{p}_n-x}{\gamma_nU_nB(p_{n-1})+\tilde{y}_n-\tilde{p}_n}_{U_n^{-1}}\notag\\
&= 2\inner{\tilde{p}_n-x}{x_n-\tilde{p}_n}_{U_n^{-1}}\notag\\
&=\|x_n-x\|^2_{U_n^{-1}}-\|\tilde{p}_n-x\|^2_{U_n^{-1}}-\|x_n-\tilde{p}_n\|^2_{U_n^{-1}}.
\end{align}
Next, using (\ref{set_para2}), (\ref{2gamma_n<,>}), (\ref{|U_n(x)|=|x|_(U_n)}) , (\ref{ConditionAA}), the $\beta$-Lipschitz continuity of $B$ and Lemma \ref{lem2.1(10)}, for every $n\in\mathbb{N}$, we obtain
\begin{small}
\begin{align}\label{|tilde(x)_n+1 - x|^2_U_n(-1)}
\|\tilde{x}_{n+1}-x\|^2_{U_n^{-1}}&=\|\tilde{q}_n+x_n-\tilde{y}_n-x\|^2_{U_n^{-1}}\notag\\
&=\|(\tilde{p}_n-x)+\gamma_nU_n(B(p_{n-1})-B(\tilde{p}_n))\|^2_{U_n^{-1}}\notag\\
&=\|\tilde{p}_n-x\|^2_{U_n^{-1}}+2\gamma_n\inner{\tilde{p}_n-x}{U_n(B(p_{n-1})-B(\tilde{p}_n))}_{U_n^{-1}}+\gamma_n^2\|U_n(B(p_{n-1})-B(\tilde{p}_n))\|^2_{U_n^{-1}}\notag\\
&\leq \|\tilde{p}_n-x\|^2_{U_n^{-1}} +\left[ \|x_n-x\|^2_{U_n^{-1}}-\|\tilde{p}_n-x\|^2_{U_n^{-1}}-\|x_n-\tilde{p}_n\|^2_{U_n^{-1}} \right] +\gamma_n^2\|B(p_{n-1})-B(\tilde{p}_n)\|^2_{U_n}\notag\\
&\leq\|x_n-x\|^2_{U_n^{-1}}-\|x_n-\tilde{p}_n\|^2_{U_n^{-1}}+\gamma^2_n\beta^2\mu\|p_{n-1}-\tilde{p}_n\|^2 \; [\text{since}\; \alpha Id \preccurlyeq U_n \preccurlyeq \mu Id \;\text{in}\; (\ref{ConditionAA})]\notag\\
&\leq \|x_n-x\|^2_{U_n^{-1}}-\mu^{-1}\|x_n-\tilde{p}_n\|^2 +\gamma^2_n\beta^2\mu\|p_{n-1}-\tilde{p}_n\|^2 \; [\text{since}\; \alpha^{-1} Id \succcurlyeq U_n^{-1} \succcurlyeq \mu^{-1} Id ].
\end{align}
\end{small}

\noindent By Parallelogram law, we have  $2\|x_n-\tilde{p}_n\|^2+2\|x_n-p_{n-1}\|^2=\|p_{n-1}-\tilde{p}_n\|^2+\|(x_n-\tilde{p}_n)+(x_n-p_{n-1})\|^2$, then
$\|p_{n-1}-\tilde{p}_n\|^2\leq 2\|x_n-\tilde{p}_n\|^2+2\|x_n-p_{n-1}\|^2$ and so $\|x_n-\tilde{p}_{n}\|^2\geq  -\|x_n-p_{n-1}\|^2+\frac{1}{2}\|p_{n-1}-\tilde{p}_n\|^2$. Hence
\begin{align}\label{-|xn-tilde(p)_n|^2}
-\|x_n-\tilde{p}_{n}\|^2\leq  \|x_n-p_{n-1}\|^2-\frac{1}{2}\|p_{n-1}-\tilde{p}_n\|^2.
\end{align}
Now we follows from (\ref{|tilde(x)_n+1 - x|^2_U_n(-1)}) and (\ref{-|xn-tilde(p)_n|^2}) that
\begin{align}
\|\tilde{x}_{n+1}-x\|^2_{U_n^{-1}} &\leq \|x_n-x\|^2_{U_n^{-1}}-\mu^{-1}\|x_n-\tilde{p}_n\|^2 +\gamma^2_n\beta^2\mu\|p_{n-1}-\tilde{p}_n\|^2 \notag\\
&\leq \|x_n-x\|^2_{U_n^{-1}}+\mu^{-1}\left[ \|x_n-p_{n-1} \|^2-\frac{1}{2}\|p_{n-1}-\tilde{p}_n\|^2\right] + \gamma^2_n\beta^2\mu\|p_{n-1}-\tilde{p}_n\|^2 \notag\\
&= \|x_n-x\|^2_{U_n^{-1}}+\mu^{-1} \|x_n-p_{n-1} \|^2 + \left( \gamma_n^2\beta^2\mu - \frac{1}{2\mu}\right) \|p_{n-1}-\tilde{p}_n\|^2.
\end{align} 
Then we obtain that 
\begin{align} \label{Before Telescope 1} 
\|\tilde{x}_{n+1}-x\|^2_{U_n^{-1}}+\left( \frac{1}{2\mu}- \gamma_n^2\beta^2\mu \right) \|p_{n-1}-\tilde{p}_n\|^2 \leq  \|x_n-x\|^2_{U_n^{-1}}+\mu^{-1} \|x_n-p_{n-1} \|^2.  
\end{align}
Since (\ref{set_para1}) gives us that for all $n\in\mathbb{N}$, $x_{n+1} = x_n-y_n+q_n = \gamma_nU_n\left( B(p_{n-1}) + a_n \right) + p_n-\gamma_nU_n\left(B(p_n)+c_n\right)$, then we have $x_n = \gamma_{n-1}U_{n-1}\left( B(p_{n-2}) + a_{n-1} \right) + p_{n-1}-\gamma_{n-1}U_{n-1}\left(B(p_{n-1})+c_{n-1}\right)$. Therefore 
\begin{align}\label{|x_n-p_(n-1)|}
\|x_n-p_{n-1}\| 
&\leq \gamma_{n-1}\|U_{n-1}B(p_{n-2})-U_{n-1}B(p_{n-1})\|+\gamma_{n-1}\|U_{n-1}\left(a_{n-1}-c_{n-1}\right) \|  \notag\\
&\leq \gamma_{n-1} \mu \beta \|p_{n-2}-p_{n-1} \|+\gamma_{n-1} \mu \left( \|a_{n-1}\| + \|c_{n-1}\| \right). 
\end{align}
It follows from (\ref{Before Telescope 1}), (\ref{|x_n-p_(n-1)|}) and Cauchy-Schwarz inequality  that $(\forall n\in\mathbb{N})$,

\begin{footnotesize}
\begin{align}\label{Before Telescope 2 not replace zn yet}
\|\tilde{x}_{n+1}-x\|^2_{U_n^{-1}}+\left( \frac{1}{2\mu}- \gamma_n^2\beta^2\mu \right) \|p_{n-1}-\tilde{p}_n\|^2 
&\leq  \|x_n-x\|^2_{U_n^{-1}}+\mu^{-1} \left[ \gamma_{n-1} \mu\beta \|p_{n-2}-p_{n-1} \|+\gamma_{n-1} \mu \left(\|a_{n-1}\|+\|c_{n-1}\|\right)  \right]^2 \notag\\
&\leq \|x_n-x\|^2_{U_n^{-1}}+\mu^{-1} \left[ 2(\gamma_{n-1}\mu\beta)^2 \|p_{n-2}-p_{n-1}\|^2+2(\gamma_{n-1}\mu)^2(\|a_{n-1}\|+\|c_{n-1}\|)^2 \right] \notag \\
&\leq \|x_n-x\|^2_{U_n^{-1}}+ 2\gamma_{n-1}^2\beta^2\mu \|p_{n-2}-p_{n-1}\|^2 + 2\gamma_{n-1}^2\mu (\|a_{n-1}\|+\|c_{n-1}\|)^2 \notag\\
&\leq \|x_n-x\|^2_{U_n^{-1}}+2 \gamma_{n-1}^2\beta^2\mu \left[2(\|p_{n-2}-\tilde{p}_{n-1}\|^2 +\|\tilde{p}_{n-1}-p_{n-1}\|^2) \right
]\notag\\
&+2  \gamma_{n-1}^2\mu (\|a_{n-1}\|+\|c_{n-1}\|)^2 \notag\\
&\leq \|x_n-x\|^2_{U_n^{-1}}+4\gamma_{n-1}^{2}\mu\beta^2\|p_{n-2}-\tilde{p}_{n-1}\|^2+4\gamma_{n-1}^{2}\mu\beta^2\|\tilde{p}_{n-1}-p_{n-1}\|^2 \notag \\
&+2\gamma_{n-1}^2\mu(\|a_{n-1}\|+\|c_{n-1}\|)^2.
\end{align}
\end{footnotesize}
Let $z_n= \tilde{x}_{n+1}-x = x_n-\tilde{y}_n+\tilde{q}_n-x$ and $M_n = \frac{1}{2\mu}-\gamma^2_n\beta^2\mu >0$ (Since $\gamma_n < \frac{1}{\sqrt{2}\beta\mu},\;\forall n\in\mathbb{N}$), then we derive from (\ref{Before Telescope 2 not replace zn yet}) that $(\forall n\in\mathbb{N})$,
\begin{align}\label{|zn|+Mn|p_(n-1)-tilde(p)_n|}
\|z_n\|^2_{U_n^{-1}} + M_n\|p_{n-1}-\tilde{p}_n\|^2 &= \|\tilde{x}_{n+1}-x\|^2_{U_n^{-1}}+\left( \frac{1}{2\mu}- \gamma_n^2\beta^2\mu \right) \|p_{n-1}-\tilde{p}_n\|^2 \notag\\
&\leq \|x_n-x\|^2_{U_n^{-1}}+4\gamma_{n-1}^{2}\mu\beta^2\|p_{n-2}-\tilde{p}_{n-1}\|^2 \notag \\
&+4\gamma_{n-1}^{2}\mu\beta^2 \|\tilde{p}_{n-1}-p_{n-1}\|^2+2\gamma_{n-1}^2\mu(\|a_{n-1}\|+\|c_{n-1}\|)^2.
\end{align}
Applying (\ref{set_para1}) and (\ref{set_para2}), we obtain that $x_{n+1}-x = \tilde{x}_{n+1}- x - \tilde{x}_{n+1}+x_{n+1}= z_n -e_n$.
Then we get that
\begin{align}\label{|x_n+1-x|_U_n+1^-1}
\|x_{n+1}-x\|_{U_{n}^{-1}}^2 = \|z_n\|_{U_{n}^{-1}}^2 -2\inner{z_n}{e_n}_{U_{n}^{-1}} + \|e_n\|_{U_{n}^{-1}}^2.
\end{align}
From (\ref{ConditionA}), we know that $(\forall n\in N) \; (1+\eta_n)U_{n+1} \succcurlyeq U_n$. It follows from (\ref{|x_n+1-x|_U_n+1^-1}) that  
\begin{align}\label{square of |x_n+1-x|_U_n+1^-1}
\|x_{n+1}-x\|_{U_{n+1}^{-1}}^2 
&\leq (1+\eta_n)\|x_{n+1}-x\|_{U_n^{-1}}^2 \notag\\
&=(1+\eta_n) \left(\|z_n\|_{U_{n}^{-1}}^2 -2\inner{z_n}{e_n}_{U_{n}^{-1}} + \|e_n\|_{U_{n }^{-1}}^2\right).
\end{align}
By using (\ref{square of |x_n+1-x|_U_n+1^-1}) and  (\ref{|zn|+Mn|p_(n-1)-tilde(p)_n|}) yield
\begin{align}\label{|x_n+1 - x|^2_n+1 + Mm|p_(n-1)-tilde(p)_n|^2< something}
\|x_{n+1}-x\|_{U_{n+1}^{-1}}^2+ M_n\|p_{n-1}-\tilde{p}_n\|^2 
&\leq \left(\|z_n\|_{U_{n}^{-1}}^2 +  M_n\|p_{n-1}-\tilde{p}_n\|^2\right)+ \eta_n \|z_n\|_{U_{n}^{-1}}^2 \notag\\
&+(1+\eta_n) (-2\inner{z_n}{e_n}_{U_{n}^{-1}}) +(1+\eta_n) \|e_n\|_{U_{n}^{-1}}^2 \notag\\
&\leq \big[ \|x_n-x\|^2_{U_n^{-1}}+4\gamma_{n-1}^{2}\mu\beta^2\|p_{n-2}-\tilde{p}_{n-1}\|^2 
\notag \\
&+4\gamma_{n-1}^{2}\mu\beta^2\|\tilde{p}_{n-1}-p_{n-1}\|^2+2\gamma_{n-1}^2\mu(\|a_{n-1}\|+\|c_{n-1}\|)^2 \big]
\notag \\
&+\eta_n \big[ \|x_n-x\|^2_{U_n^{-1}}+4\gamma_{n-1}^{2}\mu\beta^2\|p_{n-2}-\tilde{p}_{n-1}\|^2 
\notag\\
&+4\gamma_{n-1}^{2}\mu\beta^2\|\tilde{p}_{n-1}-p_{n-1}\|^2+2\gamma_{n-1}^2\mu(\|a_{n-1}\|+\|c_{n-1}\|)^2\big]
\notag\\
&+(1+\eta_n) (-2\inner{z_n}{e_n}_{U_{n}^{-1}}) +(1+\eta_n) \|e_n\|_{U_{n}^{-1}}^2 
\notag\\
&=(1+\eta_n)\|x_n-x\|^2_{U_n^{-1}}+(1+\eta_n)4\gamma_{n-1}^{2}\mu\beta^2\|p_{n-2}-\tilde{p}_{n-1}\|^2
\notag\\
&+(1+\eta_n)4\gamma_{n-1}^{2}\mu\beta^2\|\tilde{p}_{n-1}-p_{n-1}\|^2
\notag\\
&+(1+\eta_n)2\gamma_{n-1}^2\mu(\|a_{n-1}\|+\|c_{n-1}\|)^2
\notag\\
&+(1+\eta_n) (-2\inner{z_n}{e_n}_{U_{n}^{-1}}) +(1+\eta_n) \|e_n\|_{U_{n}^{-1}}^2.  
\end{align}
Now, from (\ref{|zn|+Mn|p_(n-1)-tilde(p)_n|}), we obtain $(\forall n\in\mathbb{N})$,
\begin{align}\label{-2<zn,en>}
-2\inner{z_n}{e_n}_{U_n^{-1}} &\leq 2\|z_n\|_{U_{n}^{-1}} \|e_n\|_{U_{n}^{-1}} \notag\\
&\leq(\|z_n\|_{U_{n}^{-1}}^2+1) \|e_n\|_{U_{n}^{-1}} \notag \\
&\leq \big[ \|x_n-x\|^2_{U_n^{-1}}+4\gamma_{n-1}^{2}\mu\beta^2\|p_{n-2}-\tilde{p}_{n-1}\|^2 
+4\gamma_{n-1}^{2}\mu\beta^2\|\tilde{p}_{n-1}-p_{n-1}\|^2\notag \\&+2\gamma_{n-1}^2\mu(\|a_{n-1}\|+\|c_{n-1}\|)^2 +1 \big] \|e_n\|_{U_{n}^{-1}} 
\notag \\
&=\|e_n\|_{U_{n}^{-1}} \|x_n-x\|^2_{U_n^{-1}}+\|e_n\|_{U_{n}^{-1}} 4\gamma_{n-1}^{2}\mu\beta^2\|p_{n-2}-\tilde{p}_{n-1}\|^2 
\notag \\
&+\|e_n\|_{U_{n}^{-1}} 4\gamma_{n-1}^{2}\mu\beta^2\|\tilde{p}_{n-1}-p_{n-1}\|^2
\notag \\&+\|e_n\|_{U_{n}^{-1}} 2\gamma_{n-1}^2\mu(\|a_{n-1}\|+\|c_{n-1}\|)^2 +\|e_n\|_{U_{n}^{-1}}. 
\end{align}
Consider (\ref{|x_n+1 - x|^2_n+1 + Mm|p_(n-1)-tilde(p)_n|^2< something}) together with (\ref{-2<zn,en>}), then $(\forall n\in\mathbb{N})$,
\begin{small}
\begin{align}\label{Before Telescope 3 before set Dn}
 \|x_{n+1}-x\|^2_{U_{n+1}^{-1}} + M_n\|p_{n-1}-\tilde{p}_n\|^2 
&\leq (1+\eta_n)\|x_n-x\|^2_{U_n^{-1}}+(1+\eta_n)4\gamma_{n-1}^{2}\mu\beta^2\|p_{n-2}-\tilde{p}_{n-1}\|^2
\notag\\
&+(1+\eta_n)4\gamma_{n-1}^{2}\mu\beta^2\|\tilde{p}_{n-1}-p_{n-1}\|^2
\notag\\
&+(1+\eta_n)2\gamma_{n-1}^2\mu(\|a_{n-1}\|+\|c_{n-1}\|)^2
\notag\\
&+(1+\eta_n) \bigg[ \|e_n\|_{U_{n}^{-1}} \|x_n-x\|^2_{U_n^{-1}}+\|e_n\|_{U_{n}^{-1}} 4\gamma_{n-1}^{2}\mu\beta^2\|p_{n-2}-\tilde{p}_{n-1}\|^2 
\notag \\
&+\|e_n\|_{U_{n}^{-1}} 4\gamma_{n-1}^{2}\mu\beta^2\|\tilde{p}_{n-1}-p_{n-1}\|^2
\notag \\
&+\|e_n\|_{U_{n}^{-1}} 2\gamma_{n-1}^2\mu (\|a_{n-1}\|+\|c_{n-1}\|)^2 +\|e_n\|_{U_{n}^{-1}} 
\bigg]
\notag \\
&+(1+\eta_n) \|e_n\|_{U_{n}^{-1}}^2
\notag \\
&= (1+\eta_n) \bigg[ \|x_n-x\|^2_{U_n^{-1}}+4\gamma_{n-1}^{2}\mu\beta^2\|p_{n-2}-\tilde{p}_{n-1}\|^2
\notag\\
&+4\gamma_{n-1}^{2}\mu\beta^2\|\tilde{p}_{n-1}-p_{n-1}\|^2
+2\gamma_{n-1}^2\mu(\|a_{n-1}\|+\|c_{n-1}\|)^2 \bigg]
\notag\\
&+(1+\eta_n)\|e_n\|_{U_{n}^{-1}} \bigg[ \|x_n-x\|^2_{U_n^{-1}}+ 4\gamma_{n-1}^{2}\mu\|p_{n-2}-\tilde{p}_{n-1}\|^2 
\notag \\
&+4\gamma_{n-1}^{2}\mu\beta^2\|\tilde{p}_{n-1}-p_{n-1}\|^2
+ 2\gamma_{n-1}^2\mu(\|a_{n-1}\|+\|c_{n-1}\|)^2 +1
\bigg]
\notag \\
&+(1+\eta_n) \|e_n\|_{U_{n}^{-1}}^2
\notag \\
&=\left[ \left(1+\eta_n\right)\left(1+\|e_n\|_{U_{n}^{-1}}\right)  \right ]   \left[ \|x_n-x\|^2_{U_n^{-1}}+4\gamma_{n-1}^{2}\mu\beta^2\|p_{n-2}-\tilde{p}_{n-1}\|^2  \right ]
\notag\\
&+\left[ \left(1+\eta_n\right)\left(1+\|e_n\|_{U_{n}^{-1}}\right)  \right ]  \left[ 4\gamma_{n-1}^{2}\mu\beta^2\|\tilde{p}_{n-1}-p_{n-1}\|^2 \right ]
\notag \\
&+\left[ \left(1+\eta_n\right)\left(1+\|e_n\|_{U_{n}^{-1}}\right)  \right ]  \left[ 2\gamma_{n-1}^2\mu(\|a_{n-1}\|+\|c_{n-1}\|)^2 \right ]
\notag \\
&+2(1+\eta_n) \|e_n\|_{U_{n}^{-1}}^2
\notag \\
&=\left[ \!1\!+\!\left(\eta_n\!+\!\|e_n\|_{U_{n}^{-1}}\!+\!\eta_n \|e_n\|_{U_{n}^{-1}}\right)  \right ]   \left[\! \|x_n\!-\!x\|^2_{U_n^{-1}}\!+\!4\gamma_{n-1}^{2}\mu\beta^2\|p_{n-2}\!-\!\tilde{p}_{n-1}\|^2\!  \right ]
\notag\\
&+\left[ 1+\left(\eta_n+\|e_n\|_{U_{n}^{-1}}+\eta_n \|e_n\|_{U_{n}^{-1}}\right)  \right ]  \left[ 4\gamma_{n-1}^{2}\mu\beta^2\|\tilde{p}_{n-1}-p_{n-1}\|^2 \right ]
\notag \\
&+\left[ 1+\left(\eta_n+\|e_n\|_{U_{n}^{-1}}+\eta_n \|e_n\|_{U_{n}^{-1}}\right)  \right ]  \left[ 2\gamma_{n-1}^2\mu(\|a_{n-1}\|+\|c_{n-1}\|)^2 \right ]
\notag \\
&+2(1+\eta_n) \|e_n\|_{U_{n}^{-1}}^2.
\end{align}
\end{small}
Because $(\forall n\in\mathbb{N})\; \gamma_n\leq\lambda < \frac{1}{\sqrt{10}\mu\beta}$ $\left(<\frac{1}{\sqrt{2}\beta\mu}\right)$.
Then $\lambda^2 <\frac{1}{10\mu^2\beta^2}$ and so $5\lambda^2\mu\beta^2 < \frac{1}{2\mu}$. Thus $4\gamma^2_{n-1}\mu\beta^2+\gamma^2_{n-1}\mu\beta^2 < 4\lambda^2\mu\beta^2+\lambda^2\mu\beta^2 < \frac{1}{2\mu} $  $\left(\;\text{or}\; 4\gamma^2_{n-1}\mu\beta^2+\gamma^2_{n}\mu\beta^2 < 4\lambda^2\mu\beta^2+\lambda^2\mu\beta^2 < \frac{1}{2\mu} \right)$. Therefore $4\gamma^2_{n-1}\mu\beta^2 < \frac{1}{2\mu}-\gamma^2_{n-1}\mu\beta^2= M_{n-1}. $ 

\noindent From (\ref{Before Telescope 3 before set Dn}), we let $D_n= \|x_{n+1}-x\|_{U_{n+1}^{-1}}^2 + M_n\|p_{n-1}-\tilde{p}_n\|^2$ and $D_{n-1} = \|x_{n}-x\|_{U_{n}^{-1}}^2 + M_{n-1}\|p_{n-2}-\tilde{p}_{n-1}\|^2$, then we have $(\forall n\in\mathbb{N})$
\begin{align}\label{D_n}
D_n &= \|x_{n+1}-x\|_{U_{n+1}^{-1}}^2 + M_n\|p_{n-1}-\tilde{p}_n\|^2 
\notag \\
&\leq \left[ \!1\!+\!\left(\eta_n\!+\!\|e_n\|_{U_{n}^{-1}}\!+\!\eta_n \|e_n\|_{U_{n}^{-1}}\right)  \right ]   \left[\! \|x_n\!-\!x\|^2_{U_n^{-1}}\!+\!4\gamma_{n-1}^{2}\mu\beta^2\|p_{n-2}\!-\!\tilde{p}_{n-1}\|^2\!  \right ]
\notag\\
&+\left[ 1+\left(\eta_n+\|e_n\|_{U_{n}^{-1}}+\eta_n \|e_n\|_{U_{n}^{-1}}\right)  \right ]  \left[ 4\gamma_{n-1}^{2}\mu\beta^2\|\tilde{p}_{n-1}-p_{n-1}\|^2 \right ]
\notag \\
&+\left[ 1+\left(\eta_n+\|e_n\|_{U_{n}^{-1}}+\eta_n \|e_n\|_{U_{n}^{-1}}\right)  \right ]  \left[ 2\gamma_{n-1}^2\mu(\|a_{n-1}\|+\|c_{n-1}\|)^2 \right ]
\notag \\
&+2(1+\eta_n) \|e_n\|_{U_{n}^{-1}}^2
\notag\\
&\leq \left( 1+ \tilde{\eta}_n \right) D_{n-1} +E_n,
\end{align}

where  $\tilde{\eta}_n = \eta_n+\|e_n\|_{U_{n}^{-1}}+\eta_n \|e_n\|_{U_{n}^{-1}}$, 
\begin{fleqn}[\parindent]
\begin{align}
\mbox{and}\; E_n &= \left[ 1+\tilde{\eta}_n\right ]  \left[ 4\gamma_{n-1}^{2}\mu\beta^2\|\tilde{p}_{n-1}-p_{n-1}\|^2 \right ]
\notag \\
&+\left[ 1+\tilde{\eta}_n  \right ]  \left[ 2\gamma_{n-1}^2\mu(\|a_{n-1}\|+\|c_{n-1}\|)^2 \right ]
\notag \\
&+2(1+\eta_n) \|e_n\|_{U_{n}^{-1}}^2.
\end{align}
\end{fleqn}
Since $(a_n)_{n\in\mathbb{N}},(b_n)_{n\in\mathbb{N}}$ and $(c_n)_{n\in\mathbb{N}}$ are absolutely summable sequences in $\mathcal{H}$, (\ref{the sum of |yn-yilde(y)n|,|pn-tilde(p)n|,|qn-tilde(q)n| bounded}), (\ref{|e_n| bounded}), (\ref{sum|e_n| bounded}), $(\gamma_n)_{n\in\mathbb{N}}$ is bounded and $\eta_n\in \ell_{+}^1(\mathbb{N})$, then we can conclude that
\begin{align}\label{tilde(eta_n) and sum of En bounded}
\tilde{\eta}_n\in \ell_{+}^1(\mathbb{N})  \quad \mbox{and} \quad \sum\limits_{n\in\mathbb{N}} E_n<+\infty.
\end{align} 
From (\ref{D_n}) and (\ref{tilde(eta_n) and sum of En bounded}), we know that 
\begin{align} \label{ineqality for lemma4}
D_n\leq(1+\tilde{\eta}_n) D_{n-1} +E_n\quad \mbox{with}\quad  \tilde{\eta}_n\in\ell_{+}^{1}(\mathbb{N}),\; \sum\limits_{n\in\mathbb{N}}E_n<+\infty.  
\end{align}
Applying Lemma \ref{lem 2.2, Combettes&Vu}, we have that $(D_n)_{n\in\mathbb{N}}$ converges.
This means that $(D_n)_{n\in\mathbb{N}}$ bounded and therefore $(\|x_{n+1}-x\|_{U_{n+1}^{-1}}^2)_{n\in\mathbb{N}}$ and $(M_n \|p_{n-1}-\tilde{p}_n\|^2)_{n\in\mathbb{N}}$ are bounded. Since $M_n=\frac{1}{2\mu}-\gamma_n^2\beta^2\mu\geq \frac{1}{2\mu}-\lambda^2\beta^2\mu $ and $\lambda < \frac{1}{\sqrt{10}\mu\beta} < \frac{1}{\sqrt{2}\beta\mu}$, then $\liminf\limits_{n\rightarrow\infty} M_n> 0$. Therefore we also have that $(\|p_{n-1}-\tilde{p}_n\|^2)_{n\in\mathbb{N}}$ is bounded. Consequently, there are $\theta$ and $\zeta$ in $\mathbb{R}$ such that $\theta=\sup\limits_{n\in\mathbb{N}}\|x_n-x\|^2_{U^{-1}_n}$ and $\zeta=\sup\limits_{n\in\mathbb{N}}\|p_{n-1}-\tilde{p}_n\|^2$, respectively. Now, consider (\ref{D_n}) again that $(\forall n\in\mathbb{N})$
\begin{align}\label{Before Telescope 4}
\|x_{n+1}-x\|_{U_{n+1}^{-1}}^2 + M_n\|p_{n-1}-\tilde{p}_n\|^2 &\leq  \left( 1+ \tilde{\eta}_n \right)  \left[ \|x_n- x\|^2_{U_n^{-1}} +4\gamma_{n-1}^{2}\mu\beta^2\|p_{n-2}-\tilde{p}_{n-1}\|^2  \right ] +E_n
\notag\\
&= \|x_n- x\|^2_{U_n^{-1}} +4\gamma_{n-1}^{2}\mu\beta^2\|p_{n-2}-\tilde{p}_{n-1}\|^2
\notag\\
&+ \tilde{\eta}_n \|x_n- x\|^2_{U_n^{-1}} +\tilde{\eta}_n 4\gamma_{n-1}^{2}\mu\beta^2\|p_{n-2}-\tilde{p}_{n-1}\|^2   +E_n
\notag\\
&\leq \|x_n- x\|^2_{U_n^{-1}} +4\gamma_{n-1}^{2}\mu\beta^2\|p_{n-2}-\tilde{p}_{n-1}\|^2
\notag\\
&+ \tilde{\eta}_n \theta +\tilde{\eta}_n 4\gamma_{n-1}^{2}\mu\beta^2\zeta   +E_n.
\end{align}
For convenience, we let $\tilde{M}_{n-1} = 4\gamma_{n-1}^{2}\mu\beta^2$ and we will show that $\liminf\limits_{n\rightarrow +\infty}( M_n-\tilde{M}_{n}) >0$. Since $(\forall n\in\mathbb{N}) \; \gamma_n\leq\lambda <\frac{1}{\sqrt{10}\mu\beta}$ which implies that $4\gamma_n^2\mu\beta^2+\gamma_n^2\mu\beta^2\leq 4\lambda^2\mu\beta^2+\lambda^2\mu\beta^2<\frac{1}{2\mu}$. Hence $\limsup\limits_{n\rightarrow +\infty}\left(4\gamma_n^2\mu\beta^2+\gamma_n^2\mu\beta^2\right) < \frac{1}{2\mu}$ and so $\liminf\limits_{n\rightarrow + \infty} \left[ \frac{1}{2\mu} -\left( 4\gamma_n^2\mu\beta^2+\gamma_n^2\mu\beta^2 \right)\right] >0$, this means that $\frac{1}{2\mu}- \left( 4\gamma_n^2\mu\beta^2 +\gamma_n^2\mu\beta^2 \right) > \epsilon > 0$ for some $\epsilon\in\mathbb{R}$ or equivalently to $M_n-\tilde{M}_n = \left[\frac{1}{2\mu} - \gamma_n^2\mu\beta^2\right]-\left[  4\gamma_n^2\mu\beta^2 \right]= \frac{1}{2\mu}- \left( 4\gamma_n^2\mu\beta^2+\gamma_n^2\mu\beta^2 \right) > \epsilon > 0$ for some $\epsilon\in\mathbb{R}$. Then it follows from (\ref{Before Telescope 4}) that
\begin{align}\label{For Telescoping}
\|x_{n+1}-x\|_{U_{n+1}^{-1}}^2 + \left(M_n-\tilde{M}_n \right)\|p_{n-1}-\tilde{p}_n\|^2 + \tilde{M}_n \|p_{n-1}-\tilde{p}_n\|^2&\leq  \|x_n- x\|^2_{U_n^{-1}} 
\notag\\
&+4\gamma_{n-1}^{2}\mu\beta^2\|p_{n-2}-\tilde{p}_{n-1}\|^2
\notag\\
&+ \tilde{\eta}_n \theta +\tilde{\eta}_n 4\gamma_{n-1}^{2}\mu\beta^2\zeta   +E_n
\notag\\
&= \|x_n- x\|^2_{U_n^{-1}} 
\notag\\
&+\tilde{M}_{n-1}\|p_{n-2}-\tilde{p}_{n-1}\|^2
\notag\\
&+ \tilde{\eta}_n \theta +\tilde{\eta}_n \tilde{M}_{n-1}\zeta   +E_n.
 \end{align}

\noindent We apply Lemma \ref{lem 3.1 (7)}(iii) with the setting of $\chi =1$, 
$\alpha_n=  \|x_n- x\|^2_{U_n^{-1}} +\tilde{M}_{n-1}\|p_{n-2}-\tilde{p}_{n-1}\|^2 $ , $\beta_n= \left(M_n-\tilde{M}_n \right)\|p_{n-1}-\tilde{p}_n\|^2$, $\epsilon_n=  \tilde{\eta}_n \theta +\tilde{\eta}_n \tilde{M}_{n-1}\zeta   +E_n$. It follows from (\ref{tilde(eta_n) and sum of En bounded}), the fact that  $(\forall n\in\mathbb{N}),\; M_n, \tilde{M}_n$ are bounded $\left( \mbox{since } M_n = \frac{1}{2\mu}-\gamma^2_n\beta^2\mu, \tilde{M}_n = 4\gamma_n^2\mu(1+\beta^2)\right)$, and $ M_n-\tilde{M}_n > \epsilon>0$ for some $\epsilon\in\mathbb{R}$, that is $\sum\limits_{n\in\mathbb{N}} \left(M_n-\tilde{M}_n \right)\|p_{n-1} \!-\! \tilde{p}_n\|^2 < +\infty$ and so
\begin{align}\label{sum of |p_(n-1)-tilda(p)n|^2 is bounded}
\sum\limits_{n\in\mathbb{N}} \|p_{n-1} \!-\! \tilde{p}_n\|^2 < +\infty.
\end{align}

\noindent It follows from (\ref{ConditionA}), (\ref{|tilde(x)_n+1 - x|^2_U_n(-1)}), Lemma \ref{lem2.1(10)}, (\ref{sum of |p_(n-1)-tilda(p)n|^2 is bounded}), $(\gamma_n)_{n\in\mathbb{N}}\leq \lambda$, and $(\eta_n)$, $(x_n)_{n\in\mathbb{N}}$ are bounded that
\begin{small}
\begin{align}\label{B1}
(\forall n\in\mathbb{N})\; \|z_n\|^2_{U_{n+1}^{-1}}=\|\tilde{x}_{n+1}-x\|^2_{U_{n+1}^{-1}}&\leq(1+\eta_n)\|\tilde{x}_{n+1}-x\|^2_{U_{n}^{-1}}\notag\\
&\leq (1+\eta_n)\left[ \|x_n-x\|^2_{U_n^{-1}}-\mu^{-1}\|x_n-\tilde{p}_n\|^2 +\gamma^2_n\beta^2\mu\|p_{n-1}-\tilde{p}_n\|^2  \right] \notag\\
&\leq (1+\eta_n) \|x_n-x\|^2_{U_n^{-1}}+(1+\eta_n)\gamma^2_n\beta^2\mu\|p_{n-1}-\tilde{p}_n\|^2. \notag\\
&=  \|x_n-x\|^2_{U_n^{-1}} + \eta_{n} \|x_n-x\|^2_{U_n^{-1}}+ \gamma^2_n\beta\mu\|p_{n-1}-\tilde{p}_n\|^2 + \eta_n \gamma^2_n\beta\mu\|p_{n-1}-\tilde{p}_n\|^2,
\end{align}
\end{small}
hence $\sup\limits_{n\in\mathbb{N}}\|z_n\|^2_{U_{n+1}^{-1}}<+\infty$.
It follows from $x_{n+1} - x = z_n - e_n$ and  (\ref{B1}) that
\begin{align}\label{quasi-Fejer monotone}
(\forall n\in \mathbb{N})\; \|x_{n+1}-x\|_{U_{n+1}^{-1}}^2 &= \|z_n\|_{U_{n+1}^{-1}}^2 -2\inner{z_n}{e_n}_{U_{n+1}^{-1}} + \|e_n\|_{U_{n+1}^{-1}}^2
\notag\\
&\leq \left[ (1+\eta_n)\left( \|x_n-x\|^2_{U_n^{-1}}-\mu^{-1}\|x_n-\tilde{p}_n\|^2 +\gamma^2_n\beta^2\mu\|p_{n-1}-\tilde{p}_n\|^2  \right)  \right] 
\notag\\
&+ 2\|z_n\|_{U_{n+1}^{-1}} \|e_n\|_{U_{n+1}^{-1}} + \|e_n\|_{U_{n+1}^{-1}}^2
\notag\\
&\leq (1+\eta_n) \|x_n-x\|^2_{U_n^{-1}} - (1+\eta_n) \mu^{-1}\|x_n-\tilde{p}_n\|^2 + \tilde{E}_n
\notag\\
&\leq  (1+\eta_n) \|x_n-x\|^2_{U_n^{-1}} + \tilde{E}_n,
\end{align}
where $\tilde{E}_n = \gamma^2_n\beta^2\mu\|p_{n-1}-\tilde{p}_n\|^2  + \eta_n \gamma^2_n\beta^2\mu\|p_{n-1}-\tilde{p}_n\|^2 + 2\|z_n\|_{U_{n+1}^{-1}} \|e_n\|_{U_{n+1}^{-1}} + \|e_n\|_{U_{n+1}^{-1}}^2$, in which $\sum\limits_{n\in\mathbb{N}} \tilde{E}_n < +\infty$, because (\ref{sum|e_n| bounded}), (\ref{sum of |p_(n-1)-tilda(p)n|^2 is bounded}), (\ref{B1}), $\beta\in (0,+\infty)$, $(\gamma_n)_{n\in\mathbb{N}}< \lambda$ and $\eta_n\in\ell_{+}^1(\mathbb{N})$.

The inequality (\ref{quasi-Fejer monotone}) shows that $(x_n)_{n\in\mathbb{N}}$ is $|\cdot|^2$- quasi-Fejer monotone with respect to the target set $zer(A+B)$ relative to $(U_{n}^{-1})_{n\in\mathbb{N}}$. Moreover, by Proposition \ref{Prop 3.2 (10)}, $(\|x_n-x\|_{U_n^{-1}})_{n\in\mathbb{N}}$ is bounded.

\noindent It follows from (\ref{|x_n-p_(n-1)|}) and (\ref{sum of |p_(n-1)-tilda(p)n|^2 is bounded}) that 
\begin{align*}
\|x_n - \tilde{p}_n \| ^2 &\leq  2\|x_n - p_{n-1}\|^2 + 2 \| p_{n-1}-\tilde{p}_n\|^2 \notag\\
&\leq 2\left[\gamma_{n-1} \mu \beta \|p_{n-2}-p_{n-1} \|+\gamma_{n-1} \mu \left( \|a_{n-1}\| + \|c_{n-1}\| \right)\right]^2 + 2 \| p_{n-1}-\tilde{p}_n\|^2\\
&\leq 2\left( 2 \left( \gamma^2_{n-1} \mu^2 \beta^2 \left( \|p_{n-2} - \tilde{p}_{n-1}\| + \|\tilde{p}_{n-1}-p_{n-1}\| \right)^2  +\gamma^2_{n-1} \mu^2 \left( \|a_{n-1}\| + \|c_{n-1}\| \right)^2\right)\right) \\
&+ 2 \| p_{n-1}-\tilde{p}_n\|^2\\
&\leq 4\left( 2\gamma^2_{n-1} \mu^2 \beta^2 \left( \|p_{n-2} - \tilde{p}_{n-1}\|^2 + \|\tilde{p}_{n-1}-p_{n-1}\|^2 \right) \right)  +2  \gamma^2_{n-1} \mu^2  2\left( \|a_{n-1}\|^2 + \|c_{n-1}\|^2 \right)\\
&+ 2 \| p_{n-1}-\tilde{p}_n\|^2\\
&\leq 8 \gamma^2_{n-1} \mu^2 \beta^2 \left( \|p_{n-2} - \tilde{p}_{n-1}\|^2 + \|\tilde{p}_{n-1}-p_{n-1}\|^2 \right) + 4\gamma^2_{n-1} \mu^2  \left( \|a_{n-1}\|^2 + \|c_{n-1}\|^2 \right) \\
&+ 2 \| p_{n-1}-\tilde{p}_n\|^2
\end{align*}
and therefore (see (\ref{|yn-yilde(y)n|,|pn-tilde(p)n|,|qn-tilde(q)n|}))
\begin{equation}\label{sum |xn- tilde(p)_n| bounded_edit}
\sum\limits_{n\in\mathbb{N}}\|x_n-\tilde{p}_n\|^2 < +\infty.
\end{equation}

(i) It follows from (\ref{sum |xn- tilde(p)_n| bounded_edit}) and (\ref{the sum of |yn-yilde(y)n|,|pn-tilde(p)n|,|qn-tilde(q)n| bounded}) that
\begin{align}
\sum\limits_{n\in\mathbb{N}} \|x_{n} - {p}_n\|^2 \leq 2 \sum\limits_{n\in\mathbb{N}} \|x_{n} - \tilde{p}_n\|^2 + 2 \sum\limits_{n\in\mathbb{N}} \|p_{n} - \tilde{p}_n\|^2 < +\infty
\end{align}
Futhermore, we can derive form  (\ref{set_para2}), (\ref{the sum of |yn-yilde(y)n|,|pn-tilde(p)n|,|qn-tilde(q)n| bounded}), (\ref{sum|e_n| bounded}), (\ref{sum of |p_(n-1)-tilda(p)n|^2 is bounded}) and (\ref{sum |xn- tilde(p)_n| bounded_edit}) that
\begin{align}
\sum\limits_{n\in\mathbb{N}} \|y_{n} - {q}_n\|^2 &\leq \sum\limits_{n\in\mathbb{N}} \|\tilde{q}_n - \tilde{y}_n + q_{n} -\tilde{q}_n + \tilde{y}_n - {y}_n\|^2 \notag\\
&=\sum\limits_{n\in\mathbb{N}} \|\tilde{q}_n - \tilde{y}_n - e_n\|^2 \notag\\
&= \sum\limits_{n\in\mathbb{N}} \|\tilde{p}_n -\gamma_nU_nB(\tilde{p}_n) - \left(x_n-\gamma_nU_nB(p_{n-1})\right) - e_n\|^2\notag\\
&= \sum\limits_{n\in\mathbb{N}} \|\tilde{p}_n -x_n + \gamma_nU_n \left(B(p_{n-1}) - B(\tilde{p}_{n})\right) - e_n\|^2\notag\\
&\leq 3 \sum\limits_{n\in\mathbb{N}} \left(   \|\tilde{p}_n -x_n\|^2 + \gamma_n^2 \|U_n\|^2 \|B(p_{n-1}) - B(\tilde{p}_{n})\|^2 + \|e_n\|^2 \right)\notag\\
&\leq 3 \sum\limits_{n\in\mathbb{N}} \left(   \|\tilde{p}_n -x_n\|^2 + \gamma_n^2 \mu^2 \beta^2\|p_{n-1} - \tilde{p}_{n}\|^2 + \|e_n\|^2 \right)\notag\\
&< +\infty
\end{align}

(ii) We want to show that $x_n\rightharpoonup \bar{x}$ and $p_n\rightharpoonup \bar{x}$ for some $\bar{x}\in zer(A+B)$. Let $x$ be a weak cluster point of $(x_n)_{n\in\mathbb{N}}$. Then there exists a subsequence $(x_{k_n})_{n\in\mathbb{N}}$ that converges weakly to $x$. By (\ref{sum |xn- tilde(p)_n| bounded_edit}), we know that $\sum\limits_{n\in\mathbb{N}} \|x_{n} - \tilde{p}_n\|^2 < +\infty$, then $\tilde{p}_{k_n}\rightharpoonup x$. Furthermore, it follows from (\ref{set_para2}), (\ref{u_n}), (\ref{sum of |p_(n-1)-tilda(p)n|^2 is bounded}), (\ref{sum |xn- tilde(p)_n| bounded_edit}) and $\liminf\limits_{n\rightarrow\infty} \gamma_n > 0$ that $u_{k_n} =\gamma_{k_n}^{-1} U_{k_n}^{-1}(x_{k_n}-\tilde{p}_{k_n})+B(\tilde{p}_{k_n})-B(p_{k_{n-1}}) \rightarrow 0$. By (\ref{u_n}) we also know that $(\tilde{p}_{k_n}, u_{k_n})\in Gra(A+B)$. By Proposition \ref{Prop 20.33 (BC)}, we obtain that $(x,0)\in Gra (A+B)$ and then $x\in zer(A+B)$. Altogether, it follows from (\ref{quasi-Fejer monotone}), Lemma \ref{lem 2.3 (10)} and Theorem \ref{Thm 3.3 (10)} that $x_n\rightharpoonup \bar{x}$ and hence $p_n \rightharpoonup \bar{x}$, by using (i)$\left[\sum\limits_{n\in\mathbb{N}} \|x_{n} - {p}_n\|^2 < +\infty\right] $.
\end{proof}


\section{A Primal-Dual Solver for Monotone Inclusion Problem}

As we know, many non-smooth optimization problems can be written as monotone inclusion primal-dual problems. In this case, we want to enhance our algorithm to deal with the \textbf{Problem 1}. So we proposed a corollary which follows from Theorem \ref{Thm_TengEP_variable_Error} as below.

\begin{corollary}\label{Mono_Inclusion_Corollary}
Let $\alpha$ be in $(0,+\infty)$, let $(\eta_{0, n})_{n\in\mathbb{N}}$ be a sequence in $\ell^{1}_{+} (\mathbb{N})$, let $(U_n)_{n\in\mathbb{N}}$ be a sequence in $\mathcal{P}_{\alpha}(\mathcal{H})$, and for every $i\in\{ 1,\dots, m\}$, let $(\eta_{i, n})_{n\in\mathbb{N}}$ be a sequence in $\ell_+^1 (\mathbb{N})$, let $(U_{i,n})_{n\in\mathbb{N}}$ be a sequence in $\mathcal{P}_{\alpha} (\mathcal{G}_i)$ such that $\mu = sup_{n\in\mathbb{N}} \{\|U_n\|, \|U_{1,n} \|,\dots, \|U_{m,n}\|  \}< +\infty$ and 
\begin{align}
(\forall n\in\mathbb{N}) \quad (1+\eta_{0,n}) U_{n+1} &\succcurlyeq U_n \notag\\
\mbox{and} \; (\forall i\in \{1,\dots,m\})\quad (1+\eta_{i,n}) U_{i,n+1}&\succcurlyeq U_{i,n}.
\end{align}
Let $(a_{1,n})_{n\in\mathbb{N}}, (b_{1,n})_{n\in\mathbb{N}}$ and $(c_{1,n})_{n\in\mathbb{N}}$ be absolutely summable sequences in $\mathcal{H}$, and for every $i\in\{1,\dots,m\}$, let $(a_{2,i,n})_{n\in\mathbb{N}}, (b_{2,i,n})_{n\in\mathbb{N}}$ and $(c_{2,i,n})_{n\in\mathbb{N}}$ be absolutely summable sequences in $\mathcal{G}_i$. Furthermore, set 
\begin{align}
\beta = v_0 + \sqrt{\sum\limits_{i=1}^m \|L_i\|^2},
\end{align}
let $x_0\in\mathcal{H}$, let $(v_{1,0},\dots, v_{m,0})\in\mathcal{G}_1 \bigoplus \dots \bigoplus \mathcal{G}_m$, let $(\gamma_n)_{n\in\mathbb{N}}\leq\lambda$ with $\lambda < \frac{1}{\sqrt{10}\mu\beta}$ and $\liminf\limits_{n\rightarrow +\infty} \gamma_n > 0$.
Set
\begin{align}\label{Mono_inclusion_Algor}
(\forall n\in\mathbb{N}) \left\lfloor
\begin{array}{l}
y_{1,n} = x_n - \gamma_n U_n \left( C(p_{1,n-1}) + \sum\limits_{i=1}^m L^*_i (p_{2,i,n-1}) + a_{1,n} \right)  \\
\mbox{for}\; i = 1,\dots,m \\
\left\lfloor
\begin{array}{l}
y_{2,i,n} = v_{i,n} + \gamma_n U_{i,n} \left( L_i(p_{1,n-1}) + a_{2,i,n}  \right)\\
p_{2,i,n} = J_{\gamma_{n} U_{i,n} B^{-1}_{i}}(y_{2,i,n} - \gamma_n U_{i,n} r_i) + b_{2,i,n} \\
\end{array}
\right.\\
p_{1,n} = J_{\gamma_n U_n A} (y_{1,n}+ \gamma_n U_n z) + b_{1,n} \\
\mbox{for}\; i = 1,\dots,m \\
\left\lfloor
\begin{array}{l}
q_{2,i,n} = p_{2,i,n} + \gamma_n U_{i,n} \left( L_i (p_{1,n}) + c_{2,i,n} \right)\\
v_{i,n+1} = v_{i,n} - y_{2,i,n} + q_{2,i,n} \\
\end{array}
\right.\\
q_{1,n} = p_{1,n} - \gamma_n U_{n} \left( C(p_{1,n}) + \sum\limits_{i=1}^{m} L_i^* (p_{2,i,n}) + c_{1,n}  \right) \\
x_{n+1} = x_n - y_{1,n} + q_{1,n}\\
\end{array}
\right.
\end{align}
Then the following hold.
\begin{enumerate}[label=(\roman*)]
\item  $\sum\limits_{n\in\mathbb{N}} \|x_n-p_{1,n}\|^2 < +\infty$ and $(\forall i\in\{1,\dots,m\}) \sum\limits_{n\in\mathbb{N}}\|v_{i,n}-p_{2,i,n}\|^2 < +\infty$.
\item There exists a solution $\bar{x}$ to (\ref{Mono Inclusion 2}) and a solution $(\bar{v}_1,\dots,\bar{v}_m)$ to (\ref{Mono Inclusion 3}) such that the following hold. 
\begin{enumerate}
\item[(1)]  $x_n\rightharpoonup \bar{x}$ and $p_{1,n}\rightharpoonup\bar{x}$.
\item[(2)] $(\forall i\in\{1,\dots,m\}\;) v_{i,n}\rightharpoonup \bar{v}_i$ and $p_{2,i,n}\rightharpoonup \bar{v}_i$.

\end{enumerate}
\end{enumerate}
\end{corollary}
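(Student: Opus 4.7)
The plan is to reduce Corollary \ref{Mono_Inclusion_Corollary} to Theorem \ref{Thm_TengEP_variable_Error} via the standard product-space reformulation. Introduce the real Hilbert space $\mathbf{K} = \mathcal{H} \oplus \mathcal{G}_1 \oplus \cdots \oplus \mathcal{G}_m$ equipped with the natural product scalar product, and on it define the set-valued operator
\begin{align*}
\mathbf{A} : (x, v_1, \ldots, v_m) \mapsto \left( -z + Ax \right) \times \left( r_1 + B_1^{-1} v_1 \right) \times \cdots \times \left( r_m + B_m^{-1} v_m \right)
\end{align*}
and the single-valued operator
\begin{align*}
\mathbf{B} : (x, v_1, \ldots, v_m) \mapsto \left( Cx + \sum_{i=1}^m L_i^* v_i,\; -L_1 x,\; \ldots,\; -L_m x \right).
\end{align*}
Since $A$ and each $B_i$ are maximally monotone (so each $B_i^{-1}$ is too, by (\ref{(subdriff(f))inverse equals subdrif f_star}) and its operator-theoretic analogue), $\mathbf{A}$ is a direct sum of constant shifts of maximally monotone operators, hence maximally monotone. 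A direct computation shows that $\mathbf{B}$ is monotone (the cross terms $\langle L_i^* v_i, x\rangle$ and $\langle -L_i x, v_i \rangle$ cancel) and Lipschitzian with constant $\beta = v_0 + \sqrt{\sum_{i=1}^m \|L_i\|^2}$ given in the statement. Moreover, a point $(\bar{x}, \bar{v}_1, \ldots, \bar{v}_m) \in \mathrm{zer}(\mathbf{A} + \mathbf{B})$ is precisely a primal-dual solution to (\ref{Mono Inclusion 2})--(\ref{Mono Inclusion 3}), and existence of such a point follows from the surjectivity assumption (\ref{Mono_Inclusion 1}).

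Next, endow $\mathbf{K}$ with the block-diagonal variable metric $\mathbf{U}_n : (x, v_1, \ldots, v_m) \mapsto (U_n x, U_{1,n} v_1, \ldots, U_{m,n} v_m)$. Then $\mathbf{U}_n \in \mathcal{P}_\alpha(\mathbf{K})$, $\sup_n \|\mathbf{U}_n\| \leq \mu$, and setting $\eta_n = \max\{\eta_{0,n}, \eta_{1,n}, \ldots, \eta_{m,n}\} \in \ell_+^1(\mathbb{N})$ one gets $(1+\eta_n)\mathbf{U}_{n+1} \succcurlyeq \mathbf{U}_n$ block by block. Using the shift identity $J_{\gamma \mathbf{U}(\Phi + w)} = J_{\gamma \mathbf{U} \Phi}(\,\cdot\, - \gamma \mathbf{U} w)$ for a maximally monotone $\Phi$ and a fixed vector $w$, the resolvent $J_{\gamma_n \mathbf{U}_n \mathbf{A}}$ decomposes precisely as the product of $J_{\gamma_n U_n A}(\,\cdot\, + \gamma_n U_n z)$ on $\mathcal{H}$ and $J_{\gamma_n U_{i,n} B_i^{-1}}(\,\cdot\, - \gamma_n U_{i,n} r_i)$ on each $\mathcal{G}_i$. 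This matches exactly the formulas for $p_{1,n}$ and $p_{2,i,n}$ in (\ref{Mono_inclusion_Algor}).

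With these identifications, setting $\mathbf{x}_n = (x_n, v_{1,n}, \ldots, v_{m,n})$, $\mathbf{p}_n = (p_{1,n}, p_{2,1,n}, \ldots, p_{2,m,n})$, $\mathbf{y}_n = (y_{1,n}, y_{2,1,n}, \ldots, y_{2,m,n})$, $\mathbf{q}_n = (q_{1,n}, q_{2,1,n}, \ldots, q_{2,m,n})$, and the corresponding error sequences $\mathbf{a}_n, \mathbf{b}_n, \mathbf{c}_n$ obtained by stacking the given ones (which remain absolutely summable in $\mathbf{K}$), one verifies that (\ref{Mono_inclusion_Algor}) is literally the iteration (\ref{set_para1}) applied to $\mathbf{A}$, $\mathbf{B}$, $\mathbf{U}_n$ on $\mathbf{K}$. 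The stepsize condition $\lambda < 1/(\sqrt{10}\mu\beta)$ with $\liminf \gamma_n > 0$ transfers verbatim. Theorem \ref{Thm_TengEP_variable_Error} then yields $\sum_n \|\mathbf{x}_n - \mathbf{p}_n\|^2 < +\infty$, which componentwise gives (i), together with weak convergence of $\mathbf{x}_n$ and $\mathbf{p}_n$ to a common limit $\bar{\mathbf{x}} = (\bar{x}, \bar{v}_1, \ldots, \bar{v}_m) \in \mathrm{zer}(\mathbf{A} + \mathbf{B})$, which componentwise gives (ii).

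The two delicate checks are the Lipschitz constant of $\mathbf{B}$ and the block decomposition of the resolvent with the shifts $z, r_i$. The first is a routine estimate: $\|\mathbf{B}(x, v) - \mathbf{B}(x', v')\|^2 \leq (v_0 \|x-x'\| + \|\sum L_i^*(v_i - v_i')\|)^2 + \sum \|L_i\|^2 \|x-x'\|^2$, and a Cauchy--Schwarz argument on $\mathbb{R}^2$ with the vectors $(v_0, \sqrt{\sum \|L_i\|^2})$ and $(\|x-x'\|, \|\mathbf{v} - \mathbf{v}'\|)$ produces the claimed $\beta$. The second is the main point where care is needed, since one must ensure that the block-diagonal structure of $\mathbf{U}_n$ is compatible with how the shifts $z$ and $r_i$ enter each resolvent; this is exactly the content of Lemma \ref{lem3.7(11)}(iii) applied componentwise, after rewriting $\mathbf{A}$ as $\mathbf{A}_0 + \mathbf{w}$ with $\mathbf{w} = (-z, r_1, \ldots, r_m)$ and $\mathbf{A}_0 = A \oplus B_1^{-1} \oplus \cdots \oplus B_m^{-1}$.
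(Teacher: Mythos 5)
Your proposal is correct and follows essentially the same route as the paper: the product-space reformulation with the operators $\mathbf{A}$, $\mathbf{B}$ and the block-diagonal metrics $\mathbf{U}_n$, the choice $\eta_n=\max\{\eta_{0,n},\dots,\eta_{m,n}\}$, the blockwise resolvent decomposition absorbing the shifts $z$ and $r_i$, and the final application of Theorem \ref{Thm_TengEP_variable_Error}. The only difference is cosmetic: where you sketch the Lipschitz estimate for $\mathbf{B}$ and the resolvent splitting directly, the paper simply cites the corresponding facts from Combettes--Pesquet and Bauschke--Combettes.
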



\begin{proof}
All sequences generated by algorithm (\ref{Mono_inclusion_Algor}) are well defined by Lemma \ref{lem3.7(11)}. We define $\mathbfcal{H} = \mathcal{H}\bigoplus \mathcal{G}_1 \bigoplus \dots \bigoplus \mathcal{G}_m$. the Hilbert direct sum of the Hilbert space $\mathcal{H}$ and $(G_i)_{1\leq i\leq m}$, the scalar product and the associated norm of $\mathbfcal{H}$ respectively  defined by
\begin{align}\label{Proof_mono_inclu_1}
\langle\langle\langle \cdot \rangle\rangle\rangle &: ((x,v),(y,w)) \mapsto \inner{x}{y} + \sum\limits_{i=1}^m \inner{v_i}{w_i}, and \notag\\
\|\| \cdot \|\| &: (x,v) \mapsto \sqrt{\|x\|^2 + \sum_{i=1}^m \|v_i\|^2},
\end{align}
where $\mathbf{v}= (v_1,\dots,v_m)$ and $\mathbf{w} = (w_1,\dots,w_m)$ are generic elements in $\mathcal{G}_1 \bigoplus \dots \bigoplus \mathcal{G}_m$. Set 
\begin{align}\label{Proof_mono_inclu_2}
\begin{cases}
\mathbf{A} : \mathbfcal{H} \rightarrow 2^{\mathbfcal{H}} : (x,v_1,\dots,v_m) \mapsto (-z + Ax)\times(r_1+B^{-1}_1 v_1)\times\dots\times(r_m +B^{-1}_m v_m)\\
\mathbf{B}: \mathbfcal{H} \rightarrow \mathbfcal{H}: (x,v_1,\dots,v_m) \mapsto (Cx + \sum\limits_{i=1}^m L_i^* v_i, -L_1x,\dots,-L_m x)\\
(\forall n\in\mathbb{N})\;  \mathbf{U}_n : \mathbfcal{H} \rightarrow \mathbfcal{H}: (x,v_1,\dots,v_m) \mapsto (U_nx, U_{1,n}v_1,\dots,U_{m,n}v_m) 
\end{cases}
\end{align}
Since $\mathbf{A}$ is maximally monotone (see  Proposition 20.22 and 20.23 in [\cite{BC-Book}]), $\mathbf{B}$ is monotone $\beta$-Lipschitzian (see Equation (3.10) in [\cite{Combettes2012Pesquet}]) with $dom \mathbf{B} =\mathbfcal{H}$, $\mathbf{A}+\mathbf{B}$ is maximally monotone (see  Corolarry 24.24(i) in [\cite{Combettes2012Pesquet}]). Now set $(\forall n\in\mathbb{N})\; \eta_n = \max\{\eta_{0,n},\eta_{1,n},\dots,\eta_{m,n}\}$. Then $(\eta_n)_{n\in\mathbb{N}}\in \ell^1_{+}(\mathbb{N}).$ Moreover, we derive from our assumptions on the sequences $(U_n)_{n\in\mathbb{N}}$ and $(U_{1,n})_{n\in\mathbb{N}},\dots,(U_{m,n})_{n\in\mathbb{N}}$ that
\begin{align}\label{Proof_mono_inclu_3}
\mu =\sup_{n\in\mathbb{N}} \|\mathbf{U}_n\| < +\infty \quad\mbox{and}\quad (\forall n\in\mathbb{N})\; (1+\eta_n)\mathbf{U}_{n+1}\succcurlyeq \mathbf{U}_n \in \mathcal{P}_{\alpha}(\mathbfcal{H}).
\end{align}
In addition, Proposition 23.15(ii) and 23.16 in [\cite{BC-Book}] yields $(\forall \gamma\in (0,+\infty) (\forall n\in\mathbb{N})(\forall(x,v_1,\dots,v_m)\in \mathbfcal{H})$
\begin{align}\label{Proof_mono_inclu_4}
J_{\gamma U_n \mathbf{A}}(x,v_1,\dots,v_m)= \left(
J_{\gamma U_n A}(x+\gamma U_n z),(J_{\gamma U_{i,n }B^{-1}_i } (v_i-\gamma U_{i,n}r_i) )_{1\leq i \leq m}
 \right).
\end{align}
It is shown in Equation (3.12) and Equation (3.13) of [\cite{Combettes2012Pesquet}] that under the condition (\ref{Mono_Inclusion 1}), $zer(\mathbf{A}+\mathbf{B})\neq\emptyset.$ Moreover, Equation (3.21) ans Equation (3.22) in [\cite{Combettes2012Pesquet}] yield
\begin{align}\label{Proof_mono_inclu_5}
(\bar{x},\bar{v}_1,\dots,\bar{v}_m)\in zer(\mathbf{A}+\mathbf{B}) \Rightarrow \bar{x} \;\mbox{solves (\ref{Mono Inclusion 2}) and } (\bar{v}_1,\dots,\bar{v}_m) \mbox{ solves (\ref{Mono Inclusion 3}). }
\end{align}
Let us next set

\begin{align}\label{Proof_mono_inclu_6}
(\forall n\in\mathbb{N})
\begin{cases} 
\mathbf{x}_n = (x_n, v_{1,n},\dots,v_{m,n})\\
\mathbf{y}_n = (y_{1,n},y_{2,1,n},\dots,y_{2,m,n})\\
\mathbf{p}_n = (p_{1,n},p_{2,1,n},\dots,p_{2,m,n})\\
\mathbf{q}_n = (q_{1,n}, q_{2,1,n},\dots,q_{2,m,n}) 
\end{cases} \mbox{and} 
\begin{cases}
\mathbf{a}_n = (a_{1,n},a_{2,1,n},\dots,a_{2,m,n})\\
\mathbf{b}_n = (b_{1,n},b_{2,1,n},\dots,b_{2,m,n})\\
\mathbf{c}_n = (c_{1,n},c_{2,1,n},\dots,c_{2,m,n}).
\end{cases}
\end{align}

Then our assumptions imply that
\begin{align}\label{Proof_mono_inclu_7}
\sum\limits_{n\in\mathbb{N}} \|\| \mathbf{a}_n\|\| < \infty, \sum\limits_{n\in\mathbb{N}} \|\| \mathbf{b}_n\|\| < \infty, \mbox{and} \sum\limits_{n\in\mathbb{N}} \|\| \mathbf{c}_n\|\| < \infty.
\end{align}
Furthermore, it follows from the definition of $\mathbf{B}$, (\ref{Proof_mono_inclu_4}), and (\ref{Proof_mono_inclu_6}) that (\ref{Mono_inclusion_Algor}) can be written in $\mathbfcal{H}$ as 
\begin{align}
\left\lfloor
\begin{array}{l}
\mathbf{y_n} = \mathbf{x}_n -\gamma_n \mathbf{U}_n(B(\mathbf{p}_{n-1})+ \mathbf{a}_n)\\
\mathbf{p}_n = J_{\gamma_n U_n A} \mathbf{y}_n + \mathbf{b}_n\\
\mathbf{q}_n = \mathbf{p}_n - \gamma_n U_n(B(\mathbf{p}_n)+\mathbf{c}_n)\\
\mathbf{x}_{n+1}=\mathbf{x}_n - \mathbf{y}_n +\mathbf{q}_n,
\end{array}
\right.
\end{align}
which is (\ref{set_para1}). Moreover, every specific conditions in Theorem \ref{Thm_TengEP_variable_Error} are satisfied.
\begin{enumerate}[label=(\roman*)]
\item By Theorem \ref{Thm_TengEP_variable_Error}(i), $\sum\limits_{n\in\mathbb{N}} \| \| x_n - p_n\|\|^2 < +\infty$.
\item  There exists a solution $\bar{x}$ to (\ref{Mono Inclusion 2}) and a solution $(\bar{v}_1,\dots,\bar{v}_m)$ to (\ref{Mono Inclusion 3}) such that the following hold.
\begin{enumerate}
\item[(a)] $x_n\rightharpoonup \bar{x}$ and $p_{1,n}\rightharpoonup \bar{x}$.
\item[(b)] $(\forall i\in\{ 1,\dots,m \})$ $v_{i,n}\rightharpoonup \bar{v}_i$ and $p_{2,i,n}\rightharpoonup \bar{v}_i$. 
\end{enumerate}

\end{enumerate}
\end{proof}


\section{A Primal-Dual Splitting Algorithm for Convex Optimization Problem}

Next, we further introduce the primal-dual splitting algorithm for solving \textbf{Problem 2}. Actually, we can call it splitting algorithm because the involved functions in our problem are decoupled, as we can see in the structure of the algorithm below.

\begin{theorem}\label{Primal-Dual Tseng-EP with var matrices and error}
 In \textbf{Problem 2}, suppose that
 \begin{align}\label{Primal-Dual 3}
 z\in ran \left( \partial f + \sum\limits_{i=1}^m L_i^*(\partial g_i)(L_i \cdot - r_i) +\nabla h \right).
 \end{align}
Let $\alpha$ be in $(0,+\infty)$, let $(\eta_{0, n})_{n\in\mathbb{N}}$ be a sequence in $\ell^{1}_{+} (\mathbb{N})$, let $(U_n)_{n\in\mathbb{N}}$ be a sequence in $\mathcal{P}_{\alpha}(\mathcal{H})$, and for every $i\in\{ 1,\dots, m\}$, let $(\eta_{i, n})_{n\in\mathbb{N}}$ be a sequence in $\ell_+^1 (\mathbb{N})$, let $(U_{i,n})_{n\in\mathbb{N}}$ be a sequence in $\mathcal{P}_{\alpha} (\mathcal{G}_i)$ such that $\mu = sup_{n\in\mathbb{N}} \{\|U_n\|, \|U_{1,n} \|,\dots, \|U_{m,n}\|  \}< +\infty$ and 
\begin{align}
(\forall n\in\mathbb{N}) \quad (1+\eta_{0,n}) U_{n+1} &\succcurlyeq U_n \notag\\
\mbox{and} \; (\forall i\in \{1,\dots,m\})\quad (1+\eta_{i,n}) U_{i,n+1}&\succcurlyeq U_{i,n}.
\end{align} 
 
\noindent Let $(a_{1,n})_{n\in\mathbb{N}}, (b_{1,n})_{n\in\mathbb{N}}$ and $(c_{1,n})_{n\in\mathbb{N}}$ be absolutely summable sequences in $\mathcal{H}$, and for every $i\in\{1,\dots,m\}$, let $(a_{1,i,n})_{n\in\mathbb{N}}, (b_{1,i,n})_{n\in\mathbb{N}}$ and $(c_{1,i,n})_{n\in\mathbb{N}}$ be absolutely summable sequences in $\mathcal{G}_i$. Furthermore, set 
\begin{align}\label{Primal-Dual 4}
\beta = v_0 + \sqrt{\sum\limits_{i=1}^m \|L_i\|^2},
\end{align}
let $x_0\in\mathcal{H}$, let $(v_{1,0},\dots, v_{m,0})\in\mathcal{G}_1 \bigoplus \dots \bigoplus \mathcal{G}_m$, let $(\gamma_n)_{n\in\mathbb{N}}\leq\lambda$ with $\lambda < \frac{1}{\sqrt{10}\mu\beta}$ and $\liminf\limits_{n\rightarrow +\infty} \gamma_n > 0$.
Set
\begin{align}\label{Primal-Dual 5_Algor}
(\forall n\in\mathbb{N}) \left\lfloor
\begin{array}{l}
y_{1,n} = x_n - \gamma_n U_n \left(\nabla h(p_{1,n-1}) + \sum\limits_{i=1}^m L^*_i (p_{2,i,n-1}) + a_{1,n} \right)  \\
\mbox{for}\; i = 1,\dots,m \\
\left\lfloor
\begin{array}{l}
y_{2,i,n} = v_{i,n} + \gamma_n U_{i,n} \left( L_i(p_{1,n-1}) + a_{2,i,n}  \right)\\
p_{2,i,n} = prox_{\gamma_{n} g_{i}^* }^{U_{i,n}^{-1}} (y_{2,i,n} - \gamma_n U_{i,n} r_i) + b_{2,i,n} \\
\end{array}
\right.\\
p_{1,n} = prox_{\gamma_n f}^{U_n^{-1}} (y_{1,n}+ \gamma_n U_n z) + b_{1,n} \\
\mbox{for}\; i = 1,\dots,m \\
\left\lfloor
\begin{array}{l}
q_{2,i,n} = p_{2,i,n} + \gamma_n U_{i,n} \left( L_i (p_{1,n}) + c_{2,i,n} \right)\\
v_{i,n+1} = v_{i,n} - y_{2,i,n} + q_{2,i,n} \\
\end{array}
\right.\\
q_{1,n} = p_{1,n} - \gamma_n U_{n} \left( \nabla h (p_{1,n}) + \sum\limits_{i=1}^{m} L_i^* (p_{2,i,n}) + c_{1,n}  \right) \\
x_{n+1} = x_n - y_{1,n} + q_{1,n}\\
\end{array}
\right.
\end{align}
Then the following hold.
\begin{enumerate}[label=(\roman*)]
\item  $\sum\limits_{n\in\mathbb{N}} \|x_n-p_{1,n}\|^2 < +\infty$ and $(\forall i\in\{1,\dots,m\}) \sum\limits_{n\in\mathbb{N}}\|v_{i,n}-p_{2,i,n}\|^2 < +\infty$.
\item There exists a solution $\bar{x}$ to (\ref{Primal-Dual 1}) and a solution $(\bar{v}_1,\dots,\bar{v}_m)$ to (\ref{Primal-Dual 2}) such that the following hold.
\begin{enumerate}
\item[(a)] $z - \sum\limits_{j=1}^m L_j^*  \bar{v}_j \in \partial f(\bar{x}) + \nabla h(\bar{x}) $ and $(\forall i \in \{1,\dots,m \})\; L_i \bar{x} - r_i \in \partial g^*_i (\bar{v}_i)$.\\
\item[(b)] $x_n \rightharpoonup \bar{x}$ and $p_{1,n}\rightharpoonup \bar{x}$.\\
\item[(c)] $(\forall i\in\{1,\dots,m \}) \; v_{i,n}\rightharpoonup \bar{v}_i$ and $p_{2,i,n}\rightharpoonup \bar{v}_{i} $.
\end{enumerate}
\end{enumerate}
\end{theorem}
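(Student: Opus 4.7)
The plan is to reduce Theorem \ref{Primal-Dual Tseng-EP with var matrices and error} to Corollary \ref{Mono_Inclusion_Corollary} by specializing the abstract monotone operators to subdifferentials and a gradient. Concretely, I would set $A = \partial f$, $B_i = \partial g_i$ for $i\in\{1,\dots,m\}$, and $C = \nabla h$. By the paper's discussion following Problem 1 (which invokes Proposition 15.2 and Corollary 16.24 of \cite{BC-Book}), together with the qualification condition (\ref{Primal-Dual 3}) which is precisely the analog of (\ref{Mono_Inclusion 1}), Problem 2 is equivalent to the instance of Problem 1 with these choices. Then $\partial f$ is maximally monotone because $f\in \Gamma_0(\mathcal{H})$; similarly for $\partial g_i$; and $\nabla h$ is $v_0$-Lipschitzian and monotone because $h$ is convex and differentiable with $v_0$-Lipschitz gradient. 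Hence all standing hypotheses of Corollary \ref{Mono_Inclusion_Corollary} are met.

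Next I would match the resolvents appearing in algorithm (\ref{Mono_inclusion_Algor}) with the proximal operators of (\ref{Primal-Dual 5_Algor}) via the identities in (\ref{prox^U_f and P^U_C}). Indeed, from $\mathrm{prox}_f^U = J_{U^{-1}\partial f}$ we obtain, for any $\gamma_n>0$ and any $U_n\in\mathcal{P}_\alpha(\mathcal{H})$,
\begin{align*}
J_{\gamma_n U_n \partial f} = J_{(\gamma_n^{-1}U_n^{-1})^{-1}\partial f}\cdot(\text{rescaling on }f) = \mathrm{prox}_{\gamma_n f}^{U_n^{-1}},
\end{align*}
after absorbing the scaling into $\gamma_n f$ in the usual way. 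Likewise, using (\ref{(subdriff(f))inverse equals subdrif f_star}), $(\partial g_i)^{-1} = \partial g_i^*$, so $J_{\gamma_n U_{i,n}B_i^{-1}} = J_{\gamma_n U_{i,n}\partial g_i^*} = \mathrm{prox}_{\gamma_n g_i^*}^{U_{i,n}^{-1}}$. Hence algorithm (\ref{Mono_inclusion_Algor}), applied to the monotone inclusion induced by Problem 2, coincides line-by-line with (\ref{Primal-Dual 5_Algor}). Part (i) and the weak convergence statements in (b) and (c) of part (ii) then follow directly from the corresponding assertions in Corollary \ref{Mono_Inclusion_Corollary}.

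The only item left is part (a) of (ii). For this I would invoke the characterization of zeros of the product-space operator $\mathbf{A}+\mathbf{B}$ used in the proof of Corollary \ref{Mono_Inclusion_Corollary}: namely, that $(\bar x,\bar v_1,\dots,\bar v_m)\in\mathrm{zer}(\mathbf{A}+\mathbf{B})$ is equivalent to the Kuhn--Tucker system
\begin{align*}
z - \sum_{j=1}^m L_j^*\bar v_j \in \partial f(\bar x) + \nabla h(\bar x),\qquad (\forall i\in\{1,\dots,m\})\ L_i\bar x - r_i \in \partial g_i^*(\bar v_i),
\end{align*}
where the second inclusion is obtained by rewriting $\bar v_i\in \partial g_i(L_i\bar x - r_i)$ via (\ref{(subdriff(f))inverse equals subdrif f_star}). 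Since the limit point produced by Corollary \ref{Mono_Inclusion_Corollary} lies in $\mathrm{zer}(\mathbf{A}+\mathbf{B})$, this is exactly statement (a).

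Overall the proof is essentially a translation exercise: the genuine work is already contained in Theorem \ref{Thm_TengEP_variable_Error} and Corollary \ref{Mono_Inclusion_Corollary}. The only mildly delicate point is the bookkeeping needed to write the resolvents of $\gamma_n U_n \partial f$ and $\gamma_n U_{i,n}\partial g_i^*$ as variable-metric proximal operators, and to confirm that the qualification (\ref{Primal-Dual 3}) implies (\ref{Mono_Inclusion 1}) under the subdifferential calculus (which is routine given the standing convex-analytic hypotheses).
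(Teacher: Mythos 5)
Your proposal is correct and follows essentially the same route as the paper: set $A=\partial f$, $B_i=\partial g_i$, $C=\nabla h$, identify the resolvents $J_{\gamma_n U_n\partial f}$ and $J_{\gamma_n U_{i,n}\partial g_i^*}$ with the variable-metric proximal operators via $\mathrm{prox}_f^U=J_{U^{-1}\partial f}$ and $(\partial g_i)^{-1}=\partial g_i^*$, and then invoke Corollary \ref{Mono_Inclusion_Corollary}. The only step you compress is the final passage from the Kuhn--Tucker system to the assertion that $\bar x$ and $(\bar v_1,\dots,\bar v_m)$ actually solve the primal and dual optimization problems (\ref{Primal-Dual 1})--(\ref{Primal-Dual 2}), which the paper likewise handles by citing the Combettes--Pesquet argument rather than proving in detail.
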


\begin{proof}
Let us define 
\begin{align}\label{Proof_Primal_Dual_Eq_1}
A = \partial f, \quad C=\nabla h \quad\mbox{and} \quad (\forall i = \{1,\dots,m\}) \; B_i = \partial g_i  
\end{align}
It clear that (\ref{Primal-Dual 3}) yields (\ref{Mono_Inclusion 1}) and using (\ref{(subdriff(f))inverse equals subdrif f_star}) and (\ref{prox^U_f and P^U_C}) that (\ref{Primal-Dual 5_Algor}) yields (\ref{Mono_inclusion_Algor}). Moreover, it follows from Theorem 20.40 in [\cite{BC-Book}] that the operators $A$ and $(B_i)_{1\leq i\leq m}$ are maximally monotone, and from  Proposition 17.10 in [\cite{BC-Book}] that $C$ is monotone which is a Lipschitzian operator by the hypothesis of Problem 2. Altogether, we can apply Corollary \ref{Mono_Inclusion_Corollary} to obtain the existence of a point $\bar{x}\in\mathcal{H}$ such that 
\begin{align}\label{Proof_Primal_Dual_Eq_2}
 z\in \partial f (\bar{x}) + \sum\limits_{i=1}^{m} L_i^* \left( \partial g_i (L_i \bar{x}- r_i)\right) + \partial h(\bar{x}),
\end{align}
and of an $m$-tuple $(\bar{v}_1,\dots,\bar{v}_m)\in \mathcal{G}_1\bigoplus\dots\bigoplus\mathcal{G}_m$ such that 
\begin{align}\label{Proof_Primal_Dual_Eq_3}
(\exists x\in\mathcal{H})\;
\begin{cases}
z-\sum\limits_{j=1}^m L_j^* \bar{v}_j \in \partial f(x) +\nabla h(x)\\
(\forall i \in \{1,\dots,m \})\; \bar{v}_i \in (\partial g_i) (L_i x - r_i),
\end{cases}
\end{align}
that satisfy $(i)$ and $(ii)$. Now we can follow the proof in [\cite{Combettes2012Pesquet}] with our setting above and some tools in [\cite{BC-Book}] to obtain that $\bar{x}$ solves (\ref{Primal-Dual 1}) and $(\bar{v}_1,\dots,\bar{v}_m)$ solves (\ref{Primal-Dual 2}).
\end{proof}


\begin{remark}
	In order to assure (\ref{Primal-Dual 3}), we need some similar conditions as given in [\cite{Combettes2012Pesquet}] (Proposition 4.3):
	Suppose that (\ref{Primal-Dual 1}) has at least one solution and set
	\begin{align}
	\mathbb{S}=\{ (L_ix-y_i)_{1\leq i \leq m} | x\in dom f and (\forall i\in \{1,\dots,m\}) y_i\in dom\; g_i \}.
	\end{align}
	Then the equation (\ref{Primal-Dual 3}) is satisfied if one of the following holds.
	\begin{enumerate}[label=(\roman*)]
	\item $(r_1,\dots,r_m)\in \textbf{sri}\; \mathbb{S}$. 
	\item For every $i\in\{1,\dots,m \}$, $g_i$ is real-valued.
	\item $\mathcal{H}$ and $(\mathcal{G}_i)_{1\leq i \leq m}$ are finite-dimentional, and there exists $x\in \textbf{ri}\; dom f$ such that
	\begin{align}
	(\forall i \in \{1,\dots,m\}) L_ix-r_i\in \textbf{ri}\; dom \;g_i.
	\end{align}
	The notations $\textbf{ri}$ and $\textbf{sri}$ denote to be a relative interior and strong relative interior of set respectively which we refer readers to see more detail in [\cite{BC-Book}].
	\end{enumerate}
\end{remark}


\section{Numerical Experiment in Imaging}

For this section, we intend to illustrate the numerical experiment in image deblurring which is correlated with our proposed primal-dual problem. Throughout this part, we implemented the numerical codes in MATLAB and performed all computations on a Window desktop with an Intel(R) Core(TM) i5-8250U processor at 1.6 gigahertz up to 1.8 gigahertz and RAM 8.00 GB. Accordingly, the theoretical result obtained in the previous section can be used. 
It should be noted that we use the grayscale image which have been normalized, in order to make their pixels range in the closed interval from 0 to 1 for this experiment. 

For a given matrix $A\in\mathbb{R}^{n\times n}$ describing a blur operator and a given vector $b\in\mathbb{R}^n$ representing the blurred and noisy image, the task is to estimate the unknown original image $\bar{x}\in\mathbb{R}^n$ fulfilling
\begin{align*}
	A\bar{x}=b.
\end{align*} 
To this end we solve the following regularized convex minimization problem
\begin{align}\label{TVisoDebluring probelm}
	\inf\limits_{x\in [0,1]^n} \big\lbrace \| Ax-b\|_1 +\lambda (TV_{iso}(x)+ \|x\|^2)   \big\rbrace,
\end{align}
where $\lambda > 0$ is a regularization parameter and $TV_{iso}: \mathbb{R}^n \rightarrow\mathbb{R}$ is the discrete isotropic total variation functional. In this context, $x\in \mathbb{R}^n$ represents the vectorized image  $X\in\mathbb{R}^{M\times N}$, where $n=M\cdot N$ and $x_{i,j}$ denotes the normalized value of the pixel located in the $i$th row and the $j$th column, for $i=1,\dots,M$ and $j = 1,\dots,N$.
The \textit{isotropic total variation} $TV_{iso} : \mathbb{R}^n \rightarrow \mathbb{R}$ is defined by
\begin{align*}
	TV_{iso}(x)= \sum\limits_{i=1}^{M-1} \sum\limits_{i=1}^{N-1} \sqrt{(x_{i+1,j} - x_{i,j})^2+(x_{i,j+1}-x_{i,j})^2} +\sum_{i=1}^{M-1} |x_{i+1,N}-x_{i,N} | + \sum_{j=1}^{N-1} |x_{M,j+1}-x_{M,j} |. 
\end{align*}

The optimization problem (\ref{TVisoDebluring probelm}) can be written in the framework of Problem (\ref{Primal-Dual 1}). We denote $\mathcal{Y}=\mathbb{R}^n\times\mathbb{R}^n$ and define the linear operator $\tilde{L}: \mathbb{R}^n\rightarrow\mathcal{Y}$, $x_{i,j} \mapsto (\tilde{L}_{1}x_{i,j}, \tilde{L}_{2}x_{i,j})$, where
\begin{align*}
	\tilde{L}_{1}x_{i,j} = 
				\begin{cases} x_{i+1,j}-x_{i,j}, \; &\mbox{if}\; i< M \\
							0, \;&\mbox{if}\; i=M
				\end{cases}
	and \; 
	\tilde{L}_{2} x_{i,j} = \begin{cases}
							x_{i,j+1} - x_{i,j}, \;&\mbox{if}\; j<N\\
							0, \;&\mbox{if}\; j=N
	\end{cases}.
\end{align*}
The operator $\tilde{L}$ represents a discretization of the gradient using reflexive (Neumann) boundary conditions and standard finite differences and fulfils $\|\tilde{L}\|^2\leq 8$. For the formula for its adjoint operator $\tilde{L}^*:\mathcal{Y}\rightarrow\mathbb{R}^n$, we refer to [\cite{Chambolle2004}].

	For $(y,z),(p,q)\in \mathcal{Y}$, we introduce the inner product
\begin{align*}
	\inner{(y,z)}{(p,q)} = \sum_{i=1}^{M}\sum_{j=1}^{N} y_{i,j}p_{i,j} + z_{i,j}q_{i,j}
\end{align*}
and define $\|(y,z)\|_{\times} = \sum_{i=1}^{M}\sum_{j=1}^{N} \sqrt{y_{i,j}^2+z_{i,j}^2}$. One can check that $\| \cdot\|_{\times}$ is a norm on $\mathcal{Y}$ and that for every $x\in\mathbb{R}^n$, it holds $TV_{iso}(x)=\|\tilde{L}x\|_{\times}$. The conjugate function $(\|\cdot\|_{\times})^*: \mathcal{Y}\rightarrow\bar{\mathbb{R}}$ of $\|\cdot\|_{\times}$ is for every $(p,q)\in \mathcal{Y}$ given by
\begin{align*}
	(\|\cdot\|)^*(p,q)=
	\begin{cases}
		0, \;&\mbox{if}\; \|(p,q)\|_{\times *} \leq 1\\
		+\infty, \;&\ otherwise
	\end{cases}
\end{align*} 
where
\begin{align*}
	\|(p,q)\|_{\times *} = \sup_{\|(y,z)\|_{\times}\leq 1} \inner{(p,q)}{(y,z)} = \max_{\substack{1\leq i\leq M\\ 1\leq j\leq N} } \sqrt{p_{i,j}^2+q_{i,j}^2}.
\end{align*}

Therefore, the optimization problem (\ref{TVisoDebluring probelm}) can be written in the form of
\begin{align*}
	\inf_{x\in\mathcal{H}} \big\lbrace f(x)+g_1(Ax)+ g_2(\tilde{L}x) + h(x) \big\rbrace,
\end{align*}
where $f: \mathbb{R}^n\rightarrow\bar{\mathbb{R}}, f(x)=\iota_{[0,1]^n}(x), g_1(y) = \|y-b\|_1, g_2:\mathcal{Y}\rightarrow\mathbb{R}, g_{2}(y,z)=\lambda\|(y,z)\|_{\times}$ and $h:\mathbb{R}^n\rightarrow\mathbb{R}, h(x)=\lambda\|x\|^2$ (notice that terms $r_i$ and $z$ are taken to be the zero vectors for $i=1,2$). For every $p\in\mathbb{R}^n$, it holds $g_1^*(p)=\iota_{[-1,1]^n}(p)+p^Tb$, while for every $(p,q)\in\mathcal{Y}$, we have $g_2^*(p,q)=\iota_{S}(p,q)$, with $S=\{(p,q)\in\mathcal{Y}:\|(p,q)\|_{\times *}\leq\lambda\}$. Moreover, $h$ is differentiable with $\kappa^{-1}:= 2\lambda$-Lipschitz continuous gradient. To solve this problem, we require the following formulae
\begin{align*}
	prox_{\gamma f}(x) &= \underset{y\in\mathbb{R}^n}{\arg\min}\big\{\gamma f(y) + \frac{1}{2}\|y-x\|^2\big\}=\underset{y\in [0,1]^n}{\arg\min}\big\{ \frac{1}{2}\|y-x\|^2\big\} = P_{[0,1]^n}(x) \forall x\in\mathbb{R}^n, 
\end{align*} 
\begin{align*}
	prox_{\gamma g_1^*}(p)&= \underset{y\in\mathbb{R}^n}{\arg\min}\big\{\gamma g_1^*(y) + \frac{1}{2}\|y-x\|^2\big\}= \underset{y\in\mathbb{R}^n}{\arg\min}\big\{\gamma (\iota_{[-1,1]^n}(y)+y^Tb) + \frac{1}{2}\|y-x\|^2\big\}\\
	&=\underset{y\in[-1,1]^n}{\arg\min}\big\{\gamma (y^Tb)+ \frac{1}{2}\|y-x\|^2\big\} = P_{[-1,1]^n} (p-\gamma b)  \; \forall p\in\mathbb{R}^n,
\end{align*}
\begin{align*}
	prox_{\gamma g_2^*}(p,q)&= P_S(p,q) \;\forall(p,q)\in\mathcal{Y},
\end{align*}
where $\gamma>0$ and the projection operator $P_S: \mathcal{Y}\rightarrow S$ is defined as (see [\cite{Bot2010}])
\begin{align*}
	(p_{i,j},q_{i,j})\mapsto \lambda\frac{(p_{i,j},q_{i,j})}{\max\big\{\lambda, \sqrt{p_{i,j}^2+q_{i,j}^2} \big\}}, 1\leq i \leq M, 1\leq j \leq N.
\end{align*}

Follows from the definition of the proximity operator of $f$ relative to the variable matrices (\ref{prox_variable_matrices}) and Lemma \ref{lem3.7(11)} $(iii)$, for $\gamma_n >0$, we obtain (see also (\ref{prox^U_f and P^U_C}))
\begin{align*}
	prox_{\gamma_n f}^{U_n^{-1}} (x) = J_{(U_n^{-1})^{-1}\partial \gamma_nf}(x)= J_{U_n\partial \gamma_nf}(x)=(U_n^{-1}+\partial\gamma_n f)^{-1}\circ U_n^{-1}
\end{align*}
and similarly for $i=1,2$
\begin{align*}
	prox_{\gamma_n  g_i^*}^{U_n^{-1}} (x) = J_{(U_n^{-1})^{-1}\partial \gamma_n g_i^*}(x)= J_{U_n\partial \gamma_n g_i^*}(x)=(U_n^{-1}+\partial\gamma_n g_i^*)^{-1}\circ U_n^{-1}.
\end{align*}

In Theorem \ref{Primal-Dual Tseng-EP with var matrices and error}, chose $(\tau_{n})_{n\in\mathbb{N}}$ and $(\sigma_i)_{1\leq i \leq m}$ in $(0,+\infty)$ such that  $U_n = \tau_{n} Id$ and $(\forall i\in\{1,\dots, m\})$ $U_{i,n} =\sigma_{i,n} Id$. Then (\ref{Primal-Dual 5_Algor}) reduce to the fixed metric methods (see related work in [\cite{Vu2013}]). Then the proximal operators turn into as follows
\begin{align*}
	prox_{\gamma_n f}^{(\tau_n Id)^{-1}} (x) &= \big[((\tau_n Id)^{-1}+\partial\gamma_n f)^{-1}\circ (\tau_n Id)^{-1} \big](x)=\big[ (\frac{1}{\tau_n} Id+\partial\gamma_n f)^{-1}\circ (\frac{1}{\tau_n} Id)\big] (x) \\
	&= \big[ (\frac{1}{\tau_n})^{-1} J_{\tau_n\partial\gamma_n f} \circ (\frac{1}{\tau_n} Id) \big] (x) \\ 
	&= \tau_n prox_{\tau_{n}\gamma_n f}\big(\frac{1}{\tau_n} x\big) \;\forall x\in\mathbb{R}^n , 
\end{align*}
similarly for $i=1,2$ we obtain
\begin{align*}
 	prox_{\gamma_n g^*_{1}}^{(\sigma_{1,n} Id)^{-1}} (p) &= \big[((\sigma_{1,n} Id)^{-1}+\partial\gamma_n g^*_{1})^{-1}\circ (\sigma_{1,n} Id)^{-1} \big](x)=\big[ (\frac{1}{\sigma_{1,n}} Id+\partial\gamma_n g^*_{1})^{-1}\circ (\frac{1}{\sigma_{1,n}} Id)\big] (x) \\
 	&= \big[ (\frac{1}{\sigma_{1,n}})^{-1} J_{\sigma_{1,n}\partial\gamma_n g^*_{1}} \circ (\frac{1}{\sigma_{1,n}} Id) \big] (x)\\ 
 	&= \sigma_{1,n} prox_{\sigma_{1,n}\gamma_n g^*_{1}}\big(\frac{1}{\sigma_{1,n}} x\big)=(\sigma_{1,n})  \; \forall p\in\mathbb{R}^n. 
\end{align*}
\begin{align*}
	prox_{\gamma_n g^*_{2}}^{(\sigma_{2,n} Id)^{-1}} (p,q) 
	&= \sigma_{2,n} prox_{\sigma_{2,n}\gamma_n g^*_{2}}\big(\frac{1}{\sigma_{2,n}} (p,q)\big)\\
	&=\sigma_{2,n}\lambda\frac{(\frac{p_{\bar{i},\bar{j}}}{\sigma_{2,n}},\frac{q_{\bar{i},\bar{j}}}{\sigma_{2,n}})}{\max\big\{\lambda, \sqrt{(\frac{p_{\bar{i},\bar{j}}}{\sigma_{2,n}})^2+(\frac{q_{\bar{i},\bar{j}}}{\sigma_{2,n}})^2} \big\}} \\
	&= \frac{(p_{\bar{i},\bar{j}},q_{\bar{i},\bar{j}})}{\max\big\{ 1, \frac{1}{\lambda}\sqrt{(\frac{p_{\bar{i},\bar{j}}}{\sigma_{2,n}})^2+(\frac{q_{\bar{i},\bar{j}}}{\sigma_{2,n}})^2} \big\}}  1\leq \bar{i} \leq M, 1\leq \bar{j} \leq N.
\end{align*}

When we want to measure the quality of the restored imaged, we use the tool known as \textit{ signal-to-noise ratio} (ISNR), which is given by (see [\cite{Chantas2008Galatsanos}])
\begin{align*}
\mbox{ISNR}_n = 10 \log_{10}\left(\frac{\|x-b\|^2}{\|x-x_n\|^2} \right),
\end{align*}

where $x$, $b$, and $x_n$ are the original, the observed noisy and the reconstructed image
at iteration $n \in \mathbb{N}$, respectively.

For the experiment, we considered the $256\times256$ cameraman image and constructed the blurred image by making use of a Gaussian blur operator of size $9\times 9$ and standard deviation 4. In order to obtain the blurred and noisy image, we added a zero-mean white Gaussian noise with standard deviation $10^{-3}$. Figure \ref{fig:method blur and deblur} shows the original cameraman image and the blurred and noisy one. It also shows the image reconstructed by the  algorithm after 1000 iterations, when taking as regularization parameter $\lambda = 0.003$, all error terms are zero, the variable metrics $U_n=\tau Id$, $U_{i,n}=\sigma_{i,n}Id$ and by choosing as parameters $\tau_{n}=1$, $\sigma_{1,n} = 0.1$, $\sigma_{2,n} =1$, a starting point $p_{1,-1} = p_{2,2,-1} = \bar{1}\times 0.4660$, $p_{2,1,-1} = (\bar{1},\bar{1})\times 0.4660$ where $\bar{1}= \begin{bmatrix}
	1 & 1 & \cdots & 1\\
	1 & 1 & \cdots & 1 \\
	\vdots & \vdots &\vdots & \vdots \\
	1 & 1 & \cdots & 1 
\end{bmatrix}_{256\times 256}$, $v_0 = 2\lambda$, $v_{1,0}=v_{2,0}=\bar{0}$ where $\bar{0}$ is a $256\times 256$ zero matrix  and $\gamma = \frac{1}{\sqrt{10}\mu(\beta+1)}$ where $\mu=1$, $\beta = 2\lambda+\sqrt{9}$ for $i\in\{1,2\}$.    

\begin{figure}[h!]
	\centering
	\begin{subfigure}[b]{0.3\linewidth}
		\caption{Original image}
		\includegraphics[width=\textwidth]{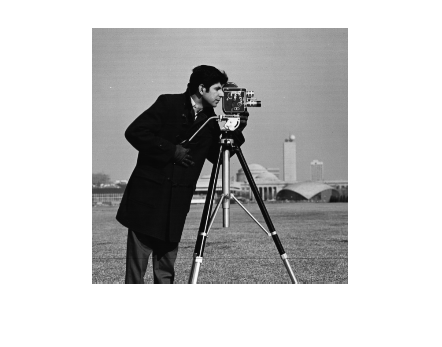}
	\end{subfigure}
	\begin{subfigure}[b]{0.3\linewidth}
		\caption{blurred and noisy image}
		\includegraphics[width=\linewidth]{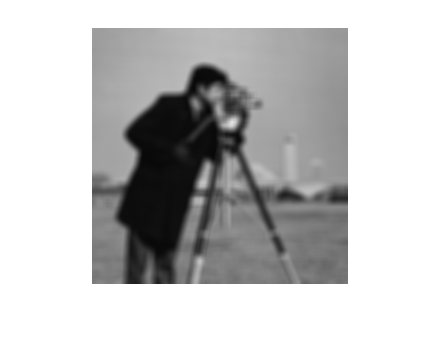}
	\end{subfigure}
	\begin{subfigure}[b]{0.3\linewidth}
	\caption{Reconstructed image}
	\includegraphics[width=\linewidth]{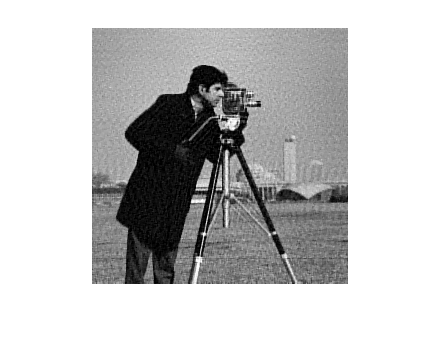}
	\end{subfigure}
	\caption{Figure (a) shows the original $256\times 256$ cameraman image, figure (b) shows the blurred and noisy image and figure (c) show the recover image generated by the algorithm after 1000 iterations.}
	\label{fig:method blur and deblur}
\end{figure}

In the error-free case such that the variable matrix is replaced by the identity matrix, we consider the the cameraman image with the same method of blurring with stopping criteria that is less than $10^{-2}$. For $n\geq 0$,  $\|x_{n}-x_{n+1}\|$, $|\mbox{fval}_{x_n}-\mbox{fval}_{x_{10000}^{**}}|$ and $\|x_n-x_{10000}^{**}\|$ are the examined criteria, where $\mbox{fval}_{x_n}$ is the objective value at the point $x_n$ and $x_{10000}^{**}$ is the solution point of the Tseng-EP algorithm after 10000 iterations. Table \ref{table:3 of criteria} shows the performance between the classical Tseng's algorithm and the Tseng's algorithm with extrapolation (Tseng-EP) when taking as regularization parameter $\lambda = 0.003$, a starting point $p_{1,-1} = p_{2,2,-1} = \bar{1}\times 0.4660$, $p_{2,1,-1} = (\bar{1},\bar{1})\times 0.4660$ where $\bar{1}= \begin{bmatrix}
	1 & 1 & \cdots & 1\\
	1 & 1 & \cdots & 1 \\
	\vdots & \vdots &\vdots & \vdots \\
	1 & 1 & \cdots & 1 
\end{bmatrix}_{256\times 256}$, $v_0 = 2\lambda$, $v_{1,0}=v_{2,0}=\bar{0}$ where $\bar{0}$ is a $256\times 256$ zero matrix  and  $\gamma_n = 1/(2\beta+0.1)$ where $\beta = 2\lambda+\sqrt{9}$.  We have seen that our proposed Tseng-EP algorithm spend the CPU-Time less than the classical Tseng's algorithm.

\begin{table}[h!]
	\centering
	\small
		\begin{tabular}{ c | c | c | c | c}
			 Criteria  & Algorithm& No.Iteration & ISNR & CPU-Time*  \\
	\hline \hline
		$\|x_n-x_{n+1}\| <10^{-2}$
			& Tseng
 			& 728
 			& 7.822241
 			& 6.55718

  \\ 
			  & Tseng-EP 
			  & 728 
			  & 7.822187
 			  & \cellcolor{LightGreen}4.53896
\\
		$| \mbox{fval}_{x_n}\!\!-\mbox{fval}_{x_{10000}^{**}} | < 10^{-2}$
			& Tseng
 			& 4802
 			& 7.423491
 			& 41.7736

  \\ 
			 & Tseng-EP 
			 & 4802
			 & 7.423493
 			 & \cellcolor{LightGreen}28.72458
\\
		$\|x_n-x_{10000}^{**}\| < 10^{-2}$
			& Tseng
 			& 4729
			& 7.424201
 			& 44.25134

  \\ 
			 & Tseng-EP 
			 & 4800 
			 & 7.424102
 			 & \cellcolor{LightGreen}29.23266
\\
			
		\end{tabular}
		\caption{The result of experiment for three different stopping criteria which are less than $10^{-2}$ when using $\gamma_n = 1/(2\beta+0.1)$ where $\beta = 2\lambda+\sqrt{9}$ and $\lambda = 0.003$ in error free case of the algorithm.}
		\label{table:3 of criteria}
	\end{table}

For the generalisation of our algorithm, we can choose $U_n = \tau_{n} Id$ and $(\forall i\in\{1,\dots, m\})$ $U_{i,n} =\sigma_{i,n} Id$, we select $\tau_n = 1$ and $\sigma_{i,n}$ is different values with the same setting of regularization parameter and initial points $p_{1,-1}$, $p_{2,2,-1}$, $p_{2,1,-1}$, $v_0$, $v_{1,0}$, $v_{2,0}$ and $\gamma_n = 1/(2\beta+0.1)$ where $\beta = 2\lambda+\sqrt{9}$ shown as Table \ref{table:change constant sigmas} for $n\geq 0$ for $i\in\{1,2\}$ which is the error free case of our Tseng-EP algorithm (see therein Theorem \ref{Primal-Dual Tseng-EP with var matrices and error}), we consider $|\mbox{fval}_{x_n}-\mbox{fval}_{x_{10000}^{**}}|<10^{-2}$ is a stopping criteria and notice that the choice of $\sigma_{i,n}$ for $i\in\{1,2\}$ should be a constant which is very close to 1, moreover if we replace them by 1.004 and 0.996, then we have seen that the algorithm not converges easily even more than 8000 iterations. Furthermore, Table \ref{table:change constant sigmas} give us the idea to use $\sigma_{i,n}$ are the convergent sequences which converges to 1 instead of the constant value. We used those sequences as follows:  $\frac{1}{k}$, $\frac{1}{k^2}$, $\frac{1}{k^5}$, $\frac{1}{k^k}$ and $\frac{k}{k+1}$ and demonstrate some result which consumed small of the number of iterations or give the best ISNR value. This illustrates in the Table \ref{table:shuffle of tau and sigma as seq}.

	\begin{table}[h!]
	\centering
	\small
	\begin{tabular}{  c | c | c | c | c | c | c}
		 Criteria  & Iteration & $\tau_n$ & $\sigma_{i,n}$ & $\mbox{fval}_{x_{iteration}}$ & ISNR & CPU-Time(s)  \\
		\hline \hline

		& 7232 
		& 1 
		& 1.003 
		&  97.7279 
		& 6.958979 
		& 46.0178 
		
		\\ 
		& 5589 
		& 1 
		& 1.002 
		& 97.727654 
		& 7.213882 
		& 35.0158 
		\\
		& 5000 
		& 1 
		& 1.001 
		& 97.727931 
		& 7.364356 
		& 31.1526 

		\\
		 $| \mbox{fval}_{x_n}\!\!-\mbox{fval}_{x_{10000}^{**}} | < 10^{-2}$ 
		& 4802 
		& 1 
		& 1 
		& 97.727766 
		& 7.423493 
		& 27.87 
		\\
		& 4992 
		& 1 
		& 0.9999 
		& 97.72766 
		& 7.367581 
		& 34.8569 
		\\ 
		& 5562 
		& 1 
		& 0.998 
		& 97.72766 
		& 7.222046 
		& 45.7339 
		\\
		& 7283 
		& 1 
		& 0.997 
		& 97.727572 
		& 6.956446 
		& 58.8615 

	\end{tabular}
	\caption{The result of experiment for the stopping criteria which are less than $| \mbox{fval}_{x_n}\!\!-\mbox{fval}_{x_{10000}^{**}} | < 10^{-2}$  when using $\gamma_n = 1/(2\beta+0.1)$ where $\beta = 2\lambda+\sqrt{9}$ and $\lambda = 0.003$ and diversify the constant value of $\sigma_{i,n}$ for the integer $ n\geq 0$ and $i\in\{1,2\}$.}
	\label{table:change constant sigmas}
\end{table}	

\begin{table}[h!]
	\centering
	\small
	\begin{tabular}{ c | c | c | c | c | c}
	 	Iteration & $\tau_n$ & $\sigma_{i,n}$ & $\mbox{fval}_{x_{iteration}}$ & ISNR & CPU-Time(s)  \\
		\hline \hline

		 4802 
		& 1 
		& 1 
		& 97.727766 
		& 7.423493 
		& 27.87 
		\\
		
		 4804 
		&  1
		& 1-($\frac{1}{k^2}$) 
		& 97.727863 
		& 7.423559 
		& 31.015 
		
		\\ 
		4803 
		& 1 
		& 1-($\frac{1}{k^5}$) 
		& 97.727786 
		& 7.423521 
		& 30.6754 
		\\
		 4802 
		& $\frac{k}{k+1}$
		& 1 
		& 97.727619 
		& \cellcolor{LightCyan} 7.423555 
		& 31.1954 

		\\
		  4803 
		& $\frac{k}{k+1}$ 
		& 1-($\frac{1}{k^5}$) 
		&  97.72767 
		&  7.423575 
		&  29.1343 
		\\
		 4802 
		&  1-($\frac{1}{k^2}$) 
		&  1 
		&  97.727403 
		&  7.42298 
		&  30.0258 
		\\ 
		 4802 
		&  1-($\frac{1}{k^5}$)
		&  1 
		&  97.727357 
		&  7.422935 
		&  30.1102 
		\\
		 4804 
		&  1-($\frac{1}{k^5}$) 
		&  1-($\frac{1}{k^2}$) 
		&  97.727494 
		&  7.42299 
		&  30.055 
		\\
		  4802 
		&  1-($\frac{1}{k}$) 
		&  1 
		&  97.727629 
		&  7.423016 
		&  30.9404 
		\\
		
		 4803 
		&  1-($\frac{1}{k}$) 
		&  1-($\frac{1}{k^5}$) 
		&  97.727648 
		&  7.422963 
		&  29.8768 
		\\
		
		 4803 
		&  1-($\frac{1}{k}$) 
		&  1-($\frac{1}{k^k}$) 
		&  97.72797 
		&  7.423079 
		&  30.7497 
	\end{tabular}
	\caption{The result of experiment when $\tau_n$, $\sigma_{i,n}$ are selected by the value between a constant 1 and the sequences which converges to 1 with stopping criteria  $| \mbox{fval}_{x_n}\!\!-\mbox{fval}_{x_{10000}^{**}} | < 10^{-2}$ by using $\gamma_n = 1/(2\beta+0.1)$  where $\beta = 2\lambda+\sqrt{9}$ and $\lambda = 0.003$ for all $n\geq0$, $i\in\{1,2\}$.}
	\label{table:shuffle of tau and sigma as seq}
\end{table}	

However, since $\sigma_{i,n}$ for $i\in\{1,2\}$ can be independent of choice, then we started experiment with fixing $\tau_n=1$, $\sigma_{1,n}=1$ with $\sigma_{2,n}$ are the sequence i.e., $\frac{k}{k+1}$, $(1+\frac{1}{k})^k$, 1-($\frac{1}{k}$), 1-($\frac{1}{k^2}$), 1-($\frac{1}{k^5}$). The experiment result are shown as in Table \ref{table:switching sigma by fixing 1}. Even though some results give us a little bit better of ISNR but they still consume the CPU-Time more than when we chose $\sigma_{i,n} = 1, \; \forall i\in\{1,2\}$.

\begin{table}[h!]
	\centering
	\small
	\begin{tabular}{  c | c | | c | c | c | c | c}
		 Iteration & $\tau_n$ & $\sigma_{1,n}$ & $\sigma_{2,n}$ & $\mbox{fval}_{x_{iteration}}$ & ISNR & CPU-Time(s)  \\
		\hline \hline

		 5431 
		& 1 
		& 1 
		& $\frac{k}{k+1}$ 
		& 97.727452 
		& 7.256485 
		& 33.3473 
		\\
		
		 4804 
		&  1
		&  1 
		&  1-($\frac{1}{k^2}$) 
		& 97.727866 
		& 7.423558 
		& 29.1967 
		\\
		 5432 
		&  1
		&  1 
		&  1-($\frac{1}{k}$) 
		& 97.727455 
		& 7.256488 
		& 33.4189 
		\\
		 4803 
		&  1
		&  1 
		&  1-($\frac{1}{k^5}$) 
		& 97.727787 
		& 7.42352 
		& 28.7536 
		\\
		 4802 
		&  1
		&  $\frac{k}{k+1}$ 
		&  1 
		& 97.72749 
		& 7.423239 
		& 30.2019 
		\\
		 4802 
		&  1
		&  1-($\frac{1}{k^2}$) 
		&  1 
		& 97.727763 
		& 7.423495 
		& 29.4369 
		\\
		 4802 
		&  1
		&  1-($\frac{1}{k}$) 
		&  1 
		& 97.727488 
		& 7.42324 
		& 28.8945 
		\\
		 4802 
		&  1
		&  1-($\frac{1}{k^5}$) 
		&  1 
		& 97.727765 
		& 7.423495 
		& 29.1847 
		\\
		 4802 
		&  1
		&  $\frac{k}{k+1}$ 
		&  1 
		& 97.72749 
		& 7.423239 
		& 30.5509 
		\\
		
	\end{tabular}
	\caption{the result of experiment when we fixed $\tau_n =1$ and shuffle $\sigma_{1,n}$ and $\sigma_{2,n}$ between 1, $\frac{k}{k+1}$, $1-(\frac{1}{k})$, $1-(\frac{1}{k^2})$, $1-(\frac{1}{k^5})$ and $(1+\frac{1}{k})^k$ with stopping criteria  $| \mbox{fval}_{x_n}\!\!-\mbox{fval}_{x_{10000}^{**}} | < 10^{-2}$ and $\gamma_n = 1/(2\beta+0.1)$ where $\beta = 2\lambda+\sqrt{9}$ and $\lambda = 0.003$ for all $n\geq0$, $i\in\{1,2\}$.}
	\label{table:switching sigma by fixing 1}
\end{table}


Again, we consider to solve this problem by the same setting of $U_n = \tau_{n} Id$ and $(\forall i\in\{1,\dots, m\})$ $U_{i,n} =\sigma_{i,n} Id$ for some selections of $ \tau_{n}$, $\sigma_{i,n}$ and regularization parameter $\lambda=0,003$ and initial points $p_{1,-1}$, $p_{2,2,-1}$, $p_{2,1,-1}$, $v_0$, $v_{1,0}$, $v_{2,0}$. But in this observation, the method is allowed to have errors. Indeed, $a_{1,n}$, $b_{1,n}$, $c_{1,n}$, $a_{2,i,n}$, $b_{2,i,n}$, $c_{2,i,n}$ are absolutely summable sequences. Then we need to select $\gamma_n$ which satisfied condition in Theorem \ref{Primal-Dual Tseng-EP with var matrices and error} $\big( (\gamma_n)_{n\in\mathbb{N}}\leq\lambda$ with $\lambda < \frac{1}{\sqrt{10}\mu\beta}$ and $\liminf\limits_{n\rightarrow +\infty} \gamma_n > 0 \big)$, so we choose   $\gamma_n = \frac{1}{\sqrt{10}\mu(\beta+1)}$ where $\beta = 2\lambda+\sqrt{9}$ which $\beta = 2\lambda+\sqrt{9}$. Table \ref{table:fixed tau and sigma but variant Error} shows the result when  all of error terms equal to the following sequences ${1}/{k^2}$, ${1}/{k^5}$, ${1}/{k^k}$, $(1/2)^k$ by fixed $\tau_n=\sigma_{i,n}=1$ for all $n\geq0$, $i\in\{1,2\}$. We observe that their performances are not much significantly different but it is obvious that they spend double time of the error-free case.  

\begin{table}[h!]
	\centering
	\small
	\begin{tabular}{  c | c | c | c | c | c | c}
		Iteration & $\tau_n$ & $\sigma_{i,n}$ & Error & $\mbox{fval}_{x_{iteration}}$ & ISNR & CPU-Time(s)  \\
		\hline \hline

		9976 
		& 1 
		& 1 
		& ${1}/{k^2}$ 
		& 97.727773 
		& 7.415526 
		& 65.3116 
		\\
		
		9972 
		&  1 
		&  1 
		&  ${1}/{k^5}$  
		& 97.727751 
		& 7.415609 
		& 64.946 
		\\
		9973 
		&  1 
		&  1 
		&  ${1}/{k^k}$ 
		& 97.727652 
		& 7.415548 
		& 65.146 
		\\
		9971 
		&  1 
		&  1 
		&  $(1/2)^k$  
		& 97.727843 
		& 7.415333 
		& 65.8735 

	\end{tabular}

	\caption{the result of experiment when we fixed $\tau_n =1$, $\sigma_{i,n}=1$  for $n\geq 0$, $i\in\{1,2\}$ and various errors with stopping criteria  $| \mbox{fval}_{x_n}\!\!-\mbox{fval}_{x_{10000}^{**}} | < 10^{-2}$ and $\gamma_n = 1/(\sqrt{10}\mu(\beta+1)$ where $\beta = 2\lambda+\sqrt{9}$ and $\lambda = 0.003$ for all $n\geq0$, $i\in\{1,2\}$.}
	\label{table:fixed tau and sigma but variant Error}
\end{table}	


Next let the number of iteration is fixed at 5,000 iterations and $\tau_{n}=1$, $\sigma_{i,n}=1$, then differ the error terms as ${1}/{k^2}$,  ${1}/{k^5}$,  ${1}/{k^k}$,  $({1}/{2})^k$ shown as in Table \ref{table:fixed iteration_5000 tau and sigma1 and vary error}. We can see again that the modification of error in our experiment does not have much effect to the result but when we look at ISNR they deliver more than 8 with the highest one is 8.344218. In contrast, the function value is slightly high compared with the previous results for $\gamma_n = 1/(\sqrt{10}\mu(\beta+1)$.   

\begin{table}[h!]
	\centering
	\small
	\begin{tabular}{  c | c | c | c | c | c | c}
		Iteration & $\tau_n$ & $\sigma_{i,n}$ & Error & $\mbox{fval}_{x_{iteration}}$ & ISNR & CPU-Time(s)  \\
		\hline \hline

		5000 
		& 1 
		& 1 
		& $\frac{1}{k^2}$ 
		& 99.465867 
		& \cellcolor{LightBlue} 8.344218 
		& 32.5193 
		\\
		
		5000 
		&  1 
		&  1 
		&  $\frac{1}{k^5}$ 
		& 99.459621 
		& 8.342828 
		& 32.4141 
		\\
		5000 
		&  1 
		&  1 
		&   $\frac{1}{k^k}$ 
		& 99.459901 
		& 8.34304 
		& 32.5323 
		\\
		5000 
		&  1 
		&  1 
		& $(\frac{1}{2})^k$ 
		& 99.460987 
		& 8.342093 
		& 32.5181 
		\\

	\end{tabular}
	\caption{the result of experiment after 5,000 iterations by fixing $\tau_n =1$, $\sigma_{i,n}=1$  and vary errors as  ${1}/{k^2}$,  ${1}/{k^5}$,  ${1}/{k^k}$,  $({1}/{2})^k$  and $\gamma_n = 1/(\sqrt{10}\mu(\beta+1)$ where $\beta = 2\lambda+\sqrt{9}$ and $\lambda = 0.003$ for all $n\geq0$, $i\in\{1,2\}$.}
	\label{table:fixed iteration_5000 tau and sigma1 and vary error}
\end{table}	

From the aforementioned trial, we plot the graph for 10000 iterations when we fixed $\tau_n =1$, $\sigma_{i,n}=1$ error terms is  ${1}/{k^2}$  and $\gamma_n = 1/(\sqrt{10}\mu(\beta+1)$ where $\beta = 2\lambda+\sqrt{9}$ and $\lambda = 0.003$ for all $n\geq0$, $i\in\{1,2\}$ shown as Figure \ref{fig_graph_peak}. We can detect the peak point by using \textit{findpeaks} in MATLAB to find the local maximum point and lastly we find that the maximum point is presented at 3736 iterations given the ISNR value equal to 8.467. However, we cannot confirm that this is the highest value of  ISNR because if we change our control parameters such as error terms, $\tau_{n}$ $\sigma_{i,n}\; \forall i\in\{1,2\}$ or even the stepsize $\gamma_{n}$, the highest ISNR value may be a different point.

\begin{figure}[h!]
	\centering
  \includegraphics[width=1\textwidth, height=12cm]{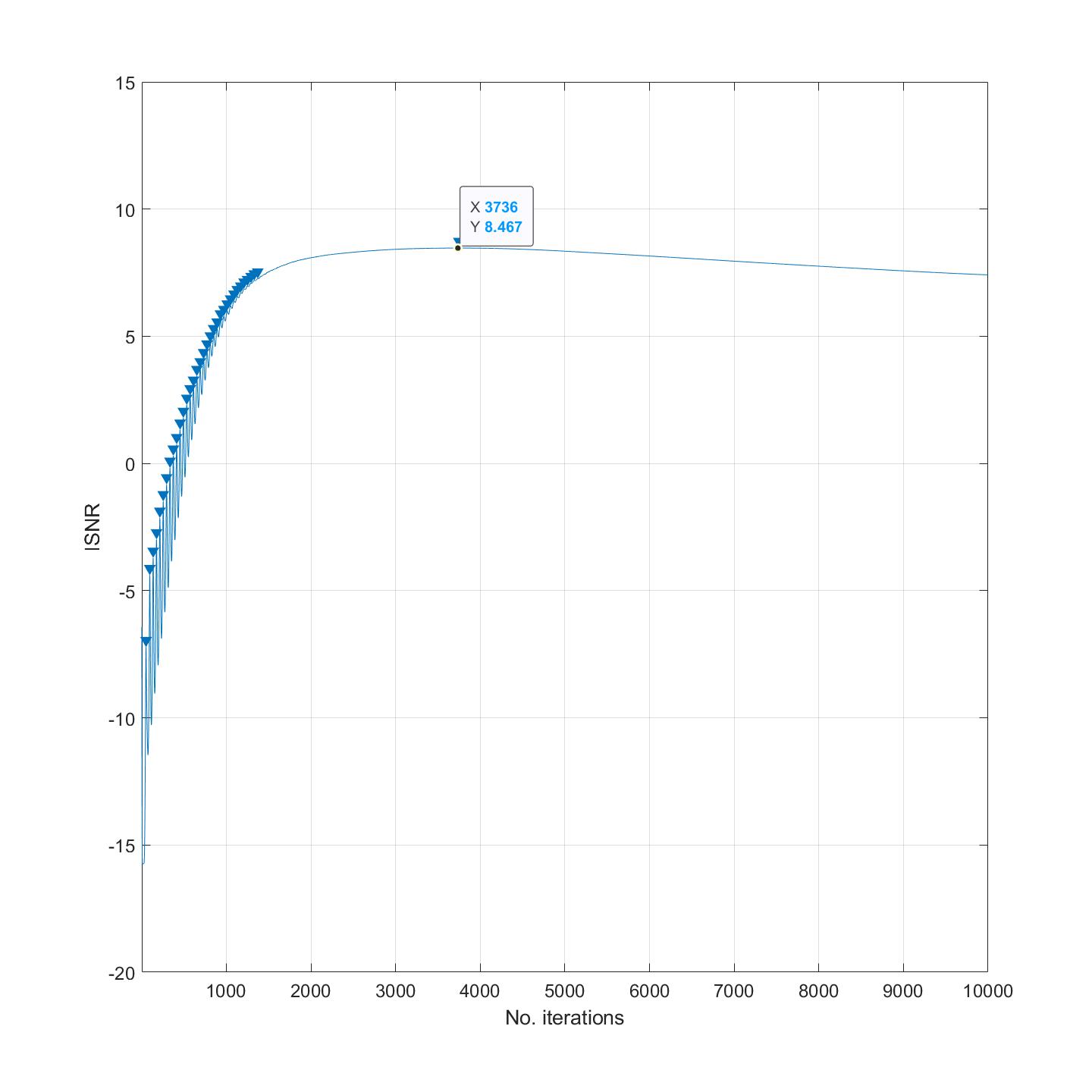}
	\vspace{-.85cm}
	\caption{The graph illustrates the ISNR value after 10000 iterations when we fixed $\tau_n =1$, $\sigma_{i,n}=1$ error terms is  ${1}/{k^2}$  and $\gamma_n = 1/(\sqrt{10}\mu(\beta+1)$ where $\beta = 2\lambda+\sqrt{9}$ and $\lambda = 0.003$ for all $n\geq0$, $i\in\{1,2\}$.}
	\label{fig_graph_peak}
\end{figure}

\noindent\textbf{Acknowledgment.} This work was supported by the Development and Promotion of Science and Technology Talents Project (DPST) scholarship, Thailand Government Scholarships. We are very thankful to Dr. habil. Ernö Robert Csetnek for his careful guidance.


\newpage
\nocite{*}

\begin{thebibliography}{99}

\bibitem{BC-Book}
Bauschke H. H.,  and Combettes P. L. (2011). Convex Analysis and Monotone Operator Theory in Hilbert Spaces. Springer, New York.

\bibitem{FISTA2009}
Beck A. and Teboulle M. (2009), A fast iterative shrinkage-tresholding algorithm for linear inverse problems.
SIAM J. Imaging Sci. 2(1), 183–202.

\bibitem{Aharon.et.al2001}
Ben-Tal A., Margalit T., and Nemirovski A. (2001). The ordered subsets mirror descent optimization method with applications to tomography. SIAM Journal on Optimization 12(1),  79-108.

\bibitem{Bohmetal2020}
Böhm A., Sedlmayer M., Csetnek E. R., and Boţ R. I. (2022). Two Steps at a Time---Taking GAN Training in Stride with Tseng's Method. SIAM Journal on Mathematics of Data Science, 4(2), 750-771.

\bibitem{Bot2015Csetnek}
Boţ R. I., and  Csetnek E. R. (2015). On the convergence rate of a forward-backward type primal-dual splitting algorithm for convex optimization problems. Optimization, 64(1), 5-23.

\bibitem{Bot2010}
Boţ R. I. (2009). Conjugate duality in convex optimization. Vol. 637, Springer Science \& Business Media.

\bibitem{AriasCombettes2011}
Briceño-Arias L. M., and Combettes P. L. (2011). A monotone + skew splitting model for composite monotone
inclusions in duality. SIAM J. Optim. 21(4), 1230–1250.

\bibitem{Burke1999Qian}
Burke J. V., and Qian M. (1999). A variable metric proximal point algorithm for monotone
operators. SIAM J. Control Optim. 37(2), 353–375.

\bibitem{Burke2000Qian}
Burke J. V., and Qian M. (2000). On the superlinear convergence of the variable metric
proximal point algorithm using Broyden and BFGS matrix secant updating. Math. Program.
88(1), 157–181.



		

\bibitem{Chambolle2004}
Chambolle A. (2004). An algorithm for total variation minimization and applications. Journal of Mathematical imaging and vision, 20(1), 89-97.		


\bibitem{Chantas2008Galatsanos}		
Chantas G., Galatsanos N., Likas A., and Saunders M. (2008). Variational bayesian image
restoration based on a product of $t$-distributions image prior. IEEE Trans. Image
Process. 17(10), 1795–1805.

\bibitem{Combettes2012Pesquet}
Combettes P. L., and Pesquet J. C. (2012). Primal-dual splitting algorithm for solving inclusions
with mixtures of composite, Lipschitzian, and parallel-sum type monotone operators. Set-Valued Var. Anal. 20:307–330.

\bibitem{Combettes2001}
Combettes P. L. (2001). Quasi-Fejérian analysis of some optimization algorithms. In Studies in Computational Mathematics (Vol. 8, pp. 115-152). Elsevier.

\bibitem{Combettes2014Vu}
Combettes P. L., and Vũ B. C. (2014). Variable metric forward-backward splitting with applications to monotone inclusions in duality. Optimization, 63(9), 1289-1318. DOI:10.1080/02331934.2012.733883

\bibitem{Variablemetic_quaisi_Combettes2013Vu}
Combettes P. L., and Vũ B. C. (2013). Variable metric quasi-Fejér monotonicity. Nonlinear Anal. 78, 17–31.


\bibitem{Gidel2018etal}
Gidel G., Berard H., Vignoud G., Vincent P., and Lacoste-Julien S. (2018). A variational inequality perspective on generative adversarial networks. arXiv preprint arXiv:1802.10551.


\bibitem{Hiriart-Urruty1993Lemarechal}
Hiriart-Urruty J. B., and  Lemar´echal C. (1993). Convex Analysis and Minimization Algorithms. Springer-Verlag,
New York, NY.

\bibitem{Kato1980}
Kato T. (1980), Perturbation Theory for Linear Operators, 2nd ed. Springer-Verlag, New York.


\bibitem{Malitsky2020Tam}
Malitsky Y., and Tam M.K. (2020). A forward-backward splitting method for monotone inclusions without cocoercivity. SIAM Journal on Optimization, 30(2), 1451-1472.


\bibitem{Parente2008Lotito}
Parente L. A., Lotito P. A., and Solodov M.,V. (2008). A class of inexact variable metric
proximal point algorithms. SIAM J. Optim. 19(1), 240–260.

\bibitem{Polyak1987}
Polyak  B. T. (1987), Introduction to Optimization, Optimization Software Inc., New York.

\bibitem{Popov}
Popov L. D. (1980). A modification of the Arrow-Hurwicz method for search of saddle points. Mathematical notes of the Academy of Sciences of the USSR, 28(5), 845-848.


\bibitem{Rockafellar}
Rockafellar R. (1967). Duality and stability in extremum problems involving convex functions. Pacific Journal of Mathematics 21(1), 167-187.

\bibitem{Rockafellar1976}
Rockafellar R. T. (1976). Monotone operators and the proximal point algorithm. SIAM J. Control Optim. 14(5), 877–898.


\bibitem{Shai et al.2012}
Shalev-Shwartz S. (2012). Online learning and online convex optimization. Foundations and Trends® in Machine Learning, 4(2), 107-194.


\bibitem{Tseng2000}
Tseng P. (2000). A modified forward-backward splitting method for maximal monotone
mappings. SIAM J. Control Optim. 38(2), 431–446.



\bibitem{Vu2013}
Vũ B. C. (2013). A splitting algorithm for dual monotone inclusions involving cocoercive operators. Advances in Computational Mathematics, 38(3), 667-681.

\bibitem{Vu2013Variable}
Vũ B. C., (2013). A variable metric extension of the forward–backward–forward algorithm for monotone operators. Numerical Functional Analysis and Optimization, 34(9), 1050-1065.



		

\end{thebibliography}

\end{document}